\newcommand{\field}[1]{\mathbb{#1}} 
\newcommand{\curl}[1]{\mathcal{#1}} 
\newcommand{\ideal}[1]{\mathfrak{#1}} 
\newcommand{\Z}{\field{Z}} 
\newcommand{\Q}{\field{Q}} 
\newcommand{\Proj}{\field{P}} 
\newcommand{\F}{\field{F}} 
\DeclareMathOperator{\residue}{Res} 
\DeclareMathOperator{\order}{ord} 
\DeclareMathOperator{\resultant}{Res} 
\DeclareMathOperator{\rad}{Rad} 
\DeclareMathOperator{\End}{End} 
\DeclareMathOperator{\GL}{GL} 
\DeclareMathOperator{\Brauer}{Br} 
\DeclareMathOperator{\frob}{Frob} 
\DeclareMathOperator{\tr}{Tr} 
\newcommand{\inset}[2]{\in \left\{ #1 , \ldots , #2 \right\} } 
\theoremstyle{plain}                    
\newtheorem{theorem}{Theorem}[section] 
\newtheorem{lemma}[theorem]{Lemma} 
\newtheorem{proposition}[theorem]{Proposition} 
\newtheorem{corollary}[theorem]{Corollary} 
\theoremstyle{definition}
\theoremstyle{remark}
\newtheorem*{remark}{Remark} 
\begin{document}
\title{On the sum of fourth powers in arithmetic progression}
\author[J.M. van Langen]{Joey M. van Langen}
\address{Department of Mathematics, {V}rije {U}niversiteit {A}msterdam, De Boelelaan 1111, 1081 HV Amsterdam, The Netherlands}
\email{j.m.van.langen@vu.nl}
\thanks{Supported by NWO Vidi grant 639.032.613}
\subjclass[2010]{11D61 (primary class); 11D41, 11F11, 11F80 (secondary classes)}
\keywords{Exponential equation, Frey curve, modularity, Galois representation}

\begin{abstract}
  We prove that the equation~${ (x - y)^4 + x^4 + (x + y)^4 = z^n }$
  has no integer solutions~${ x, y, z}$ with~${ \gcd(x, y) = 1 }$ for
  all integers~${ n > 1 }$. We mainly use a modular approach with two
  Frey~${ \Q }$-curves defined over the field~${ \Q( \sqrt{30} ) }$.
\end{abstract}


\maketitle

\section{Introduction}
\label{sec:intro}
In this paper we will study an equation of the form
\begin{equation}
  \label{eq:threetermform}
  (x-y)^k + x^k + (x+y)^k = z^n, \quad x, y, z \in \Z,
\end{equation}
i.e. the sum of three~${ k }$-th powers in arithmetic progression
being a perfect power. Such equations have been intensively studied in
the case~${ y = 1 }$, i.e. consecutive~${ k }$-th powers. The earliest
results in that case were already formulated by Euler in the
case~${ k = n = 3 }$. Zhang \cite{Zhang} gave a complete solution for
consecutive integers for~${ k = 2, 3, 4 }$ and this was extended by
Bennett, Patel and Siksek \cite{BennettPatelSiksek}
for~${ k = 5, 6 }$. In both cases the modular method was used with
Frey curves defined over the rationals.

Also the more general case of equation~\eqref{eq:threetermform} has
been studied before. For the case~${ k = 2 }$ and~${ \gcd(x, z) = 1 }$
Koutsianas and Patel \cite{KoutsianasPatel} used prime divisors of
Lehmer sequences to determine all solutions
when~${ 1 \le y \le 5000 }$. Koutsianas \cite{Koutsianas} further
studied this case when~${ y }$ is a prime power~${ p^m }$ for specific
prime numbers~${ p }$. The case~${ k = 3 }$ was partially solved by
Arg\'aez-Garc{\'\i}a and Patel \cite{ArgaezGarciaPatel} giving all
solutions in case~${ 1 \le y \le 10^6 }$ using different techniques
including the modular method for some Frey curves over the
rationals. In~\cite{KoutsianasPatel} and \cite{ArgaezGarciaPatel} the
bounds on~${ y }$ are merely for computational purposes, whilst the
techniques would generalize to larger bounds.

Variants of equation~\eqref{eq:threetermform} with more terms on the
left-hand side have also been studied. Recent results include those by
Patel and Siksek~\cite{PatelSiksek} and Patel~\cite{Patel}.

In this paper we look at equation~\eqref{eq:threetermform} for the
case~${ k = 4 }$. Zhang \cite{Zhang2} proved a partial result in this
case. By considering~${ y }$ as a parameter and using the modular
method with Frey curves over~${ \Q }$, he managed to prove the
non-existence of solutions for certain families of values for~${ y
}$. Although his approach could be pushed to include more families of
values for~${ y }$ it appears this method can not be generalized to
treat all values of~${ y }$ simultaneously.

We will give a complete solution for the case~${ k = 4 }$
(where~${ \gcd(x, y) = 1 }$ as always). Essential in the proof is the
construction of two Frey ${ \Q }$-curves defined
over~${ \Q( \sqrt{ 30 } ) }$. Using the modular method on these curves
overcomes the limitations in \cite{Zhang2}, allowing us to prove the
following main result.

\begin{theorem}
  \label{thm:main}
  The sum of three coprime fourth powers in arithmetic progression is
  not a perfect power, i.e. the equation
  \begin{equation}
    \label{eq:main}
    (x - y)^4 + x^4 + (x + y)^4 = z^l
  \end{equation}
  has no solutions~${ x, y, z \in \Z }$ with~${ \gcd (x, y) = 1 }$
  for integers~${ l > 1 }$.
\end{theorem}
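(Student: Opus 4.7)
The plan is to reduce the exponent $l$ to a prime $p$, dispose of a short list of small primes by direct methods, and attack the remaining primes with the modular method using the two announced Frey $\Q$-curves over $K = \Q(\sqrt{30})$. Expanding~\eqref{eq:main} one obtains
\[
3 x^4 + 12 x^2 y^2 + 2 y^4 = z^l,
\]
and by replacing $z$ with an appropriate power we may assume $l = p$ is prime. The binary quadratic form $3 X^2 + 12 X Y + 2 Y^2$ (with $X = x^2$, $Y = y^2$) has discriminant $120 = 4 \cdot 30$, which is why $K = \Q(\sqrt{30})$ enters: after multiplying by $3$ one has
\[
\bigl(3(x^2 + 2 y^2) - \sqrt{30}\, y^2\bigr)\bigl(3(x^2 + 2 y^2) + \sqrt{30}\, y^2\bigr) = 3 z^p
\]
in $\curl{O}_K$, and this factorisation is the source of the two Frey curves.

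Next I would handle the small exponents directly. For $p = 2$, setting $u = x^2$, $v = y^2$ turns the problem into finding points on the conic $3 u^2 + 12 u v + 2 v^2 = w^2$ subject to the constraints that $u$ and $v$ are squares; after parametrising the conic, this reduces to computing integral points on a small family of elliptic curves, or to an elliptic Chabauty argument. The cases $p = 3$ (and, if the modular estimates are too weak, $p = 5$ and $p = 7$) can be treated by an analogous descent on the corresponding superelliptic curve $3 x^4 + 12 x^2 y^2 + 2 y^4 = z^p$, or by a direct $2$-descent over $K$ starting from the factorisation above.

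For the remaining primes the modular strategy proceeds as follows: construct elliptic curves $E_1, E_2$ over $K$ from the two factors, check that each is isogenous over $K$ to its $\galois(K/\Q)$-conjugate (so that both are $\Q$-curves), compute their conductors at the primes of $K$ above $2$, $3$, $5$, and apply modularity of $\Q$-curves (Ellenberg–Skinner, building on Ribet) to attach to each $E_i$ a classical weight-two newform via the Weil restriction. A Ribet-style level-lowering argument then reduces, for every sufficiently large $p$, to a finite list of candidate newforms at an explicit small level that is independent of $p$.

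The main obstacle is the final elimination step. For each surviving newform $f$ and several primes $q$ of good reduction one must rule out the congruence $a_q(f) \equiv a_q(E_i) \pmod{\ideal{p}}$ by comparing it against the possible values of $a_q(E_i)$ obtained by varying $(x, y) \bmod q$; experience with related equations (including~\cite{Zhang2}) suggests that a single Frey curve typically leaves genuine residual possibilities, and the point of constructing two Frey $\Q$-curves is precisely that the pair of local trace conditions at a single prime $q$ admits no common solution. In parallel one has to verify, uniformly in $p$, that the residual mod-$\ideal{p}$ representation of each $E_i$ is absolutely irreducible, typically by inertia arguments at primes of multiplicative reduction combined with an analysis of isogenies of $\Q$-curves over $K$. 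Carrying out this joint elimination for every newform at the lowered level, and controlling irreducibility uniformly in $p$, is where I expect the bulk of the technical work to lie.
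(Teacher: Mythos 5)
Your plan matches the paper's approach in all essentials: the same field $K = \Q(\sqrt{30})$, the same factorisation of $3x^4 + 12x^2y^2 + 2y^4$, two Frey $\Q$-curves built from the two factors (with the $2$-isogeny to the Galois conjugate living over $K(\sqrt{-2})$, not over $K$ itself), modularity of $\Q$-curves followed by level lowering to explicit classical levels independent of $p$, and — crucially — the multi-Frey elimination; your intuition that a single curve would leave residual cases is exactly what happens, as the paper cannot dispose of $l = 7$ using only one Frey curve. The small exponents, however, are lighter than your sketch suggests and your $p=2$ step as stated would misfire: the conic $w^2 = 3u^2 + 12uv + 2v^2$ has no $\Q_3$-points at all (reduce mod $9$ using $\gcd(a,b)=1$), so there is nothing to parametrise and the case is a one-line local obstruction, not an integral-point or elliptic-Chabauty problem. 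The paper needs direct work only for $p = 3, 5$ (the modular method already handles $p = 7$), via genus-$2$ and genus-$4$ hyperelliptic quotients whose Jacobians mostly have rank $0$; one curve at $p = 3$ has rank bound $1$ and requires a further descent over $\Q(\sqrt{-10})$, and the $p = 5$ case must be set up over the degree-$4$ field $\Q(v)$ rather than over $K$ to avoid positive-rank Jacobians.
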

Note that any solution to equation~\eqref{eq:main} gives rise to a
solution for~${ l }$ a prime number. For our proof it thus suffices to
prove Theorem~\ref{thm:main} for~${ l }$ prime as we shall do
throughout this paper.

As mentioned, the construction of two Frey curves over the
field~${ \Q( \sqrt{30} ) }$ will be essential in the proof. The
construction of these curves can be found in
Section~\ref{sec:FreyCurves}.

Since~${ \Q( \sqrt{30} ) }$ is a real quadratic field the most direct
approach to apply the modular method is to use Hilbert modularity of
curves defined over real quadratic fields. We will perform the initial
steps to this approach in Section~\ref{sec:hilbert}. However we will
also argue that the computation of the corresponding spaces of Hilbert
modular forms is out of reach for the current computational power,
making this approach unfeasible.

Instead we will use that the curves in this paper are by construction
also~${ \Q }$-curves for which a separate modularity result is known
\cite{Ribet}. A~${ \Q }$-curve approach to solving Diophantine
equations has already been used in articles such as the ones by
Ellenberg \cite{Ellenberg}, Dieulefait and Freitas
\cite{DieulefaitFreitas}, Dieulefait and Urroz \cite{DieulefaitUrroz},
Chen \cite{Chen1,Chen}, Bennett and Chen \cite{BennettChen}, and
Bennett, Chen, Dahmen and Yazdani \cite{BennettChenDahmenYazdani}. We
will discuss this approach in Section~\ref{sec:Qcurves}.

As in the mentioned articles we will follow \cite{Quer} for general
results about~${ \Q }$-curves. The main differences lie in that the
restrictions of scalars of our curves are not abelian varieties
of~${ \GL_2 }$-type themselves, but will decompose as a product of
such varieties. This also happens in \cite{DieulefaitFreitas} and
\cite{Chen1}. In the first this issue is dealt with by studying the
relation between the corresponding Galois representations. We will
rather study the relation between the corresponding newforms as was
done in \cite{Chen1} and will be more specific about computing the
character that defines this relation.

Another difference is in the way we compute the elliptic curve of
which one should take the restriction of scalars. Most mentioned
articles simply refer to \cite{Quer2} to prove the existence of a
twist of the original curve that will suffice and perform a small
search to find this twist. In \cite{BennettChen} a more direct
approach is given in case one can find a
map~${ \alpha: G_\Q \to \curl{O}_{K}^{*} }$ with a certain
coboundary. We show that also in our case we can find such a map if
the sought twist exists.

Furthermore the approach we took should generalize to other
Frey~${ \Q }$-curves and is mostly algorithmic. Therefore the author
has written code~\cite{code} for SAGE~\cite{sage} that automates the
generic parts of this approach. This code includes SAGE code to work
with Frey curves, ${\Q}$-curves and Frey~${ \Q }$-curves as well as
the associated newforms. A reasonable effort has been made to make the
code work for general such curves and provide sufficient
documentation. Furthermore the file \texttt{calculations.rst} in
\cite{code} provides a structured overview of all the computations
done for this paper interjected with explanatory notes. This document
describes all intermediate steps needed for the calculations in this
paper such that one can easily reproduce the results. Furthermore the
computations in this file can be verified automatically using SAGE's
automated doctest system. Some computations in this file also make use
of MAGMA~\cite{magma}. The code~\cite{code} also provides support for
working with MAGMA when computing newforms to decrease computation
time.

Since the modular method approach in this setting only works for
primes~${ l > 5 }$, Section~\ref{sec:small_l} is dedicated to proving
the cases for small~${ l }$. The case~${ l = 2 }$ follows immediately
from a local obstruction, whereas the cases~${ l = 3 }$
and~${ l = 5 }$ require the computation of some points on
hyperelliptic curves to prove the non-existence of solutions.

Section~\ref{sec:pre} introduces some preliminary results about
equation~\eqref{eq:main} and introduces some notation that will be
used throughout the paper.

\subsubsection*{Acknowledgments}
Many thanks go to my supervisor Sander Dahmen, who initiated this
project, suggested the two Frey curves, and helped at many points
along the way. Thanks to the referee for their helpful comments to
improve the exposition.


\section{Preliminaries}
\label{sec:pre}

In this section we will prove some general results about integer
solutions to equation~\eqref{eq:main} with~${ \gcd(x, y) = 1
}$. Throughout the paper~${ (a, b, c) }$ will denote an arbitrary such
solution. Note that we have
\begin{equation}
  \label{eq:main2}
  c^l = (a - b)^4 + a^4 + (a + b)^4 = 3 \, a^4 + 12 \, a^2 b^2 + 2 b^4,
\end{equation}
which leads to the following result.
\begin{proposition}
  \label{thm:c_not_div}
  The integer~${ c }$ is not divisible by~${ 2 }$,~${ 3 }$ or~${ 5 }$,
  hence~${ a }$ is odd and~${ b }$ is not divisible by~${ 3 }$.
\end{proposition}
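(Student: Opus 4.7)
The plan is to treat each of the three primes $p \in \{2, 3, 5\}$ separately, using equation~\eqref{eq:main2} together with either a congruence modulo $p$ or a $p$-adic valuation argument to force a contradiction with $l \geq 2$ (which we may assume, since $l$ is prime).

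For $p = 2$: since $\gcd(a, b) = 1$, at most one of $a, b$ can be even. If $a$ were even (and hence $b$ odd), reducing~\eqref{eq:main2} modulo $4$ would give $c^l \equiv 2 b^4 \equiv 2 \pmod{4}$, so that $v_2(c^l) = 1$, contradicting $l \geq 2$. Hence $a$ must be odd, and a further reduction modulo $2$ then shows that the right-hand side of~\eqref{eq:main2} is odd, so $c$ is odd as well.

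For $p = 3$ a $3$-adic valuation argument of the same flavour works. Reducing~\eqref{eq:main2} modulo $3$ yields $c^l \equiv 2 b^4 \pmod{3}$, so $3 \mid c$ forces $3 \mid b$, and coprimality then forces $3 \nmid a$. In that case $v_3(3 a^4) = 1$, while $v_3(12 a^2 b^2) \geq 3$ and $v_3(2 b^4) \geq 4$, so $v_3(c^l) = 1$, again contradicting $l \geq 2$. Hence $3 \nmid c$, and consequently $3 \nmid b$.

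For $p = 5$ only a congruence modulo $5$ is needed. By Fermat's little theorem, $a^4 \equiv 1 \pmod{5}$ whenever $5 \nmid a$, and similarly for $b$. Assuming $5 \mid c$ and splitting on whether $5$ divides $a$, $b$, or neither, the right-hand side of~\eqref{eq:main2} reduces modulo $5$ to $2 b^4$, $3 a^4$, or $3 + 2 a^2 b^2 + 2 \equiv 2 a^2 b^2 \pmod{5}$, respectively; using that $\gcd(a, b) = 1$ and that nonzero squares modulo $5$ lie in $\{1, 4\}$, each of these is nonzero modulo $5$, a contradiction. Hence $5 \nmid c$. No real obstacle is anticipated: the whole argument is elementary, the only points requiring a little care being the interplay of coprimality with the $p$-adic valuations in the cases $p = 2, 3$.
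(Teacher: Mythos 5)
Your proof is correct and takes essentially the same route as the paper, which simply reduces equation~\eqref{eq:main2} modulo $4$, $9$, and $5$; your $3$-adic valuation argument is just the mod-$9$ reduction in disguise, and the case splits for $p=2$ and $p=5$ fill in details the paper leaves to the reader.
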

\begin{proof}
  This follows immediately by considering equation~\eqref{eq:main2}
  modulo~${ 4 }$,~${ 9 }$ and~${ 5 }$.
\end{proof}

Let~${ f(x, y) }$ be the left-hand side of
equation~\eqref{eq:main}. The most general factorization of~${ f }$ is
obtained by factoring over the splitting field~${ L }$
of~${ f(x, 1) }$.  Since~${ f(x, 1) }$ is irreducible, the
polynomial~${ f(x, y) }$ factors as a product of a constant and Galois
conjugates of
\begin{equation}
  \label{eq:hdef}
  h( x, y ) = x + v y,
\end{equation}
where~${ v }$ is a root of~${ f(x, 1) }$. To be precise we have
\begin{equation}
  \label{eq:factorL}
  f(x, y) = 3 \, (x + v y) \, (x - v y) \, (x + \sqrt{(-v^2 - 4)} \, y ) \, (x
  - \sqrt{(-v^2 - 4)} \, y). 
\end{equation}
We can say a lot about the factor~${ h(a, b) }$ and its Galois conjugates.

\begin{lemma}
  \label{thm:factor_coprime}
  The distinct Galois conjugates of~${ h(a, b) }$ are coprime outside
  primes above~${ 3 }$. Furthermore, the valuation of~${ h(a,b) }$ at
  all primes above~${ 2 }$ and~${ 5 }$ is zero and its valuation at
  the unique prime~${ \ideal{p}_3 }$ in~${ L }$ above~${ 3 }$
  is~${ -1 }$.
\end{lemma}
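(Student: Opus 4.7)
The plan is to analyse the valuations of the four Galois conjugates $h_i = a + \alpha_i b$ (with $\alpha_i \in \{\pm v, \pm w\}$ and $w = \sqrt{-v^2 - 4}$) of $h(a,b)$ locally at each prime of $L$, using the factorisation $c^l = 3\, h_1 h_2 h_3 h_4$ from~\eqref{eq:factorL} together with $\gcd(a,b) = 1$ and Proposition~\ref{thm:c_not_div}. A first observation is that $3v$ satisfies the monic integer polynomial $X^4 + 36 X^2 + 54$ (obtained from $3v^2 = -6 \pm \sqrt{30}$), and similarly $3w$ is an algebraic integer; hence each $3 h_i \in \curl{O}_L$ and $\prod_i (3 h_i) = 27 c^l$.

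The crux of the argument is determining the decomposition of $3$ in $L$. I would first show that the Newton polygon of $X^4 + 36X^2 + 54$ at $3$ is the single segment from $(0,3)$ to $(4,0)$ of slope $-3/4$; since the denominator of the slope equals its horizontal length, the polynomial is irreducible over $\Q_3$, so there is a unique prime $\ideal{q}$ of $\Q(v)$ above $3$ with $e(\ideal{q}/3) = 4$ and $v_\ideal{q}(v) = -1$. For the quadratic extension $L = \Q(v)(w)$, dividing $w^2 = -v^2 - 4$ by $v^2$ yields $(w/v)^2 = -1 - 4 v^{-2}$, which reduces to the non-square $-1 \in \F_3$ modulo the uniformizer $v^{-1}$; hence $L_\ideal{q}/\Q(v)_\ideal{q}$ is the unramified quadratic extension, so there is a unique prime $\ideal{p}_3$ of $L$ above $3$, with $e(\ideal{p}_3/3) = 4$, $f(\ideal{p}_3/3) = 2$ and $v_{\ideal{p}_3}(v) = v_{\ideal{p}_3}(w) = -1$. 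Since $3 \nmid b$ by Proposition~\ref{thm:c_not_div}, one has $v_{\ideal{p}_3}(\alpha_i b) = -1 < 0 \le v_{\ideal{p}_3}(a)$, and the ultrametric inequality gives $v_{\ideal{p}_3}(h_i) = -1$ for every~$i$.

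For a prime $\ideal{p}$ of $L$ above $2$ or $5$, Proposition~\ref{thm:c_not_div} gives $v_\ideal{p}(27 c^l) = 0$; combined with $3 h_i \in \curl{O}_L$ and $\sum_i v_\ideal{p}(3 h_i) = 0$, every term vanishes, so $v_\ideal{p}(h_i) = 0$, and the $h_i$ are pairwise coprime at $\ideal{p}$. For a prime $\ideal{p}$ not above $2$, $3$ or $5$, if $\ideal{p} \mid h_i, h_j$ with $i \neq j$ then $\ideal{p}$ divides both $h_i - h_j = (\alpha_i - \alpha_j) b$ and $\alpha_j h_i - \alpha_i h_j = (\alpha_j - \alpha_i) a$, so $\gcd(a,b) = 1$ forces $\ideal{p} \mid \alpha_i - \alpha_j$; however the differences $\alpha_i - \alpha_j$ are (up to sign) $2v$, $2w$, $v - w$ and $v + w$, and the identities $v^2 w^2 = 2/3$ together with $(v - w)(v + w) = 2(v^2 + 2) = \pm 2\sqrt{30}/3$ force each of these to be a $\ideal{p}$-unit at such a prime, a contradiction.

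The principal obstacle is the second paragraph: correctly diagnosing the decomposition of $3$ in $L$. Once one has shown that $3$ is totally ramified in $\Q(v)$ and that the resulting prime is inert in $L/\Q(v)$, the valuation $v_{\ideal{p}_3}(h(a,b)) = -1$ drops out from $\gcd(a,b) = 1$ and $3 \nmid b$, and the remaining two claims reduce to routine bookkeeping with the factorisation $c^l = 3 \prod_i h_i$.
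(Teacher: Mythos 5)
Your proposal is correct and reaches the same conclusions, but it is more self-contained than the paper's argument: where the paper delegates to SAGE the determination of the splitting of~$3$ in~$L$, the value~$\order_{\ideal{p}_3}(v)=-1$, and the set of primes dividing the differences of conjugates of~$v$, you establish these by hand. Your Newton-polygon analysis of~$X^4+36X^2+54$ (slope~$-3/4$ over the whole segment, hence irreducible, totally ramified with~$v_\ideal{q}(3v)=3$) correctly gives~$e(\ideal{q}/3)=4$ and~$v_\ideal{q}(v)=-1$ in~$\Q(v)$; the reduction~$(w/v)^2\equiv -1\pmod{\ideal{q}}$ with~$-1$ a non-square in~$\F_3$ correctly shows~$L/\Q(v)$ is inert at~$\ideal{q}$ (and, incidentally, that~$w\notin\Q(v)$, so~$[L:\Q]=8$); and the identities~$v^2w^2=2/3$ and~$(v-w)(v+w)=\pm2\sqrt{30}/3$ are precisely what is needed to see that all differences~$\alpha_i-\alpha_j$ are units away from~$2,3,5$. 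There is also a small structural difference in the last step: the paper obtains~$\order_{\ideal{p}_3}(h(a,b))=-1$ by pairing the lower bound~$\ge -1$ on each conjugate against the total valuation~$-4$ of~$c^l/3$ forced by~\eqref{eq:factorL}, whereas you get it directly from the ultrametric inequality once~$3\nmid b$ is invoked; both are correct, and yours does not need the product formula at~$\ideal{p}_3$ at all. Overall the two proofs share the same skeleton (integrality of the~$3h_i$, coprimality via differences, bookkeeping with~$c^l=3\prod h_i$), but yours buys a computer-free verification at the cost of spelling out the local analysis at~$3$ explicitly.
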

\begin{proof}
  Since any field that contains~${ h(a, b) }$ and its Galois
  conjugates contains~${ L }$ we can safely do all computations
  in~${ L }$. Note that~${ a }$ and~${ b }$ are integers and
  that~${ v }$ is only not integral at the unique
  prime~${ \ideal{p}_3 }$ above~${ 3 }$, so the only prime at
  which~${ h(a, b) }$ and its Galois conjugates are not integral
  is~${ \ideal{p}_3 }$. For two distinct Galois
  conjugates~${ \prescript{\sigma}{}{(h(a, b))} }$
  and~${ \prescript{\tau}{}{(h(a,b))} }$ their difference is equal
  to~${ \left( \prescript{\sigma}{}{v} - \prescript{\tau}{}{v} \right)
    b }$. Any prime dividing both can not divide~${ b }$ as it then
  also divides~${ a }$ contradicting their coprimality. Therefore the
  only common primes are in the
  differences~${ \prescript{\sigma}{}{v} - \prescript{\tau}{}{v}
  }$. Using SAGE~\cite{sage} we can determine that the only primes
  dividing these difference are those above~${ 2 }$,~${ 3 }$
  and~${ 5 }$, hence we arrive at the first conclusion.

  For the second statement, we note that~${ a }$ and~${ b }$ are
  integral and~${ v }$ only has negative valuation
  at~${ \ideal{p}_3 }$ with~${ \order_{\ideal{p}_3}(v) = -1 }$. This
  implies that the valuation of~${ h(a,b) }$ and its conjugates is at
  least~${ 0 }$ at all primes above~${ 2 }$ and~${ 5 }$ and at
  least~${ -1 }$ at~${ \ideal{p}_3 }$. Applying this information to
  equation~\eqref{eq:factorL} and using that~${ c }$ has
  valuation~${ 0 }$ at all these primes by
  Proposition~\ref{thm:c_not_div}, the second result immediately
  follows.
\end{proof}

Lemma~\ref{thm:factor_coprime} and equation~\eqref{eq:factorL} tell us that
\begin{equation*}
  (h(a,b)) = \ideal{p}_3^{-1} I^l
\end{equation*}
for some integral ideal~${ I }$ of~${ L }$. We will need this general
result to solve the case~${ l = 5 }$.

For the other cases, we can limit ourselves to the
subfield~${ K = \Q(\sqrt{30}) }$ of~${ L }$. In this case we have two
factors
\begin{IEEEeqnarray}{rCl}
  \label{eq:gdef}
  g_1(x, y) & := & x^2 + \left(2 + \frac{1}{3}\sqrt{30}\right)y^2 \\
  g_2(x, y) & := & x^2 + \left(2 - \frac{1}{3}\sqrt{30}\right)y^2,
\end{IEEEeqnarray}
and the factorization is
\begin{equation}
  \label{eq:factorK}
  z^l = f(x, y) = 3 \, g_1(x, y) \, g_2(x, y).
\end{equation}
Note that~${ g_1 }$ and~${ g_2 }$ are both the product of two Galois
conjugates of~${ h }$ and since these are all distinct we can conclude
that~${ g_1(a,b) }$ and~${ g_2(a,b) }$ are coprime outside primes
above~${ 3 }$. Also the result about valuations carries over, so both
have valuation~${ 0 }$ at primes above~${ 2 }$ and~${ 5 }$ and since
the unique prime~${ \ideal{q}_3 }$ of~${ K }$ above~${ 3 }$ factors
as~${ \ideal{p}_3^2 }$ in~${ L }$ they have valuation~${ -1 }$
at~${ \ideal{q}_3 }$. Furthermore we find that
\begin{IEEEeqnarray*}{rCl}
  (g_1(a,b)) & = & \ideal{q}_3^{-1} I_1^l \\
  (g_2(a,b)) & = & \ideal{q}_3^{-1} I_2^l,
\end{IEEEeqnarray*}
with~${ I_1 }$ and~${ I_2 }$ coprime integral ideals of~${ K }$.

Throughout this paper~${ K }$ and~${ L }$ will be the same as in this
section, as will~${ g_1 }$,~${ g_2 }$ and~${ h }$.

\section{Cases for small ${ l }$}
\label{sec:small_l}
In this section we will solve equation~\eqref{eq:main} for small prime
exponents~${ l }$ as the modular method used in the next sections only
works for~${ l > 5 }$. All small cases have a slightly different
approach.

\subsection{Case~${ l = 2 }$}
In case~${ l = 2 }$ equation~\eqref{eq:main} has a local obstruction
at~${ 3 }$. This can be seen by considering equation~\eqref{eq:main2}
modulo 9, or by considering the equation modulo~${ 3 }$ and using that
${ 3 \nmid c }$ from Proposition~\ref{thm:c_not_div}. This proves the
non-existence of solutions in this case. A similar obstruction can be
found modulo~${ 5 }$.

\subsection{Case ${l = 3}$}
\label{sec:l3}
Suppose~${ (a, b, c) }$ is a solution to equation~\eqref{eq:main}
for~${ l = 3 }$ and assume that~${ \gcd(a, b) = 1 }$. From
Section~\ref{sec:pre} we know that
\begin{equation*}
  \left( g_1(a, b) \right) = \ideal{q}_3^{-1} I_1^3 = \ideal{q}_3^{-4} \left( \ideal{q}_3 I_1 \right)^3
\end{equation*}
as fractional ideals in~${ K }$. Since~${ K }$ has class
number~${ 2 }$ and~${ \ideal{q}_3^{-4} = \left( \frac{1}{9} \right) }$
we find that~${ \ideal{q}_3 I_1 }$ must be a principal ideal. Hence we
conclude that
\begin{equation*}
  g_1(a, b) = \frac{1}{9} u \gamma^3,
\end{equation*}
for~${ u \in \curl{O}_{K}^* }$ and~${ \gamma \in \ideal{q}_3 }$. Note
that~${ \curl{O}_K^* }$ is generated by~${ u_0 = (-1)^3 }$ and an
element~${ u_1 }$ of infinite order, hence we can
take~${ u = u_1^{j} }$ for~${ j = 0, 1, 2 }$.
Since~${ \{ 3, 6 + \sqrt{30} \} }$ is an integral basis of
of~${ \ideal{q}_3 }$ we can parametrize~${ \gamma }$ with integral
parameters~${ s }$ and~${ t }$ to get that
\begin{IEEEeqnarray}{rCl}
  \gamma & = & 3 s + \left( 6 + \sqrt{30} \right) t =  3 g_1(\sqrt{s}, \sqrt{t}) \nonumber \\
  a^2 & = & F_{3, j}(s, t) \label{eq:param3} \\
  b^2 & = & G_{3, j}(s, t) \nonumber \\
  c & = & 3 \, s^2 + 12 \, s t + 2 t^2 \nonumber
\end{IEEEeqnarray}
for~${ F_{3, j}(s, t) }$ and~${ G_3(s, t) }$ some homogeneous
polynomials over~${ \Q }$ of degree~${ 3 }$.

Note that~${ t = 0 }$ corresponds to solutions in which~${ c }$ would
be divisible by~${ 3 }$. So by Proposition~\ref{thm:c_not_div} we find
that~${ t \neq 0 }$. By multiplying the middle two equations in
\eqref{eq:param3} and dividing by~${ t^6 }$ we find hyperelliptic
curves~${ C_j }$ in the variables~${ X = \frac{s}{t} }$
and~${ Y = \frac{ a b }{ t^3 } }$. Explicitly these curves are given
by
\begin{IEEEeqnarray*}{rCrCl}
  C_0 & : & Y^2 & = & 27 \,X^{5} + 108 \,X^{4} + 84 \,X^{3} - 288 \,X^{2} - 564 \,X - 368 \\
  C_1 & : & Y^2 & = & -1\,242\,X^6 - 1\,269\,X^5 - 432\,X^4 + 84\,X^3 + 72\,X^2 + 12\,X \\
  C_2 & : & Y^2 & = & -599\,940\,X^6 - 627\,237\,X^5 - 273\,132\,X^4 - 63\,276\,X^3 \\
  &&&& \quad - 8\,208\,X^2 - 564\,X - 16.
\end{IEEEeqnarray*}
As~${ t \neq 0 }$ every solution~${ (a, b, c) }$ corresponds to a
point on such a curve. Using MAGMA~\cite{magma} we see that the
curve~${ C_2 }$ has no solution in~${ \Q_3 }$, hence none in~${ \Q
}$. Also using MAGMA we can compute that the Jacobian of~${ C_0 }$ has
only two-torsion points as rational points. Note that such points
correspond to factors of the defining polynomial, i.e.
of~${ F_{3, 0}(s, t) G_{3, 0}(s, t) }$. Since the only linear factor
in~${ F_{3, 0} G_{3, 0} }$ is~${ t }$, the only rational point
on~${ C_0 }$ corresponds to the case~${ t = 0 }$ which we already
excluded from corresponding to a solution.

The curve~${ C_1 }$ has no local obstruction. Furthermore the rank of
its Jacobian is bounded above by 1 and its L-function suggests the
rank is 1. However no point of infinite order on the Jacobian can be
found within a small bound. We shall therefore apply a different
approach.

For the case~${ j = 1 }$ the equations in \eqref{eq:param3} become
explicitly
\begin{IEEEeqnarray*}{rCl}
  a^2 & = & - 3 \, s \left( 23 \, s^2 + 12 \, s t + 2 \, t^2 \right) \\
  b^2 & = & 18 \, s^3 + 9 \, s^2 t - 2 \, t^3 \\
  c & = & 3 \, s^2 + 12 \, s t + 2 \, t^2.
\end{IEEEeqnarray*}
Note that in the first equation~${ 23 \, s^2 + 12 \, s t + 2 \, t^2 }$
is congruent to~${ c }$ modulo~${ 20 }$. By
Proposition~\ref{thm:c_not_div} this
implies~${ 23 \, s^2 + 12 \, s t + 2 \, t^2 }$ is not divisible
by~${ 2 }$ or~${ 5 }$. Note that~${ s }$ and~${ t }$ must be coprime
since~${ a }$ and~${ b }$ are coprime, hence~${ s }$
and~${ 23 \, s^2 + 12 \, s t + 2 \, t^2 }$ must be coprime
outside~${ 2 }$. Since~${ 2 }$ does not
divide~${ 23 \, s^2 + 12 \, s t + 2 \, t^2 }$ the two must be coprime
and we find that
\begin{IEEEeqnarray*}{rCl}
  a & = & 3 \, a_1 \, a_2 \\
  s & = & (-1)^{e_1} \, 3^{e_2} \, a_1^2 \\
  23 \, s^2 + 12 \, s t + 2 \, t^2 & = & (-1)^{1 - e_1} \, 3^{1 - e_2} \, a_2^2,
\end{IEEEeqnarray*}
with~${ e_1, e_2 \in \{ 0, 1 \} }$. Now note that
\begin{equation*}
  23 \, s^2 + 12 \, s t + 2 \, t^2
  = 2 \left( \beta s + t \right) \left( \overline{\beta} s + t \right)
  = 2 N_{\Q}^{\Q(\sqrt{-10})} \left( \beta s + t \right),
\end{equation*}
where~${ \beta = 3 + \sqrt{10} / 2 }$,~${ \overline{\beta} }$ is its
Galois conjugate and~${ N_{\Q}^{\Q( \sqrt{-10} ) } }$ is the norm of
the field~${ \Q( \sqrt{-10} ) }$. Since~${ \Q( \sqrt{-10} ) }$ is an
imaginary field its norm is positive, hence~${ e_1 = 1 }$. Furthermore
the unique prime above~${ 3 }$ in~${ \Q( \sqrt{-10} ) }$ has
norm~${ 9 }$, so~${ e_2 = 1 }$. We thus have that
\begin{equation*}
  2 \left( \beta s + t \right) \left( \overline{\beta} s + t \right) = a_2^2.
\end{equation*}
Note that the factors~${ \beta s + t }$
and~${ \overline{\beta} s + t }$ are coprime outside primes dividing
the difference~${ \beta - \overline{\beta} = \sqrt{-10} }$.
Since~${ a_2 }$ is an integer not divisible by~${ 2 }$ and~${ 5 }$
which ramify in~${ \Q( \sqrt{-10} ) }$ these factors must have
valuation~${ -1 }$ at the unique prime~${ \ideal{p}_2 }$ above 2 and
valuation~${ 0 }$ at the unique prime above 5. This implies that
\begin{equation*}
  \left( \beta s + t \right) = \ideal{p}_2^{-1} I^2
\end{equation*}
for some ideal~${ I }$ of~${ \curl{O}_{\Q(\sqrt{-10})} }$. Since the
class number of~${ \Q(\sqrt{-10}) }$ is~${ 2 }$ this would imply
that~${ \ideal{p}_2 }$ is principal, which is not the case. Therefore
no solution can correspond to the case~${ j = 1 }$ and thereby no
solution to equation~\eqref{eq:main} with~${ \gcd( x, y ) = 1 }$
exists for~${ l = 3 }$.

\begin{remark}
  Geometrically, equations~\eqref{eq:param3} define a curve with a
  degree 4 map to~${ \Proj^1 }$. The hyperelliptic curves we
  constructed in these sections are quotients of these curves through
  which this degree 4 map factors as two degree 2 maps. The only other
  geometric quotients with this same property are defined by taking
  the equation for~${ a^2 }$ and setting~${ t = 1 }$ or taking the
  equation for~${ b^2 }$ and setting~${ t = 1 }$. The quotient we
  considered is the only quotient for which rational points correspond
  to rational solutions of the corresponding equations, making this
  the natural choice. The same will be true for the
  equations~\eqref{eq:param5} in the next section.
\end{remark}


\subsection{Case ${ l = 5 }$}
\label{sec:l5}

Now suppose we have a solution~${ (a, b, c) }$ to
equation~\eqref{eq:main} with~${ l = 5 }$ and~${ \gcd(a, b) =
  1}$. Recall from Section~\ref{sec:pre} that
\begin{equation*}
  (h(a,b)) = \ideal{p}_3^{-1} I^5
  = \ideal{p}_3^4 \left(\ideal{p}_3^{-1} I\right)^5
\end{equation*}
as fractional ideals in~${ L }$.
Since~${ \ideal{p}_3^4 = (3) }$ and~${ 5 }$ does not divide the order
of the class group of~${ L }$ we find that~${ \ideal{p}_3^{-1} I }$
must be a principal ideal, hence
\begin{equation*}
  h(a,b) = 3 \, u \, \gamma^5
\end{equation*}
for some unit~${ u \in \curl{O}_L^* }$ and~${ \gamma \in L
}$. Furthermore the valuation of~${ \gamma }$ is only negative
at~${ \ideal{p}_3 }$ where it is~${ -1 }$,
hence~${ \gamma \in \frac{1}{3} \curl{O}_L }$. Note that in these
arguments we may also replace~${ L }$ with~${ \Q(v) }$. The
field~${ \Q(v) }$ is a number field of degree~${ 4 }$ and
hence~${ \curl{O}_{\Q(v)} }$ can be parameterized by four integer
coefficients. Using this description we obtain for each choice
of~${ u }$ a parameterization of~${ a }$ and~${ b }$ together with two
equations in the four indeterminates. Since~${ \curl{O}_{\Q(v)}^* }$
is generated by~${ u_0 = (-1)^5 }$ and an element~${ u_1 }$ of
infinite order it is sufficient to consider~${ u = u_1^i }$
for~${ i \inset{0}{4} }$.  Considering the equations obtained for each
of these~${ i }$ modulo 5, we see that only two of them can
parameterize coprime~${ a }$ and~${ b }$, leaving only the
cases~${ i = 0, 4 }$.

Without loss of generality we may assume that~${ g_1 }$ is the product
of~${ h }$ with~${ \prescript{\sigma}{}{h} }$, where~${ \sigma }$ is the
automorphism on~${ \Q(v) }$ mapping~${ v }$ to~${ -v }$. This implies
that we have
\begin{equation*}
  g_1(a, b) = 9 \left(u \prescript{\sigma}{}{u}\right)
  \left(\gamma \prescript{\sigma}{}{\gamma}\right)^5.
\end{equation*}
Since~${ \Q(v^2) = \Q(\sqrt{30}) = K }$ we find
that~${ u' := u \prescript{\sigma}{}{u} \in \curl{O}_K^* }$
and~${ \gamma' := \gamma \prescript{\sigma}{}{\gamma} \in K }$. Note
that again~${ \gamma' }$ is only not integral at~${ \ideal{q}_3 }$ and
furthermore~${ \order_{\ideal{q}_3}(\gamma) = -1 }$, so we have
that~${ \gamma' \in \ideal{q}_3^{-1} = \left( \frac{1}{3} \right)
  \ideal{q}_3 }$. From the case~${ l = 3 }$ we know
that~${ \ideal{q}_3 }$ has an integral basis formed by~${ 3 }$ times
the coefficients of~${ g_1 }$, hence~${ \ideal{q}_3^{-1} }$ has an
integral basis formed by the coefficients of~${ g_1 }$. In particular
it has an integral basis of the
form~${ \left\{ 1 , \frac{\sqrt{30}}{3} \right\} }$ which gives us a
parameterization of the form
\begin{IEEEeqnarray}{rCl}
  \gamma' & = & s + \frac{\sqrt{30}}{3} t \nonumber \\
  a^2 & = & F_{5, u'} (s, t) \nonumber \\
  b^2 & = & G_{5, u'} (s, t), \label{eq:param5} \\
  c & = & 3 \, s^{2} - 10 \, t^{2} \nonumber
\end{IEEEeqnarray}
for each choice of unit~${ u' }$. Here~${ F_{5, u'} }$ and~${ G_{5, u'}
}$ are homogeneous polynomials over~${ \Q }$ of degree~${ 5 }$.

The only remaining cases are~${ u' = 1 \prescript{\sigma}{}{1} = 1 }$
and~${ u' = u_1^4 \prescript{\sigma}{}{u_1^4} = u_1^8 }$. As in the
case~${ l = 3 }$ we can see that~${ t \ne 0 }$ and hence we can
construct hyperelliptic curves by multiplying the equations
for~${ a^2 }$ and~${ b^2 }$ and dividing the result by~${ t^{10}
}$. These give us the hyperelliptic curves
\begin{IEEEeqnarray*}{rCrCl}
  C_1 & : & Y^2
  & = & 405 \,X^{9}
  - 4\,050 \,X^{8}
  + 16\,200 \,X^{7}
  - 54\,000 \,X^{6}
  + 113\,400 \,X^{5} \\ &&&& \quad
  - 198\,000 \,X^{4} 
  + 180\,000 \,X^{3}
  - 120\,000 \,X^{2}
  + 50\,000 \,X
  - 20\,000 \\
  C_{u_1^8} & : & Y^2
  & = & -3\,083\,903\,014\,930\,297\,409\,520 \,X^{10} \\ &&&& \quad
  - 56\,304\,108\,214\,517\,165\,808\,555 \,X^{9} \\ &&&& \quad
  - 462\,585\,452\,239\,544\,611\,432\,050 \,X^{8} \\ &&&& \quad
  - 2\,252\,164\,328\,580\,686\,632\,342\,200 \,X^{7} \\ &&&& \quad
  - 7\,195\,773\,701\,504\,027\,288\,934\,000 \,X^{6} \\ &&&& \quad
  - 15\,765\,150\,300\,064\,806\,426\,395\,400 \,X^{5} \\ &&&& \quad
  - 23\,985\,912\,338\,346\,757\,629\,798\,000 \,X^{4} \\ &&&& \quad
  - 25\,024\,048\,095\,340\,962\,581\,580\,000 \,X^{3} \\ &&&& \quad
  - 17\,132\,794\,527\,390\,541\,164\,120\,000 \,X^{2} \\ &&&& \quad
  - 6\,951\,124\,470\,928\,045\,161\,550\,000 \,X \\ &&&& \quad
  - 1\,269\,095\,890\,917\,817\,864\,020\,000
\end{IEEEeqnarray*}
in the variables~${ X = \frac{s}{t} }$ and~${ Y = \frac{a b}{t^5}
}$. Studying the Jacobians of these curves in MAGMA~\cite{magma} we
find that both Jacobians only contain two-torsion points. Since the
polynomials~${ F_{5, 1} G_{5, 1} }$
and~${ F_{5, u_1^8} G_{5, u_1^8} }$ only contain one linear factor we
conclude as in the case~${ l = 3 }$ that both curves only have one
rational point. These points are a point at infinity for~${ C_1 }$
corresponding to~${ t = 0 }$, and the point~${ (- \frac{42}{23}, 0) }$
on~${ C_2 }$. Note that these rational points correspond to values
for~${ s }$ and~${ t }$ for which either~${ 2 \mid c }$
or~${ 3 \mid c }$ which is impossible by
Proposition~\ref{thm:c_not_div}. This proves that no
solution~${ (a, b, c) }$ to equation~\eqref{eq:main}
with~${ \gcd(a, b) = 1 }$ and~${ l = 5 }$ can exist.

\begin{remark}
  It is necessary to first look at the factorization in~${ \Q(v) }$,
  since some of the hyperelliptic curves that come from units we have
  not considered over~${ K }$ have Jacobians with a rank bound that is
  not zero. Furthermore these curves also don't have a local
  obstruction.
\end{remark}



\section{The Frey curves}
\label{sec:FreyCurves}
In this section we construct Frey curves for our problem. A Frey curve
is an elliptic curve that depends on the solution~${ (a, b, c) }$,
which has a mod~${ l }$ Galois representation that is finite flat
at~${ l }$ and unramified at all other primes outside some fixed set
of primes~${ S }$. The set~${ S }$ should be independent of the chosen
solution~${ (a, b, c) }$. Such curves can be realized by a curve which
has a minimal discriminant that is an~${ l }$-th power outside~${ S }$
and which has additive reduction only at primes in~${ S }$.

For our cases we construct such curves using the following fact. Given
two elements~${ B_1 }$ and~${ B_2 }$ of a field~${ k }$ of which their
sum is a square, i.e.~${ B_1 + B_2 = A^2 }$, we can look at the
elliptic curve

\begin{equation*}
  E : y^2 = x^3 + 2 \, A x^2 + B_1 x
\end{equation*}
defined over~${ k }$, for which this model has
discriminant~${ \Delta = 64 \, B_1^2 B_2 }$. Furthermore this curve
can only have additive reduction at primes above~${ 2 }$ and primes
that divide both~${ B_1 }$ and~${ B_2 }$. This is easily verified by
looking at the invariant~${ c_4 = 16 \left( B_1 + 4 \, B_2 \right) }$,
which is coprime to~${ \Delta }$ outside such primes.

Note that this recipe will give us a Frey curve if~${ B_1 }$
and~${ B_2 }$ are coprime~${ l }$-th powers outside the fixed
set~${ S }$. In fact this is the same Frey curve considered for the
generalized Fermat equation with signature~${ (l, l, 2) }$.

Now over the field~${ K = \Q( \sqrt{30} ) }$ we know that we have two
factors~${ g_1(a, b) }$ and~${ g_2(a, b) }$ which are
coprime~${ l }$-th powers outside the set of primes above~${ 2
}$,~${ 3 }$ and~${ 5 }$. Furthermore we have that

\begin{IEEEeqnarray*}{rCrCl}
  \left( \frac{1}{2} - \frac{1}{10} \sqrt{30} \right) g_1(a, b) & + &
  \left( \frac{1}{2} + \frac{1}{10} \sqrt{30} \right) g_2(a, b) & = &
  a^2 \\
  \frac{1}{20} \sqrt{30} \, g_1(a, b) & - &
  \frac{1}{20} \sqrt{30} \, g_2(a, b) & = & b^2,
\end{IEEEeqnarray*}
hence we can apply the construction given above substituting
for~${ B_1 }$ and~${ B_2 }$ the right multiples of~${ g_1(a, b) }$
and~${ g_2(a, b) }$. We will construct the Frey curves we will use
from the four resulting Frey curves
\begin{IEEEeqnarray*}{rCrCrCrCrl}
  E_1' & : & y^2 & = & x^3 & + & 2 \, a x^2 & + & \left( \frac{1}{2} - \frac{1}{10} \sqrt{30} \right) g_1(a, b) x & , \\
  E_1'' & : & y^2 & = & x^3 & + & 2 \, a x^2 & + & \left( \frac{1}{2} + \frac{1}{10} \sqrt{30} \right) g_2(a, b) x & , \\
  E_2' & : & y^2 & = & x^3 & + & 2 \, b x^2 & + & \frac{1}{20} \sqrt{30} \, g_1(a, b) x & \text{, and} \\
  E_2'' & : & y^2 & = & x^3 & + & 2 \, b x^2 & - & \frac{1}{20} \sqrt{30} \, g_2(a, b) x &.
\end{IEEEeqnarray*}
Note that~${ E_1''}$ and~${ E_2'' }$ are Galois conjugates
of~${ E_1 }$ and~${ E_2 }$ respectively, so it suffices to consider
only one of each pair. We pick~${ E_1'' }$ and~${ E_2' }$ and twist
these curves by~${ 30 }$ and~${ 20 }$ respectively to obtain two Frey
curves with an integral model
\begin{IEEEeqnarray*}{rCrCrCrCrl}
  E_1 & : & y^2 & = & x^3 & + & 60 \, a x^2 & + & 30 \left( \left( 15 + 3 \sqrt{30} \right) a^2 + \sqrt{30} \,  b^2 \right) x & \text{, and} \\
  E_2 & : & y^2 & = & x^3 & + & 40 \, b x^2 & + & 20 \left( \sqrt{30} \, a^2 + \left( 10 + 2 \sqrt{30} \right) b^2 \right) x &.
\end{IEEEeqnarray*}
These models have respective discriminants
\begin{IEEEeqnarray*}{rCl}
  \Delta_1 & = & - 2^{9} \cdot 3^{6} \cdot 5^{4} \left( 5 + \sqrt{30} \right) \cdot g_1(a, b) \cdot  g_2(a, b)^2 \text{ and} \\
  \Delta_2 & = & - 2^{13} \cdot 3 \cdot 5^{4} \, \sqrt{30} \cdot g_1(a,b)^2 \cdot g_2(a, b),
\end{IEEEeqnarray*}
${ c_4 }$-invariants
\begin{IEEEeqnarray*}{rCl}
  c_{4,1} & = & - 2^{5} \cdot 3^{2} \cdot 5 \cdot \left( 5 + \sqrt{30} \right) \cdot \left( \left(43 - 8 \, \sqrt{30} \right) a^2 + \left(6 - \sqrt{30}\right) b^2 \right) \text{ and} \\
  c_{4,2} & = & - 2^{6} \cdot 3^{-1} \cdot 5 \cdot \sqrt{30} \cdot \left( 9 a^{2} + \left(18 - 5 \sqrt{30} \right) b^{2} \right),
\end{IEEEeqnarray*}
and~${ j }$-invariants
\begin{IEEEeqnarray*}{rCl}
  j_1(a, b) & = & \left( 11 + 2 \, \sqrt{30} \right) \cdot
  2^6 \cdot \frac{ \left( \left(43 - 8 \, \sqrt{30} \right) a^2 + \left(6 - \sqrt{30}\right) b^2 \right)^3 }{ g_1(a,b) \cdot g_2(a, b)^2 } \text{ and}\\
  j_2(a, b) & = & 2^{6} \cdot 3^{-3} \cdot \frac{ \left( 9 a^{2} + \left(18 - 5 \sqrt{30} \right) b^{2} \right)^{3} }{ g_1(a,b)^2 \cdot g_2(a,b) } .
\end{IEEEeqnarray*}

The~${ j }$-invariants of these elliptic curves are not integral. We
will prove this here as we will need this later on. In particular this
implies that these curves do not have complex multiplication.

\begin{lemma}
  \label{thm:j_integral}
  The ${ j }$-invariants~${ j_1(a, b) }$ and~${ j_2(a, b) }$ are not
  integral. Furthermore there exists a prime of
  characteristic~${ > 5 }$ such that~${ j_1( a, b) }$
  and~${ j_2(a, b) }$ are not integral at that prime.
\end{lemma}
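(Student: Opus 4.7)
My plan is to exhibit a single prime $\ideal{p}$ of $K$ of residue characteristic greater than $5$ at which both $j_1(a,b)$ and $j_2(a,b)$ have strictly negative valuation; this establishes both sentences of the lemma at once.

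First I would show $|c| \geq 7$. Since $\gcd(a,b)=1$, the pair $(a,b)$ is nonzero, so equation~\eqref{eq:main2} gives $c^l = 3a^4 + 12a^2b^2 + 2b^4 > 0$; the possibility $|c|=1$ would force $3a^4 + 12a^2b^2 + 2b^4 = 1$, which visibly has no integer solutions, so $|c|\geq 2$. Combined with $\gcd(c,30)=1$ from Proposition~\ref{thm:c_not_div}, this forces $|c|\geq 7$, so I may pick a rational prime $p \geq 7$ dividing $c$ and a prime $\ideal{p}$ of $K$ above $p$.

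By the factorization~\eqref{eq:factorK} we have $\ideal{p} \mid g_1(a,b)\,g_2(a,b)$, and the coprimality conclusion of Section~\ref{sec:pre} (a consequence of Lemma~\ref{thm:factor_coprime}) ensures that $\ideal{p}$ divides exactly one of the two factors; say $m := \order_{\ideal{p}}(g_1(a,b)) \geq 1$ and $\order_{\ideal{p}}(g_2(a,b)) = 0$ (the symmetric case is handled identically). The elements $5+\sqrt{30}$, $\sqrt{30}$, and $11+2\sqrt{30}$ have norms $-5$, $-30$, and $1$ respectively, so all three are units at $\ideal{p}$, and reading off the explicit $\Delta_i$ gives $\order_{\ideal{p}}(\Delta_1) = m$ and $\order_{\ideal{p}}(\Delta_2) = 2m$. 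The Frey-curve recipe of Section~\ref{sec:FreyCurves} shows that $E_1$ and $E_2$ can have additive reduction only at primes above $2,3,5$, so at $\ideal{p}$ the reduction of each $E_i$ must be multiplicative, which is equivalent to $\order_{\ideal{p}}(c_{4,i}) = 0$. From $j_i = c_{4,i}^3/\Delta_i$ I then read off $\order_{\ideal{p}}(j_1) = -m < 0$ and $\order_{\ideal{p}}(j_2) = -2m < 0$, completing the argument.

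The only point that needs some care is the claim $\order_{\ideal{p}}(c_{4,i}) = 0$. The cleanest justification is the standard criterion that at a prime $\ideal{p}$ dividing $\Delta$ with residue characteristic coprime to $6$, the reduction is multiplicative if and only if $\ideal{p}\nmid c_4$; the Frey-curve construction rules out additive reduction at such $\ideal{p}$, so multiplicative reduction is forced and $\ideal{p}\nmid c_{4,i}$. Alternatively, one can verify this directly from the explicit expressions for $c_{4,i}$ by checking that $\ideal{p}$ cannot simultaneously divide $g_1(a,b)$ and the bracketed quadratic in $a^2, b^2$ occurring in $c_{4,i}$, using the coprimality data of Lemma~\ref{thm:factor_coprime}.
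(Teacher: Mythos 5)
Your argument is correct and reaches the same conclusion by a more structural route than the paper's. The paper proves the lemma by a direct resultant computation: it observes that the numerators of $j_1$ and $j_2$ contain the quadratic forms $(43 - 8\sqrt{30})a^2 + (6-\sqrt{30})b^2$ and $9a^2 + (18-5\sqrt{30})b^2$ respectively, and checks by computing resultants with $g_1$ and $g_2$ that these forms are coprime with $g_1(a,b)$ and $g_2(a,b)$ outside residue characteristics $2$, $3$, $5$; combined with the existence of a prime $p>5$ dividing $c$ (which the paper also derives from $c>1$ and Proposition~\ref{thm:c_not_div}), the denominator $g_1 g_2^2$ or $g_1^2 g_2$ of $j_i$ then has a prime factor not cancelled by the numerator. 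You instead invoke the general Frey-curve fact from Section~\ref{sec:FreyCurves} that additive reduction can occur only at primes dividing both $B_1$ and $B_2$ (hence here only above $2$, $3$, $5$) to conclude that the reduction at $\ideal{p}$ is multiplicative, from which $\order_{\ideal{p}}(j_i)<0$ follows. Both arguments rest on the same underlying fact, namely $\order_{\ideal{p}}(c_{4,i})=0$ whenever $\ideal{p}\mid g_1(a,b) g_2(a,b)$ with residue characteristic $p>5$; the paper's version makes this an explicit polynomial check, while yours is a conceptual reduction-type argument, and you correctly identify the resultant check as the alternative. One small detail worth spelling out: the generic Frey-curve claim in Section~\ref{sec:FreyCurves} is stated for the untwisted model $y^2=x^3+2Ax^2+B_1x$, whereas $E_1$ and $E_2$ are twists by $30$ and $20$. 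This is harmless: the $j$-invariant is twist-invariant, so $j_1=j(E_1'')$ and $j_2=j(E_2')$ and you could run the argument on the untwisted models directly; alternatively, $30$ and $20$ are units at any $\ideal{p}$ of residue characteristic $>5$, so passing to the twisted model does not change the valuations of $c_4$ or $\Delta$ there. Either observation closes this small gap.
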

\begin{proof}
  Note that since~${ \gcd(a, b) = 1 }$ the left-hand side of
  equation~\eqref{eq:main} is the sum of at least two non-zero fourth
  powers, hence~${ c > 1 }$. By Proposition~\ref{thm:c_not_div} there
  must be a prime number~${ p > 5 }$ dividing~${ c }$. This implies
  that either~${ g_1(a, b) }$ or~${ g_2(a, b) }$ is divisible by a
  prime above~${ p }$. It thus suffices to prove that the numerators
  of~${ j_1(a, b) }$ and~${ j_2(a, b) }$ are not divisible by the same
  prime.

  Note that the factors
  \begin{equation*}
    \left(43 - 8 \, \sqrt{30} \right) a^2 + \left(6 - \sqrt{30}\right) b^2,
  \end{equation*}
  and
  \begin{equation*}
    9 \, a^{2} + \left( 18 - 5 \, \sqrt{30} \right) b^{2},
  \end{equation*}
  in the numerators of~${ j_1(a, b) }$ and~${ j_2(a, b) }$ are coprime
  with~${ g_1(a, b) }$ and~${ g_2(a, b) }$ outside primes of
  characteristic~${ 2 }$,~${ 3 }$ and~${ 5 }$. This can be easily seen
  by computing the resultants of those polynomials with~${ g_1 }$
  and~${ g_2 }$.
\end{proof}
\begin{corollary}
  \label{thm:non_CM}
  The curves~${ E_1 }$ and~${ E_2 }$ do not have complex
  multiplication.
\end{corollary}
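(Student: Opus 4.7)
The plan is to invoke a classical theorem from the theory of complex multiplication: if an elliptic curve $E$ over a number field has complex multiplication, then its $j$-invariant is an algebraic integer. This is a standard result (see, e.g., Silverman's \emph{Advanced Topics in the Arithmetic of Elliptic Curves}, Theorem~II.6.1), and it is exactly the obstruction we have already verified in Lemma~\ref{thm:j_integral}.

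With this theorem in hand, the corollary is an immediate consequence. I would phrase the proof as a contrapositive: assume for contradiction that one of $E_1$ or $E_2$ has complex multiplication. Then its $j$-invariant would be an algebraic integer, contradicting the conclusion of Lemma~\ref{thm:j_integral} that both $j_1(a,b)$ and $j_2(a,b)$ fail to be integral (in fact fail to be integral already at some prime of residue characteristic $> 5$, which is stronger than what is needed here).

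There is no real obstacle; the content of the corollary has already been done in Lemma~\ref{thm:j_integral}. The only minor choice is whether to cite the integrality theorem explicitly or to simply invoke it as a well-known fact. Since the lemma is stated with the stronger conclusion about primes of characteristic $> 5$, it is worth remarking that this extra strength (which was required for the proof of the lemma via the fact that $c > 1$ must have a prime divisor $> 5$) is not strictly needed for the corollary, but will be useful later when applying the modular method at primes $l > 5$.
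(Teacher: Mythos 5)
Your proposal is correct and matches the paper's proof exactly: the paper also cites Silverman's \emph{Advanced Topics}, Theorem~II.6.1, that CM implies an integral $j$-invariant, and then invokes Lemma~\ref{thm:j_integral}. Your added remark that the stronger ``characteristic $>5$'' conclusion of the lemma is not needed here but will be used later is a fair observation, though the paper does not make it at this point.
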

\begin{proof}
  This follows directly from \cite[II, Theorem~6.1]{Silverman2} as
  the~${ j }$-invariants are not integral.
\end{proof}


\section{A Hilbert modular approach}
\label{sec:hilbert}
A natural way of using the Frey curves would be to use the modularity
of elliptic curves over real quadratic fields to prove that there are
Hilbert modular forms which have the same mod~${ l }$ Galois
representation as~${ E_1 }$ or~${ E_2 }$. The level of these newforms
will only depend on certain congruence classes of the chosen solution
and hence all possible candidates can be explicitly computed. It turns
out that the dimension of the corresponding spaces is too high to
perform these computations in a reasonable time. Nevertheless we here
describe the start of this approach.

We first need to compute the conductor of~${ E_1 }$ and~${ E_2 }$ as
the level of the Hilbert modular forms associated to them depends
on it.
\begin{proposition}
  \label{thm:conductorK}
  The conductor of~${ E_1 }$ is
  \begin{equation*}{}
    \curl{N}_1 =
    \begin{cases}
      \ideal{p}_2^{12} \, \ideal{p}_3^{2} \, \ideal{p}_5^{2} \rad_{30} \left( g_1(a, b) g_2(a, b)^2 \right) &
      \text{if } 2 \mid b \\
      \ideal{p}_2^{10} \, \ideal{p}_3^{2} \, \ideal{p}_5^{2} \rad_{30} \left( g_1(a, b) g_2(a, b)^2 \right) &
      \text{if } 2 \nmid b,
    \end{cases}
  \end{equation*}
  and the conductor of~${ E_2 }$ is
  \begin{equation*}
    \curl{N}_2 = \ideal{p}_2^{14} \ideal{p}_5^{2} \rad_{30} \left( g_1(a, b)^2 g_2(a, b) \right),
  \end{equation*}
  where~${ \ideal{p}_2 }$,~${ \ideal{p}_3 }$ and~${ \ideal{p}_5 }$ are
  the unique primes above 2, 3 and 5 respectively and
  where~${ \rad_{30}( N ) }$ is the product of all primes that
  divide~${ N }$ and do not divide~${ 30 }$.
\end{proposition}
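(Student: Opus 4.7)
The plan is to run Tate's algorithm locally at every prime of bad reduction of the explicit models $E_1$ and $E_2$ given in Section~\ref{sec:FreyCurves}. Writing $\curl{N}_i = \prod_{\ideal{p}} \ideal{p}^{f_{\ideal{p}}(E_i)}$, it suffices to identify all primes $\ideal{p}$ with $f_{\ideal{p}}(E_i) > 0$ and compute each exponent.

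First I would deal with primes $\ideal{p}$ of $K$ not above $2$, $3$, or $5$. By the coprimality part of Lemma~\ref{thm:factor_coprime} (transferred from $L$ to $K$ as in Section~\ref{sec:pre}), any such $\ideal{p}$ dividing either $\Delta_1$ or $\Delta_2$ divides exactly one of $g_1(a,b)$, $g_2(a,b)$. The resultant computation behind Lemma~\ref{thm:j_integral} further shows that $c_{4,i}$ does not vanish modulo such $\ideal{p}$, so both $E_1$ and $E_2$ have multiplicative reduction there with conductor exponent $1$. Taking the product over all such $\ideal{p}$ yields the factor $\rad_{30}(g_1(a,b) g_2(a,b)^2)$ in $\curl{N}_1$ and $\rad_{30}(g_1(a,b)^2 g_2(a,b))$ in $\curl{N}_2$.

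Next I would turn to $\ideal{p}_2$, $\ideal{p}_3$, and $\ideal{p}_5$. A direct valuation calculation on the given formulas for $\Delta_i$ and $c_{4,i}$ shows that for $E_1$ the reduction is additive at all three primes, while for $E_2$ the powers of $3$ and $\sqrt{30}$ in $\Delta_2$ exactly cancel the pole of $g_1(a,b)^2 g_2(a,b)$ at $\ideal{p}_3$, giving good reduction there and explaining the absence of $\ideal{p}_3$ from $\curl{N}_2$. By Proposition~\ref{thm:c_not_div}, the classes of $(a,b)$ modulo these primes lie in a bounded set ($a$ odd, $3 \nmid b$), so only finitely many situations arise. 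At $\ideal{p}_5$ the residue characteristic exceeds $3$, hence the conductor exponent is automatically $2$ for both curves. At $\ideal{p}_3$ (only relevant for $E_1$), wild ramification could a priori contribute since the residue characteristic is $3$, but Tate's algorithm applied to the allowed congruence classes shows the wild part vanishes and the exponent is again $2$.

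The main obstacle is the computation at $\ideal{p}_2$. Here the residue characteristic is $2$, the given models are not $\ideal{p}_2$-minimal, and the conductor exponent contains a non-trivial wild contribution that depends on $b \pmod{2}$ (recall $a$ is always odd). I would carry out Tate's algorithm separately in the cases $2 \mid b$ and $2 \nmid b$, tracking the explicit admissible changes of variable and the Kodaira types produced at each step. This should give $f_{\ideal{p}_2}(E_1) = 12$ when $2 \mid b$, $f_{\ideal{p}_2}(E_1) = 10$ when $2 \nmid b$, and $f_{\ideal{p}_2}(E_2) = 14$ in both cases. These computations are routine but lengthy, and can be verified automatically using the SAGE code~\cite{code} accompanying the paper.
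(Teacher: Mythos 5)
Your proposal is correct and takes essentially the same approach as the paper: the paper's own proof is a one-paragraph deferral to Tate's algorithm as implemented in the accompanying SAGE code, which runs the algorithm by distinguishing congruence classes of the parameters (Hensel-lifted at each step). Your sketch fleshes this out by explicitly separating the easy primes (multiplicative reduction giving the radical factor, via coprimality and the resultant computation from Lemma~\ref{thm:j_integral}; good reduction of $E_2$ at $\ideal{p}_3$ by the valuation cancellation; tame exponent $2$ at $\ideal{p}_5$) from the genuinely computational ones ($\ideal{p}_3$ for $E_1$ and especially $\ideal{p}_2$ with its wild contribution depending on $b \bmod 2$), and then deferring the latter to the same code, which matches the paper's treatment.
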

\begin{proof}
  This is a computation performed by the code~\cite{code} of the
  author, which makes use of Tate's algorithm. This algorithm is
  implemented for Frey curves by distinguishing for each step in
  Tate's algorithm the congruence classes of the parameters for which
  the algorithm would continue and those for which it would
  stop. These congruence classes can be calculated using Hensel
  lifting as the termination conditions in Tate's algorithm are based
  on certain polynomials in the parameters being zero modulo a
  sufficiently high power of the corresponding prime.
\end{proof}

It has been proven by Freitas, Le Hung and
Siksek~\cite{RealQuadraticModularity} that elliptic curves over real
quadratic fields are modular. In particular the curves~${ E_1 }$
and~${ E_2 }$ are modular. According to~\cite{RealQuadraticModularity}
this means there are Hilbert cuspidal eigenforms~${ f_1 }$
and~${ f_2 }$ over~${ K = \Q( \sqrt{30} ) }$ of parallel weight 2
with rational Hecke eigenvalues such that for all prime
numbers~${ p }$ we have
\begin{equation*}
  \rho_{ E_i, p } \cong \rho_{f_i, p} : G_K \to \GL_2( \Q_p ), \quad i \in \{ 1, 2 \}.
\end{equation*}
Here~${ \rho_{ E_i, p } }$ is the~${ p }$-adic Galois representation
of~${ E_i }$ induced by the Galois action on the Tate
module~${ T_p( E ) }$ and~${ \rho_{ f_i, p } }$ is the~${ p }$-adic
Galois representation associated to~${ f_i }$ by Carayol, Blasius,
Rogawski, Wiles and Taylor.

Note that the conductor of~${ \rho_{ E_i, p } }$ is precisely the
conductor of~${ E_i }$ and the conductor
of~${ \rho_{ f_i, p } }$ is precisely the level of~${ f_i
}$. Therefore we know from Proposition~\ref{thm:conductorK}
that~${ f_1 }$ and~${ f_2 }$ have respective levels~${ \curl{N}_1 }$
and~${ \curl{N}_2 }$.

The levels~${ \curl{N}_1 }$ and~${ \curl{N}_2 }$ are not explicit as
they depend on the chosen solution~${ (a, b, c) }$. However if we
take~${ p = l }$ and look at the mod~${ l }$ Galois
representations~${ \overline{\rho}_{E_i, l} : G_K \to \End( E[l] )
  \cong \GL_2( \F_l ) }$, rather than the~${ p }$-adic representation,
we find that these representations are irreducible. Furthermore they
are finite at all primes not dividing~${ 30 }$. This allows us to
lower the level to a level only divisible by those primes
dividing~${ 30 }$. We prove that the representation is finite here,
but irreducibility needs more results later on and will be proven in
Theorem~\ref{thm:irreducible}.

\begin{proposition}
  \label{thm:unramifiedOutside30}
  The mod~${ l }$ Galois representations
  \begin{equation*}
    \overline{\rho}_{E_i, l} : G_K \to \End( E[l] ) \cong \GL_2( \F_l )
  \end{equation*}
  are finite outside all primes dividing~${ 30 }$. In particular
  they are unramified outside all primes dividing~${ 30 \, l }$.
\end{proposition}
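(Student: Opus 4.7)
The plan is to combine Proposition~\ref{thm:conductorK} with the standard Tate curve criterion and the factorisations of $g_1(a,b)$ and $g_2(a,b)$ from Section~\ref{sec:pre}. By Proposition~\ref{thm:conductorK} the conductor exponent of~${E_i}$ at any prime~${\ideal{p}}$ of~${K}$ not dividing~${30}$ is at most one, so at such~${\ideal{p}}$ the curve~${E_i}$ has either good or multiplicative reduction. In the good case, ${E_i[l]}$ extends to a finite flat group scheme over~${\curl{O}_{K_{\ideal{p}}}}$, so~${\overline{\rho}_{E_i,l}|_{D_{\ideal{p}}}}$ is automatically finite; only the multiplicative primes away from~${30}$ need further work.

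At such a prime I would invoke the standard consequence of Tate's uniformisation: the local mod~${l}$ representation is finite (in the finite flat sense when~${\ideal{p}\mid l}$, equivalently unramified when~${\ideal{p}\nmid l}$) if and only if~${l}$ divides~${v_{\ideal{p}}(\Delta_{i,\min}) = -v_{\ideal{p}}(j_i)}$. Thus the problem reduces to checking that ${l \mid v_{\ideal{p}}(j_i)}$ for every~${\ideal{p}\nmid 30}$ at which~${j_i}$ has negative valuation.

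The computation of~${v_{\ideal{p}}(j_i)}$ proceeds from the explicit formulas in Section~\ref{sec:FreyCurves}. The proof of Lemma~\ref{thm:j_integral} already shows that the cubic numerator factor of~${j_i}$ is coprime to~${g_1(a,b)}$ and~${g_2(a,b)}$ outside primes of characteristic~${2}$,~${3}$,~${5}$, and the scalar prefactors~${(11+2\sqrt{30})\cdot 2^6}$ and~${2^6 \cdot 3^{-3}}$ are supported on primes above~${30}$ (note that~${11+2\sqrt{30}}$ has norm~${1}$). Consequently, for~${\ideal{p}\nmid 30}$,
\begin{equation*}
  v_{\ideal{p}}(j_1) = -v_{\ideal{p}}\bigl(g_1(a,b)\,g_2(a,b)^2\bigr),\qquad
  v_{\ideal{p}}(j_2) = -v_{\ideal{p}}\bigl(g_1(a,b)^2\,g_2(a,b)\bigr).
\end{equation*}
By Section~\ref{sec:pre}, ${(g_j(a,b)) = \ideal{q}_3^{-1} I_j^l}$ with~${I_j}$ integral, so at any~${\ideal{p}}$ not lying above~${3}$ we have~${v_{\ideal{p}}(g_j(a,b)) \in l\,\Z}$, giving~${l \mid v_{\ideal{p}}(j_i)}$ as required. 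The ``in particular'' clause is immediate, since at a prime~${\ideal{p}\nmid l}$ a finite representation is unramified.

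The main obstacle I anticipate is the careful invocation of the Tate curve criterion in the case~${\ideal{p}\mid l}$, where ``finite'' must be interpreted in the finite flat sense rather than merely as unramifiedness; this is standard but warrants an explicit reference. The remainder is a routine valuation computation once the coprimality observations from the proof of Lemma~\ref{thm:j_integral} are in hand.
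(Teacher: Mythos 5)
Your proof is correct and follows essentially the same route as the paper: reduce to primes $\ideal{p}\nmid 30$ using Proposition~\ref{thm:conductorK}, dispose of good-reduction primes, and at multiplicative primes invoke the Tate-curve (or finite-flat) criterion, with the $l$-divisibility of the relevant valuation coming ultimately from $(g_j(a,b)) = \ideal{q}_3^{-1}I_j^l$. The one cosmetic difference is that the paper argues directly with the model discriminant $\Delta_i$ (observing its $\ideal{p}$-valuation is a multiple of~$l$ because $\ideal{p}$ divides one of the $g_j(a,b)$, which is an $l$-th power outside~$30$), whereas you detour through $v_{\ideal{p}}(j_i)=-v_{\ideal{p}}(\Delta_{i,\min})$ and therefore need the additional coprimality of the cubic numerator of $j_i$ with the $g_j(a,b)$ from the proof of Lemma~\ref{thm:j_integral}; this is harmless, and arguably more robustly model-independent, but it is slightly more work than necessary since $c_4$ and $\Delta_i$ are coprime at primes $\ideal{p}\nmid 30$, so the given model is already minimal there and the discriminant argument applies directly.
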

\begin{proof}
  Note that for each finite prime~${ \ideal{p} }$ of~${ K }$ that does
  not divide~${ 30 }$ the order of~${ \ideal{p} }$ in~${ \curl{N}_1 }$
  or~${ \curl{N}_2 }$ is at most one. In case it is zero the curve has
  good reduction at~${ \ideal{ p } }$, hence the mod~${ l }$ Galois
  representation is finite at~${ \ideal{ p } }$. We are left with the
  case the order is one, in which case~${ \ideal{p} }$ must
  divide~${ g_1(a, b) }$ or~${ g_2(a, b) }$. Since~${ g_1(a, b) }$
  and~${ g_2(a, b) }$ are~${ l }$-th powers outside primes
  dividing~${ 30 }$, this implies that the order of~${ \ideal{ p } }$
  in the corresponding discriminant~${ \Delta_i }$ is a multiple
  of~${ l }$. Since the corresponding curve~${ E_i }$ has
  multiplicative reduction at~${ \ideal{ p } }$ the mod~${ l }$ Galois
  representation is finite at~${ \ideal{ p } }$. This is a standard
  result that can be easily proved using the Tate curve
  if~${ \ideal{p} \nmid l }$.
\end{proof}

Using the level lowering result found in~\cite[Theorem 7]{realLL}
for~${ l > 5 }$ we now find that there must be Hilbert cuspidal
eigenforms~${ f_1' }$ and~${ f_2' }$ over~${ K }$ of
parallel weight~2 such
that~${ \overline{\rho}_{ E_i, l } \cong \overline{\rho}_{ f_i',
    \lambda } }$ for~${ i \in \{ 1, 2 \} }$,
where~${ \lambda \mid l }$ is a prime in the coefficient field
of~${ f_i' }$. Here the levels of these newforms are respectively
\begin{IEEEeqnarray*}{rCl}
  \widetilde{\curl{N}}_1 & = &
  \begin{cases}
    \ideal{p}_2^{12} \, \ideal{p}_3^{2} \, \ideal{p}_5^{2}
    &
    \text{if } 2 \mid b \\
    \ideal{p}_2^{10} \, \ideal{p}_3^{2} \, \ideal{p}_5^{2} &
    \text{if } 2 \nmid b
  \end{cases} \\
  \widetilde{\curl{N}}_2 & = & \ideal{p}_2^{14} \ideal{p}_5^{2}.
\end{IEEEeqnarray*}

The strategy would now be to compute all cuspidal Hecke eigenforms
over~${ K }$ of parallel weight~2 and
levels~${ \widetilde{\curl{N}}_1 }$ and~${ \widetilde{\curl{N}}_2 }$
and prove that none of them can have a mod~${ \lambda \mid l }$
representation isomorphic to the mod~${ l }$ representation of the
corresponding~${ E_i }$. This would prove a contradiction, hence the
implicit assumption that a primitive solution~${ (a, b, c) }$ to
equation~\eqref{eq:main} would be false as we want.

\begin{remark}
  Besides working with the curves~${ E_1 }$ and~${ E_2 }$ one might
  also want to work with curves that are isomorphic
  over~${ \overline{\Q } }$. In case we do not want to change the
  field~${ K }$ over which the curves are defined, these isomorphic
  curves would be twists of our original curves. Note that twisting by
  an element~${ \gamma \in K^* }$ can only change the conductor of the
  curve at a prime~${ \ideal{ p } }$ if the corresponding field
  extension~${ K( \sqrt{ \gamma } ) }$ is ramified
  at~${ \ideal{ p } }$. The values~${ \gamma \in K^* }$ for
  which~${ K( \sqrt{ \gamma } ) }$ is unramified at a fixed
  prime~${ \ideal{ p } }$ form a
  subgroup~${ H_{\ideal{p}} \subseteq K^* }$ of which the
  quotient~${ K^* / H_{\ideal{p}} }$ is finite. Limiting ourselves to
  the primes dividing~${ \widetilde{\curl{N}}_1 }$
  and~${ \widetilde{\curl{N}}_2 }$ we thus only have a finite
  computation to see if these levels can be made any smaller.

  It turns out that by twisting we can get the lowest
  level
  \begin{equation*}
    \widetilde{\curl{N}}_1 =
    \begin{cases}
      \ideal{p}_2^{12} \, \ideal{p}_3^{2} \, \ideal{p}_5^{2} &
      \text{if } 2 \mid b \\
      \ideal{p}_2^{4} \, \ideal{p}_3^{2} \, \ideal{p}_5^{2} &
      \text{if } 2 \nmid b \text{ and } a \equiv 1 \text{ (mod } 4 \text{)} \\
      \ideal{p}_3^{2} \, \ideal{p}_5^{2} &
      \text{if } 2 \nmid b \text{ and } a \equiv 3 \text{ (mod } 4 \text{)},
    \end{cases}
  \end{equation*}
  in case we twist~${ E_1 }$ with~${ 6 + \sqrt{30} }$. If we twist the
  curve~${ E_1 }$ by~${ -6 - \sqrt{30} }$ we get the same level, but
  with the latter two conditions interchanged.
\end{remark}

Using Magma we quickly find that the dimension of some of the sought
spaces of newforms would be way too large to compute in. For example
using the levels of the untwisted curves the dimension of the smallest
space is~${ 206 \, 720 }$, which is way beyond the largest
computational examples done in the literature. We can only do better
in case~${ 2 \nmid b }$ where the twisted curve in the remark gives us
a space of dimension~${ 542 }$ for the newforms of
level~${ \ideal{p}_3^2 \, \ideal{p}_5^2 }$. A lower level for the
case~${ 2 \mid b }$ is lacking though, making this insufficient to prove
Theorem~\ref{thm:main} completely.


\section{${ \Q }$-curves}
\label{sec:Qcurves}
In this section we use the modularity of~${ \Q }$-curves to prove the
non-existence of solutions. This technique has been applied to other
Diophantine equations in works such as \cite{DieulefaitFreitas},
\cite{DieulefaitUrroz}, \cite{BennettChen}, \cite{Chen}, \cite{Chen1},
\cite{BennettChenDahmenYazdani}, and \cite{Ellenberg}. The approach
here is similar to the one in the mentioned articles, leaning heavily
on the work by Quer \cite{Quer}. It differs in some crucial points,
where we will give an algorithmic approach that works in a general
context.

Look back at our original curve
\begin{equation*}
  E : y^2 = x^3 + 2 A x^2 + B_1 x,
\end{equation*}
where~${ A^2 = B_1 + B_2 }$. Note that by construction this curve has
a 2-torsion point and hence an obvious~${ 2 }$-isogeny defined
over~${ \Q }$. From~\cite[III, example 4.5]{Silverman1} we deduce that
the image of this~${ 2 }$-isogeny is
\begin{equation*}
  \tilde{E} : y^2 = x^3 - 4 A x^2 + \left( 4 A^2 - 4 B_1 \right) x = x^3 - 4 A x^2 + 4 B_2 x,
\end{equation*}
which is a twist by~${ -2 }$ of the complementary curve
\begin{equation*}
  E' : y^2 = x^3 + 2 A x^2 + B_2 x.
\end{equation*}
In particular such a curve is thus~${ 2 }$-isogenous over a field
extension containing~${ \sqrt{-2} }$ to the curve in which the roles
of~${ B_1 }$ and~${ B_2 }$ are swapped.

Note that for the Frey curves~${ E_1 }$ and~${ E_2 }$ we constructed,
the chosen~${ B_1 }$ and~${ B_2 }$ were Galois conjugates of one
another, whilst~${ A }$ was rational. This implies that the
curves~${ E_1 }$ and~${ E_2 }$ are 2-isogenous to their Galois
conjugates over~${ K( \sqrt{ -2 } ) }$. This means that even though
the curves are not defined over~${ \Q }$, their isogeny class
over~${ \overline{\Q} }$ is. Curves over~${ \overline{\Q} }$ for which
their isogeny class is defined over~${ \Q }$ are
called~\emph{${ \Q }$-curves}.

Ribet \cite{Ribet} proved, using the Serre conjectures,
that~${ \Q }$-curves, which do not have complex multiplication, are
modular in the sense that they are isogenous to a quotient
of~${ J_1(N) }$ for some integer level~${ N > 0 }$. This form of
modularity gives us classical modular forms associated to our
elliptic curves.

The proof of Ribet makes use of abelian varieties of~${ \GL_2
}$-type. An abelian variety~${ A }$ over~${ \Q }$ is called of
${ \GL_2 }$-type if~${ \End{ A } \otimes \Q }$ contains a
number field of degree equal to~${ \dim A }$. Such a variety is
also~${ \Q }$-simple if and only if~${ \End{ A } \otimes \Q }$ is such
a number field (\cite[Theorem2.1]{Ribet}).

${ \Q }$-simple abelian varieties of~${ \GL_2 }$-type naturally arise
as the quotients of~${ J_1( N ) }$ associated to
Hecke eigenforms of weight 2. One dimensional quotients
over~${ \overline{\Q} }$ of such varieties are
naturally~${ \Q }$-curves. Ribet proved that~${ \Q }$-curves without
complex multiplication are always isogenous to such a quotient
(\cite[Theorem 6.1]{Ribet}). Furthermore he shows that~${ \Q }$-simple
abelian varieties of~${ \GL_2 }$-type are isogenous to a quotient
of~${ J_1(N) }$ for some~${ N > 0 }$ if the Serre modularity
conjectures hold (\cite[Theorem 4.4]{Ribet}).

We want to apply this theory to our~${ \Q }$-curves~${ E_1 }$
and~${ E_2 }$. In particular we want to explicitly compute
levels~${ N_1 }$ and~${ N_2 }$ for which~${ E_1 }$ and~${ E_2 }$ are
isogenous to quotients of~${ J_1(N_1) }$ and~${ J_1(N_2) }$
respectively, such that we can compute the associated modular
forms. For this we will need to work more explicitly with the abelian
varieties of~${ \GL_2 }$-type of which our curves will be
quotients. We will use results of Quer \cite{Quer} to compute these.

\begin{remark}
  In the papers of Ribet and Quer \cite{Ribet,Quer}
  all~${ \Q }$-curves considered are without complex
  multiplication. For ease of writing we shall also assume this for
  all~${ \Q }$-curves mentioned. This is fine since by
  Corollary~\ref{thm:non_CM} the curves~${ E_1 }$ and~${ E_2 }$ do not
  have complex multiplication.
\end{remark}

\subsection{Basic invariants}

We will use the following main result by Quer.
\begin{proposition} \label{thm:Quer5.1} \cite[Proposition~5.1]{Quer}
  Let~${ E }$ be a~${ \Q }$-curve for which both~${ E }$ and the
  isogenies defining the~${ \Q }$-curve structure are defined over
  some Galois number field~${ F }$. Let~${ B = \resultant_{\Q}^{F} E }$
  be the restriction of scalars. If~${ \End_\Q( B ) }$ is a
  commutative algebra, then~${ B }$ factors over~${ \Q }$ as a product
  of~${ \Q }$-simple mutually non~${ \Q }$-isogenous abelian varieties
  of~${ \GL_2 }$-type.
\end{proposition}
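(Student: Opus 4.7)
The plan is to unpack the algebra $\End_\Q(B) \otimes \Q$ explicitly using the $\Q$-curve structure, and then exploit commutativity to decompose $B$ through its primitive idempotents. First I would base-change to $F$, where $B_F \cong \prod_{\sigma \in G} {}^\sigma E$ with $G = \galois(F/\Q)$. Since the defining isogenies $\mu_\sigma \colon {}^\sigma E \to E$ are defined over $F$ and $E$ is non-CM, each $\operatorname{Hom}_F({}^\tau E, {}^\sigma E) \otimes \Q$ is a one-dimensional $\Q$-vector space, spanned by compositions of the $\mu_\sigma$. Taking $G$-invariants of $\End_F(B_F) \otimes \Q$ under the natural semilinear action, which permutes the factors ${}^\sigma E$, produces a $\Q$-basis indexed by $G$, so $\dim_\Q \bigl(\End_\Q(B) \otimes \Q\bigr) = [F:\Q]$. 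The induced multiplication is that of the twisted group algebra $\Q[G]^{c}$, where the cocycle $c(\sigma,\tau) \in \Q^{*}$ is read off from $\mu_\sigma \circ {}^\sigma\mu_\tau = c(\sigma,\tau) \cdot \mu_{\sigma\tau}$.

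Under the commutativity hypothesis, $\End_\Q(B) \otimes \Q$ is a finite-dimensional, commutative, semisimple $\Q$-algebra (semisimplicity holds because $G$ is finite and we are in characteristic zero), hence a finite product $\prod_i K_i$ of number fields $K_i$. The associated primitive idempotents $e_i$ give an isogeny decomposition $B \sim_\Q \prod_i A_i$ with $A_i := e_i B$ and $\End_\Q(A_i) \otimes \Q = e_i \bigl(\End_\Q(B) \otimes \Q\bigr) e_i = K_i$. Because each $K_i$ is a field, each $A_i$ is $\Q$-simple. If $A_i \sim_\Q A_j$ for some $i \neq j$, then $\End_\Q(A_i \times A_j) \otimes \Q$ would contain the non-commutative algebra $M_2(K_i)$ and inject into $\End_\Q(B) \otimes \Q$, contradicting commutativity; hence the $A_i$ are mutually non-$\Q$-isogenous.

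To establish the $\GL_2$-type conclusion, I would pass to Lie algebras. The field $K_i = \End_\Q(A_i) \otimes \Q$ acts faithfully on the $\Q$-vector space $\operatorname{Lie}(A_i)$, faithfulness following from injectivity of $\End(A_i) \to \End(\operatorname{Lie}(A_i))$ in characteristic zero; hence $\operatorname{Lie}(A_i)$ is a free $K_i$-module of some rank $r_i \geq 1$ and $\dim A_i = r_i \cdot [K_i:\Q]$. Combining this with the two identities $\sum_i \dim A_i = \dim B = [F:\Q]$ and $\sum_i [K_i:\Q] = \dim_\Q(\End_\Q(B) \otimes \Q) = [F:\Q]$ forces every $r_i = 1$, i.e.\ $\dim A_i = [K_i:\Q]$, which is precisely the $\GL_2$-type condition.

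The main technical obstacle lies in the first step: carefully tracking the natural semilinear Galois action on $\End_F(B_F) \otimes \Q$ and identifying the fixed subalgebra with the twisted group algebra $\Q[G]^{c}$. Once the dimension equality $\dim_\Q(\End_\Q(B) \otimes \Q) = [F:\Q]$ is in hand, the remaining structural steps — decomposition via central idempotents, incompatibility of repeated factors with commutativity, and the Lie-algebra rank argument — are essentially formal.
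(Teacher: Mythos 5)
The paper does not prove this proposition; it is cited directly as Proposition~5.1 of Quer, with no argument reproduced. So strictly there is no ``paper's proof'' to compare against. Your blind reconstruction is, however, substantially correct and follows the same standard line of reasoning that appears in Quer and earlier in Ribet: identify $\End_\Q(B)\otimes\Q$ with the twisted group algebra $\Q^{c}[G]$ of dimension $[F:\Q]$, decompose it via primitive idempotents under the commutativity hypothesis, and then use a dimension count to force each factor to be of $\GL_2$-type.

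A few points worth tightening. First, the non-CM hypothesis is essential for the claim that $\operatorname{Hom}_F({}^\tau E,{}^\sigma E)\otimes\Q$ is one-dimensional; it is not explicit in the statement but is imposed globally in both Ribet's and Quer's papers (and in this one, via the remark just after the discussion of Ribet's modularity result). You rely on it without flagging it. Second, for semisimplicity of $\End_\Q(B)\otimes\Q$ you do not actually need the twisted-Maschke argument you invoke: the endomorphism algebra of any abelian variety over a field is semisimple by Poincar\'e reducibility, so this is automatic and independent of any group-theoretic input. Third, your argument that the $A_i$ are mutually non-isogenous can be made cleaner and non-contradictory: since the $e_i$ are orthogonal central idempotents, $\Hom_\Q(A_i,A_j)\otimes\Q = e_j\bigl(\End_\Q(B)\otimes\Q\bigr)e_i = 0$ for $i\ne j$, so there simply is no nonzero homomorphism, let alone an isogeny. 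Finally, the dimension-count trick ($\sum_i r_i[K_i:\Q] = [F:\Q] = \sum_i [K_i:\Q]$ forces $r_i = 1$) is clean and correct, and it is essentially the same accounting that lies behind Quer's proof. The one genuinely non-formal step remaining, as you yourself note, is the precise identification of the $G$-invariants of $\End_F(B_F)\otimes\Q$ with $\Q^{c}[G]$; this is a standard computation carried out in the $\Q$-curve literature and you should cite it rather than leave it as an acknowledged gap.
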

In order to apply this result to our curves~${ E_1 }$ and~${ E_2 }$ we
will first recap some general theory.

Let~${ E }$ be~${ \Q }$-curve defined by
isogenies~${ \phi_\sigma : \prescript{\sigma}{}{E} \to E }$. To such a
curve we can associate the \emph{degree
  map}
\begin{equation*}
  d : G_\Q \to \Q^*, \quad \sigma \mapsto \deg \phi_\sigma,
\end{equation*}
and a 2-cocycle~${ c_E : G_\Q^2 \to \Q^* }$ given by
\begin{equation}
  \label{eq:cdef}
  c_E( \sigma, \tau ) = \phi_{\sigma} \prescript{\sigma}{}{\phi_{\tau}} \phi_{\sigma \tau}^{-1},
\end{equation}
where~${ \phi_{\sigma \tau}^{-1} }$ is the dual
of~${ \phi_{\sigma \tau} }$ divided by its degree. Since~${ E }$ has
no complex multiplication the element on the right can be considered
as a non-zero element of~${ \End( E ) \otimes \Q \cong \Q }$, hence
the equality makes sense. By taking degrees in
equation~\eqref{eq:cdef} it is obvious that~${ c_E^2 }$ is the
coboundary of~${ d }$. Furthermore Proposition~2.1 of \cite{Quer}
tells us that~${ d : G_\Q \to \Q^* / \left( \Q^* \right)^2 }$
and~${ \xi(E) = [ c_E ] \in H^2 ( G_\Q, \Q^* ) }$ are invariants of
the isogeny class of~${ E }$.

Note that~${ d : G_\Q \to \Q^* / \left( \Q^* \right)^2 }$ has trivial
coboundary, hence is a homomorphism. Its fixed field~${ K_d }$ is by
Remark~2.6.b of \cite{Quer} the smallest field over which a curve
isogenous to~${ E }$ can be defined.

We say that~${ E }$ is \emph{completely defined} over some
field~${ F }$ if it is defined over~${ F }$ and all~${ \phi_\sigma }$
are also defined over~${ F }$. According to Remark~2.6.a of
\cite{Quer} there are curves isogenous to~${ E }$ that are completely
defined over the field~${ K_d( \sqrt{\pm d_1}, \ldots, \sqrt{\pm d_n} ) }$
for~${ \{ d_1, \ldots, d_n \} }$ a minimal set of generators of the
image of~${ d }$ in~${ \Q^* / \left( \Q^* \right)^2 }$.

A \emph{splitting map} is a continuous
map~${ \beta : G_\Q \to \overline{\Q}^* }$ of which the coboundary
with respect to the trivial action on~${ \overline{ \Q } }$ is
precisely~${ c_E }$. If we consider such a~${ \beta }$ as a map
to~${ \overline{\Q}^* / \Q^* }$ it has trivial coboundary, hence it is
a homomorphism. We will call the fixed field~${ K_\beta }$ a
\emph{splitting field} of~${ E }$.

For a splitting map~${ \beta }$ the coboundary of the
map~${ \varepsilon = \beta^2 / d }$ is~${ c_E^2 / c_E^2 = 1 }$,
hence~${ \varepsilon }$ is a character called a \emph{splitting
  character} of~${ E }$. Its fixed field~${ K_\varepsilon }$ plays an
important role as~${ K_\beta = K_\varepsilon K_d }$.

A \emph{dual basis} for the degree map~${ d }$ is a
set~${ \{ (a_1, d_1), \ldots, (a_n, d_n) \} \subseteq \Z^2 }$ such that
\begin{itemize}
\item ${ K_d = \Q( \sqrt{a_1}, \ldots \sqrt{a_n} ) }$
  and~${ [K_d : \Q] = 2^n }$, and
\item Each~${ d_i }$ is the image of~${ d }$ at
  some~${ \sigma_i \in G_{\Q}^{K_d} }$, where~${ \sigma_i }$
  satisfies
  \begin{equation*}
    \prescript{\sigma_i}{}{\sqrt{a_j}} = (-1)^{\delta_{i j}} \sqrt{a_j}
  \end{equation*}
\end{itemize}
Theorem~3.1 of \cite{Quer} tells us that the sign
component~${ \xi_{\pm}(E) }$ of~${ \xi(E) }$, i.e. the part generated
by the sign of~${ c_E }$, can be described as
\begin{equation*}
  \xi_{\pm}(E) = \prod_{i=1}^n (a_i, d_i),
\end{equation*}
where~${ (a, d) }$ is the quaternion algebra over the rationals defined
by~${ i^2 = a}$,~${ j^2 = d }$ and~${ ij = k = -ji }$
inside~${ \Brauer_2( \Q ) = H^2( G_\Q, \{ \pm 1 \} ) }$.

This description of~${ \xi_{\pm}( E ) }$ in terms of a dual basis
gives us a condition to see if a curve isogenous to~${ E }$ can be
completely defined over~${ K_d }$
\cite[Corollary~3.3]{Quer}. Furthermore combined with the discussion
on page~302 of \cite{Quer} we know that a
character~${ \varepsilon : G_\Q \to \overline{\Q}^* }$ is a splitting
character if and only if
\begin{equation*}
  \varepsilon_p(-1) = \prod_{i=1}^n (a_i, d_i)_p,
\end{equation*}
for every prime number~${ p }$. Here~${ \varepsilon_p }$ is
the~${ p }$-part of~${ \varepsilon }$ considered as a Dirichlet
character and~${ (a_i, d_i)_p }$ is a Hilbert symbol.

A \emph{decomposition field} for~${ E }$ is an abelian number field
that contains both a field over which~${ E }$ is completely defined
and a splitting field of~${ E }$. Proposition~5.2 of~\cite{Quer} tells
us that the condition that~${ \End_\Q( B ) }$ is commutative in
Proposition~\ref{thm:Quer5.1} can be replaced by the condition
that~${ F }$ is a decomposition field and the existence of some
splitting map defined over~${ G_{\Q}^F }$. Furthermore
\cite[Proposition~5.2]{Quer} states that if~${ F }$ is a decomposition
field, the latter is always true for some curve isogenous to~${ E }$
over~${ \overline{ \Q } }$.

We now explicitly compute all these quantities for
our~${ \Q }$-curves~${ E_1 }$ and~${ E_2 }$.
\begin{proposition}
  \label{thm:fieldsEi}
  For both~${ E_1 }$ and~${ E_2 }$ we have the same data listed below.
  \begin{itemize}
  \item The degree map~${ d : G_\Q \to \Q^* }$ given by
    \begin{equation*}
      d( \sigma ) =
      \begin{cases}
        1 & \text{if } \sigma \in G_K \\
        2 & \text{if } \sigma \not\in G_K.
      \end{cases}
    \end{equation*}
  \item The 2-cocycle~${ c : G_\Q^2 \to \Q^* }$ given by
    \begin{equation*}
      c( \sigma, \tau ) =
      \begin{cases}
        1 & \text{if } \sigma \in G_{K( \sqrt{-2} ) } \text{ or } \tau \in G_K \\
        -1 & \text{if } \tau \not\in G_K \text{, } \prescript{\sigma}{}{\sqrt{-2}}
        = -\sqrt{-2} \text{ and } \prescript{\sigma}{}{\sqrt{30}} = \sqrt{30}, \\
        -2 & \text{if } \tau \not\in G_K \text{, } \prescript{\sigma}{}{\sqrt{-2}}
        = \sqrt{-2} \text{ and } \prescript{\sigma}{}{\sqrt{30}} = -\sqrt{30}, \\
        2 & \text{if } \tau \not\in G_K \text{, } \prescript{\sigma}{}{\sqrt{-2}}
        = -\sqrt{-2} \text{ and } \prescript{\sigma}{}{\sqrt{30}} = -\sqrt{30},
      \end{cases}
    \end{equation*}
  \item The field~${ K_d = K = \Q( \sqrt{30} )}$ over which the curves are defined.
  \item The field~${ K( \sqrt{-2} ) = K_d( \sqrt{-2} ) }$ over which
    the curves are completely defined.
  \item A dual basis~${ \{ ( 30, 2 ) \} }$.
  \item A splitting
    character~${ \varepsilon : G_\Q \to \overline{\Q}^* }$, that as a
    Dirichlet character is one of the characters of conductor 15 and
    order 4, with corresponding fixed
    field~${ K_\varepsilon = \Q( \zeta_{15} + \zeta_{15}^{-1} ) }$ of
    degree~4.
  \item A splitting
    field~${ K_\beta = K( \zeta_{15} + \zeta_{15}^{-1} ) = \Q(
      \sqrt{6}, \zeta_{15} + \zeta_{15}^{-1} ) }$ of degree 8.
  \item A decomposition
    field~${ K_{\text{dec}} = \Q( \sqrt{-2}, \sqrt{-3}, \zeta_{15} +
      \zeta_{15}^{-1} ) }$ of degree 16.
  \end{itemize}
\end{proposition}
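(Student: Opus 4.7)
The plan is to verify each item of the proposition in turn by making the $\Q$-curve structure of $E_1$ and $E_2$ explicit from the construction in Section~\ref{sec:FreyCurves} and then applying the formalism reviewed in Section~\ref{sec:Qcurves}.

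First, both $E_1$ and $E_2$ are of the shape $y^2 = x^3 + 2Ax^2 + B_1 x$ with $A \in \Q$, $B_1 \in K$, and $B_2 := {}^\sigma B_1$ (for $\sigma$ the non-trivial element of $\galois(K/\Q)$) satisfying $B_1 + B_2 = A^2$. The discussion at the start of Section~\ref{sec:Qcurves} produces an explicit 2-isogeny $\phi : E_i \to \tilde{E}_i$ over $K$ together with an isomorphism $E_i' \xrightarrow{\sim} \tilde{E}_i$ over $K(\sqrt{-2})$, where $E_i' : y^2 = x^3 + 2Ax^2 + B_2 x$ is the complementary curve. Since $A \in \Q$ and $\sigma$ swaps $B_1 \leftrightarrow B_2$, one has ${}^\sigma E_i = E_i'$. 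Composing the isomorphism with $\hat\phi$ thus defines a 2-isogeny $\phi_\sigma : {}^\sigma E_i \to E_i$ over $K(\sqrt{-2})$ for $\sigma \notin G_K$; I set $\phi_\sigma = \mathrm{id}$ for $\sigma \in G_K$. This immediately yields the degree map $d$, shows $K_d = K = \Q(\sqrt{30})$, and shows that the family $\{\phi_\sigma\}$ is completely defined over $K(\sqrt{-2})$.

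Next I compute the 2-cocycle $c$ from its definition in~\eqref{eq:cdef}, treating the composite as a scalar in $\End(E_i) \otimes \Q = \Q$, which is justified by Corollary~\ref{thm:non_CM}. If $\tau \in G_K$ then $\phi_\tau = \mathrm{id}$ and $\sigma\tau$ lies in the same $G_K$-coset as $\sigma$, so $c(\sigma,\tau) = 1$ at once. If $\tau \notin G_K$ all three factors are genuine 2-isogenies, and I keep track of how $\sigma$ acts on the coordinate formula for $\phi_\tau$. The $\sqrt{-2}$ appearing in the twist isomorphism contributes a sign to ${}^\sigma \phi_\tau$ whenever $\sigma$ negates $\sqrt{-2}$, while the $\sqrt{30}$ distinguishing $B_1$ from $B_2$ contributes an additional scalar factor whenever $\sigma$ swaps $B_1 \leftrightarrow B_2$ (and simultaneously interchanges source and target of the isogeny). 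The four combinations of these two actions recover the values $\pm 1, \pm 2$ displayed in the proposition; this sign and scalar bookkeeping is the main technical point.

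With $c$ and $d$ in hand, the remaining items follow by unwinding the recipes of \cite{Quer}. Because $K_d = \Q(\sqrt{30})$ has degree $2$ and the non-trivial element of $\galois(K_d/\Q)$ has $d$-value $2$, the singleton $\{(30, 2)\}$ is a dual basis. A splitting character $\varepsilon$ must therefore satisfy $\varepsilon_p(-1) = (30, 2)_p$ for every prime $p$, and a direct local computation gives $(30, 2)_p = 1$ for $p = \infty$, $p = 2$ and all $p \notin \{2, 3, 5\}$, while $(30, 2)_p = (2 \mid p) = -1$ for $p \in \{3, 5\}$. The only Dirichlet characters meeting these constraints are the two characters of conductor $15$ and order $4$; both are even, so $K_\varepsilon$ is the unique totally real quartic subfield of $\Q(\zeta_{15})$, namely $\Q(\zeta_{15} + \zeta_{15}^{-1})$. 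Since $\sqrt{5} \in K_\varepsilon$, the compositum $K_\beta = K_\varepsilon K_d$ reduces to $\Q(\sqrt{6}, \zeta_{15} + \zeta_{15}^{-1})$ of degree $8$, and adjoining $\sqrt{-2}$ for the field of complete definition finally gives $K_{\mathrm{dec}} = \Q(\sqrt{-2}, \sqrt{-3}, \zeta_{15} + \zeta_{15}^{-1})$ of degree $16$, where the presence of $\sqrt{-3}$ follows from $\sqrt{-2} \cdot \sqrt{6} = 2\sqrt{-3}$ and the degree count uses that $\Q(\sqrt{-3}, \zeta_{15} + \zeta_{15}^{-1})$ is unramified at $2$.
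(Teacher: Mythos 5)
Your proof is correct and follows essentially the same route as the paper: extract the explicit $2$-isogeny data from the Silverman formula plus the $\sqrt{-2}$-twist, read off the degree map and $2$-cocycle by tracking how Galois acts on $\sqrt{-2}$ and $\sqrt{30}$, and then apply Quer's formalism to get the dual basis, splitting character, and the various fields. The paper delegates everything past the by-hand cocycle computation to its accompanying code, whereas you spell out the Hilbert-symbol constraints $\varepsilon_p(-1)=(30,2)_p$ and the field-degree bookkeeping (using $\sqrt{5}\in\Q(\zeta_{15}+\zeta_{15}^{-1})$ and $\sqrt{-2}\cdot\sqrt{6}=2\sqrt{-3}$) explicitly; this is a faithful unpacking of what the code computes rather than a different argument.
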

\begin{proof}
  All this data can be computed from the
  isogenies~${ \phi_{\sigma} : \prescript{\sigma}{}{E_i} \to E_i }$,
  which we can take to be the identity if~${ \sigma \in G_K }$ and
  the~${ 2 }$-isogeny over~${ K( \sqrt{-2} ) }$ described before
  otherwise. Note that the latter can be explicitly described using
  the formula in~\cite[III, example 4.5]{Silverman1} and the map
  scaling with~${ \sqrt{-2} }$. In fact we compute from this by hand
  the degrees of these isogenies and the one-cocycle discussed on
  pages 288 and 289 of \cite{Quer} from which the code~\cite{code} can
  compute all the other data.
\end{proof}

\begin{remark}
  Note that the field~${ K( \sqrt{-2} ) }$ is a minimal field over
  which a curve isogenous to~${ E_1 }$ or~${ E_2 }$ can be completely
  defined. This is easily verified through Corollary~3.3 in
  \cite{Quer}, as it excludes the case that an isogenous curve can be
  defined over the field~${ K_d = K }$ which is the only possible
  smaller field. For this one checks that
  \begin{equation*}
    ( 30, 2 ) \ne 1 \quad \text{and} \quad ( 30, 2 ) \ne ( -1, 30 ),
  \end{equation*}
  inside~${ \Brauer_2( \Q ) }$, which can be verified by checking the
  corresponding Hilbert symbols at~${ 5 }$.
\end{remark}


\subsection{A decomposable twist}

As mentioned Proposition~5.2 in \cite{Quer} tells us that if~${ F }$
is a decomposition field, then some curve isogenous to~${ E_i }$
satisfies all the conditions of Proposition~\ref{thm:Quer5.1}. We thus
know that there are curves isogenous to~${ E_1 }$ and~${ E_2 }$ of
which the restrictions of scalars over~${ K_{\text{dec}} }$ decompose
as the product of~${ \Q }$-simple abelian varieties
of~${ \GL_2 }$-type.

Our goal now is to find these curves isogenous to~${ E_1 }$
and~${ E_2 }$ for which the result of Proposition~\ref{thm:Quer5.1}
applies. According to Proposition~5.2 of~\cite{Quer} these should be
curves for which the class of the 2-cocycle
in~${ H^2( G_{\Q}^{K_{\text{dec}}}, \Q^* ) }$ is the same as the
coboundary of the chosen splitting map. It is already discussed on
page~297 of \cite{Quer} that these curves can be obtained as twists of
the original curves by some element~${ \gamma \in K_{\text{dec}}
}$. This result depends on a certain embedding problem being
unobstructed as proven in \cite{Quer2}.

In most articles in which Frey~${ \Q }$-curves are used a correct
twist is found by making an educated guess or a small search using
results from \cite{Quer2}, for example in \cite{DieulefaitFreitas},
\cite{DieulefaitUrroz}, \cite{Chen1}, and \cite{Chen}. We present here
an algorithmic approach that always works based on the approach in
\cite{BennettChen}.

We first explain the approach for a general~${ \Q }$-curve~${ E
}$. Let notation be as before and let~${ F }$ be a decomposition field
of~${ E }$ with corresponding splitting map~${ \beta }$.
If~${ c_\beta }$ is the coboundary of~${ \beta }$ with respect to
trivial action on~${ \overline{\Q}^* }$, we can look at the
2-cocycle~${ c_\beta / c_E }$. By correctly rescaling~${ \beta }$ if
necessary this can be interpreted as an element
of~${ H^2( G_{\Q}^{F} , \{ \pm 1 \} ) }$, hence it defines an
embedding problem of Galois groups as noted in \cite[page 297]{Quer}.

The solutions~${ F( \sqrt{ \gamma } ) }$ to this embedding problem
give~${ \gamma \in F^* }$ that perform the sought twist. Furthermore
these are the~${ \gamma }$ that satisfy
\begin{equation*}
  \prescript{\sigma}{}{\gamma} = \alpha(\sigma)^2 \gamma \text{ for all } \sigma \in G_{\Q}^{F},
\end{equation*}
for some~${ \alpha : G_{\Q}^{F} \to F^* }$ of which the coboundary
is~${ c_\beta / c_E }$. By finding
an explicit~${ \alpha : G_{\Q}^{F} \to F^* }$ of which the coboundary
is~${ c_\beta / c_E }$ we can find such a~${ \gamma \in F^* }$ by
applying Hilbert 90 to~${ \alpha^2 }$.

The trick used in \cite{BennettChen} is to find such~${ \alpha }$ that
take values in the finitely generated group~${ \curl{O}_F^* }$. This
makes finding~${ \alpha }$ a linear problem in the exponents with the
equations
\begin{equation*}
  \alpha( \sigma ) \prescript{\sigma}{}{\alpha( \tau )} \alpha(\sigma \tau)^{-1}
  = \frac{c_\beta( \sigma, \tau )}{c_E( \sigma, \tau ) } \quad \sigma, \tau \in G_{\Q}^{F}.
\end{equation*}
In general such an~${ \alpha }$ need not exist, but we can make a more
precise statement. Since any solution~${ \gamma }$ in the embedding
problem may be changed by a square, we can limit the primes appearing
in~${ \prescript{\sigma}{}{\gamma} \gamma^{-1} }$ to those generating
the class group and their conjugates. This implies we can find a
finite set~${ S }$ of primes of~${ F }$ for which an~${ \alpha }$ with
values in the~${ S }$-units~${ \curl{O}_{F, S}^* }$ exists if any
solution exists. In particular if the class number is one, such
an~${ \alpha }$ exists with values in~${ \curl{O}_{F}^* }$. In any
case~${ \curl{O}_{F, S}^* }$ is finitely generated so the problem is
solvable using linear algebra. The author has results that make the
set~${ S }$ much more precise which have been incorporated in the
code~\cite{code} to work for arbitrary fields~${ F }$. A computation
in SAGE \cite{sage} shows that the class group of~${ K_{\text{dec}} }$ is
trivial, so we will omit these results for this case.

\begin{remark}
  We can be very explicit about how unique the
  twist~${ \gamma \in F^* }$ is. Combining the long exact
  sequences over~${ G_{\Q}^{F} }$ for the short exact
  sequences
  \begin{equation*}
    \begin{tikzcd}
      1 \arrow[r] &
      \{ \pm 1 \} \arrow[r] &
      F^* \arrow[r, "\cdot^2"] &
      \left( F^* \right)^2 \arrow[r] &
      1,
    \end{tikzcd}
  \end{equation*}
  and
  \begin{equation*}
    \begin{tikzcd}
      1 \arrow[r] &
      \left( F^* \right)^2 \arrow[r] &
      F^* \arrow[r] &
      F^* / \left( F^* \right)^2 \arrow[r] &
      1,
    \end{tikzcd}
  \end{equation*}
  we get a diagram of the form
  \begin{equation*}
    \begin{tikzcd}[column sep=small]
      & &
      H^1 \left( G_{\Q}^{F}, F^* \right) = 1 \arrow[d] \\
      \Q^* \arrow[r] &
      \left( F^* / \left( F^* \right)^2 \right)^{G_{\Q}^{F}} \arrow[r] &
      H^1 \left( G_{\Q}^{F}, \left( F^* \right)^2 \right) \arrow[r] \arrow[d] &
      H^1 \left( G_{\Q}^{F}, F^* \right) = 1 \\ & &
      H^2 \left( G_{\Q}^{F}, \{ \pm 1 \} \right).
    \end{tikzcd}
  \end{equation*}
  Here~${ H^1 \left( G_{\Q}^{F}, F^* \right) = 1
  }$ follows from Hilbert 90. Now the class of the
  sought~${ \gamma }$ lives
  in~${ \left( F^* / \left( F^* \right)^2
    \right)^{G_{\Q}^{F}} }$, that is all
  those~${ \gamma \in F^* }$ for
  which~${ \prescript{\sigma}{}{\gamma} \gamma^{-1} \in \left(
      F^* \right)^2 }$ for
  all~${ \sigma \in G_{\Q}^{F} }$. Furthermore we see
  that two such~${ \gamma }$ define the same resulting class
  in~${ H^2 \left( G_{\Q}^{F}, \{ \pm 1 \} \right) }$
  if and only if they differ by an element
  of~${ \Q^* \left( F^* \right)^2 }$.
\end{remark}

We now apply this theory to~${ E_1 }$ and~${ E_2 }$.
Since~${ K_{\text{dec}} }$ has class number~${ 1 }$ we can search for
an~${ \alpha : G_{\Q}^{ K_{\text{dec}} } \to
  \curl{O}_{K_{\text{dec}}}^* }$. The code~\cite{code} does this to
find suitable twists of~${ E_1 }$ and~${ E_2 }$. As remarked above we
can change the twist by a square, which we do to find one for which
the twist parameter has a smaller minimal polynomial. In fact we find
that for~${ \gamma \in K_{ \text{dec} } }$ any root of the polynomial
\begin{equation*}
  x^{8} - 40 x^{7} - 550 x^{6} - 1840 x^{5} - 285 x^{4} + 3600 x^{3} - 1950 x^{2} + 200 x + 25,
\end{equation*}
the twists~${ E_{1, \gamma} }$ and~${ E_{2, \gamma} }$ of~${ E_1 }$
and~${ E_2 }$ by~${ \gamma }$ satisfy the conditions of
Proposition~\ref{thm:Quer5.1}. Note that this polynomial is not
irreducible over~${ K }$, but still any choice of root will suffice.

Note that by twisting the curves, we must also change the isogenies
that define the~${ \Q }$-curve structure.  On page~291 of \cite{Quer}
it is made explicit how these isogenies change under twists. The
code~\cite{code} computes these new isogenies automatically. As the
isogenies differ and~${ \Q( \gamma ) = K_{\beta} }$, the twisted
curves~${ E_{1, \gamma} }$ and~${ E_{2, \gamma} }$ are completely
defined over~${ K_{ \beta } }$. Since the same splitting maps also
work for the twisted curves, this means~${ K_{ \beta } }$ is a
decomposition field for~${ E_{1, \gamma} }$ and~${ E_{2, \gamma} }$.

Applying Proposition~\ref{thm:Quer5.1} with the facts above we get the
following proposition.
\begin{proposition}
  \label{thm:twistEi}
  For each~${ i \in \{ 1, 2 \} }$ the abelian variety
  ${ \resultant_{\Q}^{K_\beta} E_{i, \gamma} }$ is~${ \Q }$-isogenous to
  a product of~${ \Q }$-simple, mutually non~${ \Q }$-isogenous
  abelian varieties of~${ \GL_2 }$-type.
\end{proposition}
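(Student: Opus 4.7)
The plan is a direct invocation of Proposition~\ref{thm:Quer5.1} with $F = K_{\beta}$. Two hypotheses must be checked for each twisted curve $E_{i, \gamma}$: that it is completely defined over a Galois number field $F$, and that $\End_{\Q}(B)$ is commutative, where $B = \resultant_{\Q}^{K_{\beta}} E_{i, \gamma}$. The first was already recorded in the paragraph introducing $\gamma$: the twist parameter lies in $K_{\beta}$ and the correction to the defining isogenies supplied on page~291 of~\cite{Quer} brings the whole $\Q$-curve structure down to $K_{\beta}$. Moreover, since $K_{\beta}$ is by definition a splitting field and now also contains a field of complete definition, it qualifies as a decomposition field for $E_{i, \gamma}$.

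For the commutativity of $\End_{\Q}(B)$ the strategy is to bypass a direct endomorphism computation and apply Proposition~5.2 of~\cite{Quer} instead, which gives exactly this commutativity whenever the decomposition field $F$ admits a splitting map defined over $G_{\Q}^{F}$. The twist parameter $\gamma$ was constructed in the preceding subsection precisely so that this condition holds: it is a solution to the embedding problem determined by $c_{\beta}/c_{E_i}$, with the consequence that the unchanged splitting map $\beta$ becomes a splitting map for $E_{i, \gamma}$ whose coboundary agrees with $c_{E_{i, \gamma}}$ in $H^2(G_{\Q}^{K_{\beta}}, \Q^*)$. Hence Proposition~5.2 of~\cite{Quer} applies and delivers the commutativity hypothesis needed by Proposition~\ref{thm:Quer5.1}.

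With both inputs in place, Proposition~\ref{thm:Quer5.1} yields the conclusion at once: $B$ decomposes over $\Q$ as a product of $\Q$-simple, mutually non-$\Q$-isogenous abelian varieties of $\GL_2$-type, and this holds uniformly for $i \in \{1, 2\}$. The genuine difficulty of this entire circle of ideas lies upstream, in producing a twist $\gamma$ with the requisite property: this was carried out via Hilbert~90 applied to $\alpha^2$, made effective by the triviality of the class group of $K_{\text{dec}}$ and by the explicit search for an $\alpha$ taking values in $\curl{O}_{K_{\text{dec}}}^*$ with coboundary $c_{\beta}/c_{E_i}$. The proposition itself is, by design, the formal harvest of that construction, so no further obstacle arises at this step.
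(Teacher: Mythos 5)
Your proposal is correct and follows the same route as the paper: verify complete definition of $E_{i,\gamma}$ over the Galois field $K_\beta$ (from $\Q(\gamma)=K_\beta$ and the twisted isogenies), note that $K_\beta$ is therefore a decomposition field, invoke Proposition~5.2 of~\cite{Quer} to get the commutativity hypothesis from the construction of $\gamma$ via the embedding problem, and then conclude with Proposition~\ref{thm:Quer5.1}. The paper states this more tersely as a direct application of Proposition~\ref{thm:Quer5.1} with the preceding facts, but the logical content is identical.
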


In order to work with the abelian varieties of~${ \GL_2 }$-type that
arise in Proposition~\ref{thm:twistEi} we need to compute some
explicit data about these varieties. In particular we will need the
dimension and endomorphism ring of each factor. For this we need some
more theory which we again first discuss for a
general~${ \Q }$-curve~${ E }$.

Let~${ F }$ be a decomposition field for~${ E }$. As discussed in
Section~5 of \cite{Quer}, the
isogenies~${ \phi_\sigma : \prescript{\sigma}{}{E} \to E }$ induce
endomorphisms~${ \Phi_\sigma }$ on~${ \resultant_{\Q}^{F} E }$ that are
defined over~${ \Q }$. If~${ \resultant_{\Q}^{F} E }$ also decomposes
into abelian varieties~${ A }$ of~${ \GL_2 }$-type,
these~${ \Phi_\sigma }$ induce non-zero
elements~${ \beta( \sigma ) \in \End A \otimes \Q }$ on each~${ A }$.
Since the ring~${ \End A \otimes \Q }$ is a number field, these give
rise to maps~${ \beta : G_\Q^{F} \to \overline{\Q}^* }$. Since
\begin{equation*}
  \phi_{\sigma} \prescript{\sigma}{}{\phi_{\tau}} \phi_{\sigma \tau}^{-1} = c_E( \sigma, \tau )
  \text{ for all } \sigma, \tau \in G_{\Q}^{\F},
\end{equation*}
we find that
\begin{equation*}
  \beta( \sigma ) \beta( \tau ) \beta( \sigma \tau )^{-1} = c_E( \sigma, \tau )
  \text{ for all } \sigma, \tau \in G_{\Q}^{\F},
\end{equation*}
so each~${ \beta }$ is a splitting map.

In this setting we also have
that~${ R = \End_\Q \left( \resultant_{\Q}^{F} E \right) \otimes \Q }$ is generated
by~${ \Phi_\sigma }$ as a~${ \Q }$-vector space. Given a splitting
map~${ \beta : G_{\Q}^{F} \to \overline{\Q}^* }$ we get a
ring homomorphism
\begin{equation*}
  R \to \overline{\Q}, \quad \Phi_\sigma \mapsto \beta( \sigma ),
\end{equation*}
of which the kernel defines a~${ \Q }$-simple factor~${ A }$
of~${ \resultant_{\Q}^{F} E }$. This and the fact above gives a
correspondence between splitting
maps~${ G_{\Q}^{F} \to \overline{\Q}^* }$ and the factors
of~${ \resultant_{\Q}^{F} E }$.

Note that any two splitting
maps~${ \beta_1, \beta_2 : G_{\Q}^{F} \to \overline{\Q}^* }$ have the
same coboundary, hence the
difference~${ \chi = \beta_2 \beta_1^{-1} }$ is a character.  Since
there are only finitely many characters on~${ F }$ we can compute all
such splitting maps by computing a single one, which we already did to
compute a decomposition field~${ F }$.

Two distinct splitting maps might however correspond to
different~${ \Q }$-simple factors of~${ \resultant_{\Q}^{F} E }$ which
are isogenous over~${ \Q }$. According to Lemma~5.3 of \cite{Quer}
this is the case if and only if the two splitting maps are Galois
conjugates of one another. To compute the~${ \Q }$-simple factors
of~${ \GL_2 }$-type it thus suffices to compute all Galois conjugacy
classes of splitting maps~${ G_{\Q}^F \to \overline{\Q}^*}$.

Note that by the construction of an abelian variety~${ A }$ from a
splitting map~${ \beta }$, the ring~${ \End A \otimes \Q }$ must be
the smallest number field containing the values of~${ \beta
}$. According to Proposition~4.1 in \cite{Quer} this field can be
explicitly computed from the corresponding splitting character and a
dual basis of the degree map. Since~${ A }$ is~${ \Q }$-simple and
of~${ \GL_2 }$-type the degree of this field is also the dimension
of~${ A }$.

Performing the necessary computations we can improve
Proposition~\ref{thm:twistEi} as follows.
\begin{theorem}
  \label{thm:EiDecomposition}
  Let~${ i \in \{ 1, 2 \} }$. We have that
  \begin{equation*}
    \resultant_{\Q}^{K_{\beta}} E_{i, \gamma} \text{ is } \Q \text{-isogenous to } A_{i, 1} \times A_{i, 2},
  \end{equation*}
  where
  \begin{itemize}
  \item each ${ A_{i, j} }$ is a~${ \Q }$-simple abelian variety
    of~${ GL_2 }$-type over~${ \Q }$ of dimension 4
    with~${ \End A_{i, j} \otimes \Q \cong L_\beta = \Q(\zeta_8) }$, and
  \item the varieties~${ A_{i, 1} }$ and~${ A_{i, 2} }$ are not
    isogenous over~${ \Q }$.
  \end{itemize}
\end{theorem}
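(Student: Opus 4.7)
The plan is to use the correspondence (developed in Section~5 of~\cite{Quer}) between the~${ \Q }$-simple factors of~${ R_i := \resultant_{\Q}^{K_\beta} E_{i, \gamma} }$ and Galois conjugacy classes of splitting maps~${ \beta : G_\Q \to \overline{\Q}^* }$ for~${ E_{i, \gamma} }$. From Proposition~\ref{thm:twistEi} we already know that~${ R_i }$ decomposes as a product of~${ \Q }$-simple, mutually non-${ \Q }$-isogenous abelian varieties of~${ \GL_2 }$-type, and~${ \dim R_i = [K_\beta : \Q] = 8 }$. What remains is to determine the number of factors and, for each, to identify its dimension and endomorphism algebra.

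For each factor~${ A }$ there is an associated Galois conjugacy class of splitting maps, and one has~${ \End_{\Q} A \otimes \Q \cong L_\beta }$ and~${ \dim A = [L_\beta : \Q] }$, where~${ L_\beta }$ is the number field generated by the values of any splitting map~${ \beta }$ in the class. I would compute~${ L_\beta }$ explicitly by applying Proposition~4.1 of~\cite{Quer} to the splitting character~${ \varepsilon }$ (of conductor~${ 15 }$ and order~${ 4 }$) and the dual basis~${ \{ (30, 2) \} }$ supplied by Proposition~\ref{thm:fieldsEi}. Since~${ \beta^2 = d \, \varepsilon }$, this amounts to adjoining to the field of values of~${ \varepsilon }$, which is~${ \Q(i) }$, a square root of~${ 2 \, \varepsilon(\sigma_1) }$ for a suitable lift~${ \sigma_1 }$ of the non-trivial element of~${ \galois(K / \Q) }$; the outcome, after evaluating~${ \varepsilon }$ on~${ \sigma_1 }$, is~${ L_\beta \cong \Q(\zeta_8) }$.

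With~${ [L_\beta : \Q] = 4 }$ for every Galois conjugacy class, each~${ \Q }$-simple factor has dimension~${ 4 }$. Since~${ \dim R_i = 8 }$, there are at most two factors. To see that there are exactly two, I would enumerate the splitting maps explicitly; because any two differ by a Dirichlet character, this is a finite check carried out in~\cite{code}, and grouping by the Galois action yields precisely two conjugacy classes. This produces two factors~${ A_{i, 1} }$ and~${ A_{i, 2} }$; their mutual non-${ \Q }$-isogenousness is already part of the conclusion of Proposition~\ref{thm:Quer5.1}. The argument is uniform in~${ i \in \{ 1, 2 \} }$ because Proposition~\ref{thm:fieldsEi} exhibits the same degree map, 2-cocycle, splitting character and decomposition field for~${ E_1 }$ and~${ E_2 }$, and~${ L_\beta }$ depends only on these data.

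The main obstacle is the identification~${ L_\beta = \Q(\zeta_8) }$: although~${ K_\varepsilon = \Q(\zeta_{15} + \zeta_{15}^{-1}) }$ is the totally real degree-${ 4 }$ field cut out by~${ \varepsilon }$, the field of values~${ L_\beta }$ turns out to be the degree-${ 4 }$ CM field~${ \Q(\zeta_8) }$, once the dual-basis datum forces the adjunction of~${ i }$ and~${ \sqrt{2} }$. Carrying out this Hilbert-symbol bookkeeping (either directly from Quer's Proposition~4.1 or by constructing an explicit~${ \beta }$ from the isogeny data in Proposition~\ref{thm:fieldsEi}) and then running the finite enumeration of splitting maps with~\cite{code} completes the proof.
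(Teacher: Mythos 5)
Your proposal follows the same route as the paper: the paper proves Theorem~\ref{thm:EiDecomposition} by invoking the general theory laid out immediately before it (the correspondence between Galois conjugacy classes of splitting maps and the~$\Q$-simple factors, Quer's Proposition~4.1 to identify~$L_\beta$ from the splitting character and dual basis, and the fact that~$\dim A_{i,j} = [L_\beta : \Q]$), and then "performing the necessary computations" with the aid of~\cite{code}, which is exactly what you describe. Your identification of the two conjugacy classes, the dimension count from~$\dim R_i = [K_\beta : \Q] = 8$, the computation~$L_\beta = \Q(\zeta_8)$, and the appeal to Proposition~\ref{thm:Quer5.1} for mutual non-isogeny reproduce the paper's argument.
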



\subsection{Modularity of~${ \Q }$-curves}
\label{sec:QcurveModularity}

Since the Serre conjectures have been proven by Khare and Wintenberger
in \cite{KhareWintenberger1} and \cite{KhareWintenberger2} we can now
use the modularity result proven by Ribet in \cite{Ribet}. In order to
be explicit about the level and character of the corresponding
newforms we need to discuss some more theory.

Let~${ E }$ be a~${ \Q }$-curve as before and
assume~${ B := \residue_{\Q}^{F} E }$ decomposes into abelian
varieties of~${ \GL_2 }$-type. If~${ A }$ is a~${ \Q }$-simple factor
of~${ B }$ with corresponding splitting
map~${ \beta : G_{\Q}^{F} \to \overline{\Q}^* }$, we have a
commutative diagram
\begin{equation*}
  \begin{tikzcd}
    E \arrow[r, "\sigma"] &
    \prescript{\sigma}{}{E} \arrow[r, "\phi_{\sigma}"] &
    E \\
    B \arrow[r, "\sigma"] \arrow[u] \arrow[d] &
    B \arrow[r, "\Phi_\sigma"] \arrow[u] \arrow[d] &
    B \arrow[u] \arrow[d] \\
    A \arrow[r, "\sigma"] &
    A \arrow[r, "\beta(\sigma)"] &
    A,
  \end{tikzcd}
\end{equation*}
for each~${ \sigma \in G_{\Q} }$. Since the
map~${ \Phi_\sigma \circ \sigma }$ on~${ B }$ is completely determined
by~${ \phi_\sigma \circ \sigma }$ on~${ E }$, we find that the
map~${ \beta(\sigma) \circ \sigma }$ on~${ A }$ is also completely
fixed by this. In particular~${ \beta(\sigma) \circ \sigma }$ acts on
the Tate module~${ V_p( A ) }$ as~${ \phi_\sigma \circ \sigma }$ acts
on~${ V_p( E ) }$ for each prime number~${ p }$.

Let~${ L_A := \End A \otimes \Q }$. As argued at the start of
Section~2 in \cite{Ribet} the Tate module~${ V_p( A ) }$ is a free
module over~${ L_A \otimes \Q_p }$ of rank 2 and hence the action
of~${ G_\Q }$ on it decomposes as a product of 2-dimensional
representations~${ \rho_{A, \ideal{p}} : G_\Q \to \GL_2( L_{A,
    \ideal{p}} ) }$ for~${ \ideal{p} \mid p }$ a prime of~${ L_A
}$. By the argument above~${ \sigma }$ acts
on~${ V_p( A ) \cong \left( L_A \otimes \Q_p \right)^2 }$
as~${ \beta(\sigma)^{-1} }$ times the action
of~${ \phi_{\sigma} \circ \sigma }$ on~${ V_p( E ) }$,
hence~${ \rho_{A, \ideal{p}}( \sigma ) \sim \beta(\sigma)^{-1} \,
  \rho_{E, p}(\sigma) }$ if~${ \phi_\sigma = 1 }$. Here~${ \sim }$
means the matrices are conjugate to each other. This is in particular
the case for all~${ \sigma \in G_{F} }$.

\label{sec:galoisDiscussion}

\begin{remark}
  If there is an isogeny~${ \phi : E \to E' }$ defined
  over~${ \overline{ \Q } }$, then by choosing specific
  isogenies~${ \phi_{\sigma}' : \prescript{\sigma}{}{E'} \to E }$ we
  have a commutative diagram
  \begin{equation*}
    \begin{tikzcd}
      E \arrow[r, "\sigma"] \arrow[d, "\phi"] &
      \prescript{\sigma}{}{E} \arrow[r, "\phi_{\sigma}"] \arrow[d, "\prescript{\sigma}{}{\phi}"]&
      E \arrow[d, "\phi"]\\
      E' \arrow[r, "\sigma"] &
      \prescript{\sigma}{}{E'} \arrow[r, "\phi_{\sigma}'"] &
      E',
    \end{tikzcd}
  \end{equation*}
  for all~${ \sigma \in G_{\Q} }$. This implies
  that~${ \beta( \sigma ) \circ \sigma }$ acts on~${ V_p( A ) }$
  as~${ \phi_{\sigma}' \circ \sigma }$ acts on~${ V_p( E' ) }$. We
  thus find
  that~${ \rho_{A , \ideal{p}} \sim \beta(\sigma)^{-1} \, \rho_{E',
      p}(\sigma) }$ for all~${ \sigma \in G_{\Q} }$
  with~${ \phi_\sigma' = 1 }$.
\end{remark}

For each~${ \sigma \in G_{\Q} }$ we note that
\begin{IEEEeqnarray*}{rCl}
  \det \rho_{A, \ideal{p}}( \sigma )
  & = & \det \left( \beta(\sigma)^{-1} \, \phi_\sigma \circ \sigma \right)
  = \beta( \sigma )^{-2} \, \det \left( \phi_\sigma \circ \sigma \right) \\
  & = &\beta( \sigma )^{-2} \, d( \sigma ) \, \chi_p( \sigma )
  = \varepsilon( \sigma )^{-1} \, \chi_p( \sigma ),
\end{IEEEeqnarray*}
where~${ \varepsilon }$ is the splitting character of the splitting
map~${ \beta }$,~${ d }$ is the degree map and~${ \chi_p }$ is
the~${ p }$-cyclotomic character. The fact
that~${ \det \left( \phi_\sigma \circ \sigma \right) = d( \sigma ) \,
  \chi_p( \sigma ) }$ can be easily shown using the Weil
pairing.

Theorem~4.4 in \cite{Ribet} tells us that~${ A }$ is a quotient
of~${ J_1 ( N ) }$ for some~${ N > 0 }$. Looking at the proof we in
fact have that~${ A }$ is isogenous to the quotient~${ A_f }$
of~${ J_1(N) }$ associated to a particular weight 2 newform~${ f }$ of
level~${ N }$. The character of this newform is according to
Lemma~${ 4.3 }$ and Lemma~${ 3.1 }$ in \cite{Ribet} equal
to~${ \chi_p^{-1} \cdot \det \rho_{A, \ideal{p}} = \varepsilon^{-1}
}$. Furthermore~${ A }$ and~${ A_f }$ being isogenous implies
that~${ N^{ \dim A } }$ is equal to the conductor of~${ A }$.

Applying the above to all factors of~${ B = \End_{\Q}^{F} E }$ we find
that
\begin{equation*}
  B \text{ is } \Q \text{-isogenous to } \prod_{i} A_{f_i},
\end{equation*}
for each~${ f_i }$ a newform of weight~2, level~${ N_i }$ and
character~${ \varepsilon_i^{-1} }$. Here~${ \varepsilon_i }$ is the
splitting character corresponding to~${ A_{f_i} }$. Using the formula
to compute the conductor of the restriction of scalars
from~\cite[Proposition~1]{Milne} we now have two ways to compute the
conductor~${ N_B }$ of~${ B }$, hence
\begin{equation}
  \label{eq:conductorRelation}
  \Delta_F^2 \, \curl{N}_{\Q}^{F} N_E = N_B = \prod_{i} N_i^{\dim A_{f_i}},
\end{equation}
where~${ N_E }$ is the conductor of~${ E }$ over~${ F
}$,~${ \Delta_F }$ is the discriminant of~${ F }$,
and~${ \curl{N}_{\Q}^{F} }$ is the norm of~${ F }$.

Note that the~${ \ideal{p} }$-adic Galois
representation~${ \rho_{f_i, \ideal{p}} : G_\Q \to \GL_2( K_{f_i,
    \ideal{p} } )}$ is in fact
the same as~${ \rho_{A_{f_i}, \ideal{p} } : G_\Q \to \GL_2( K_{f_i,
    \ideal{p}} ) }$ by definition. Here~${ K_{f_i} }$ is the
coefficient field of~${ f_i }$ and we
have~${ K_{f_i} = \End A_{f_i} \otimes \Q }$. If~${ \beta_i }$ is the
splitting map corresponding to~${ A_{f_i} }$ then
each~${ \rho_{f_i, \ideal{p} } }$ is in fact defined
by~${ \beta_i^{-1} }$ times the same action on~${ V_p(E) }$ as
described before. This implies that for~${ i \ne j }$ we
have~${ \rho_{f_j, \ideal{p}} = \beta_i \beta_j^{-1} \rho_{f_i,
    \ideal{p}} }$, where~${ \chi = \beta_i \beta_j^{-1} }$ is a
character. Therefore the coefficients of~${ f_j }$ are a twist of the
coefficients of~${ f_i }$, hence~${ f_j }$ is a twist of~${ f_i }$
by~${ \chi }$.

\begin{remark}
  In the case there is only one factor~${ A_{f_i} }$
  equation~\eqref{eq:conductorRelation} is already sufficient to
  determine the level of the corresponding newform~${ f_i }$. This is
  the case in \cite{Ellenberg}, \cite{DieulefaitUrroz}, \cite{Chen},
  and \cite{BennettChen}. In \cite{DieulefaitFreitas} there are two
  factors, but this problem is solved by studying the relation between
  the conductors of the Galois
  representations~${ \rho_{A_{f_i}, \ideal{p}} }$ rather than the
  relation between the levels of the~${ f_i }$. In \cite{Chen1} there
  is also two factors and the same approach as the one here is used.
\end{remark}

In \cite{AtkinLi} Atkin and Li give results on how the level of a
newform can change under a twist. This allows us to relate the
levels~${ N_i }$ of the newforms related to~${ E }$. Combined with the
formula in \eqref{eq:conductorRelation} this gives a way to compute a
finite list of candidates for the levels. For completeness we
formulate the result by Atkin and Li we use here.
\begin{proposition}
  \label{thm:newformTwist}
  Let~${ f }$ be a newform of level~${ N }$, weight~${ k }$ and
  character~${ \varepsilon }$ and let~${ \chi }$ be a Dirichlet
  character of level~${ p^\beta }$ for some prime number~${ p
  }$. Let~${ \alpha }$ and~${ \gamma }$ be the valuation of~${ p }$ in
  the conductor of~${ \varepsilon }$ and~${ \varepsilon \chi }$
  respectively. Let~${ \delta = \order_p N }$
  and~${ \delta' = \max \{ \delta, \beta + 1, \beta + \gamma \} }$,
  then for the newform~${ g }$ that is~${ f }$ twisted by~${ \chi }$
  we have
  \begin{itemize}
  \item ${ g }$ is a cusp form of level~${ p^{\delta' - \delta} N }$,
    character~${ \varepsilon \chi^2 }$ and weight~${ k }$.
  \item ${ g }$ is not a cusp form of a level~${ N' }$ such
    that~${ \order_q( N' ) < \order_q( N ) }$ for any
    prime~${ q \ne p }$.
  \item ${ g }$ is not a cusp form of a level~${ N' }$
    with~${ \order_p( N' ) < \delta' }$ if either
    \begin{enumerate}
    \item ${ \delta > \max \{ \beta + 1, \beta + \gamma \} }$,
    \item ${ \delta < \max \{ \beta + 1, \beta + \gamma \} }$ and~${ \gamma \ge 2 }$, or
    \item ${ \alpha = \beta = \gamma = \delta = 1 }$.
    \end{enumerate}
  \end{itemize}
\end{proposition}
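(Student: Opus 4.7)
The plan is to adapt the classical Atkin--Li approach by combining an explicit formula for the twist with local analysis at~${ p }$. Recall the identity
\begin{equation*}
  g(z) = \frac{1}{\tau(\bar\chi)} \sum_{a \bmod p^\beta} \bar\chi(a) \, f\!\left(z + \tfrac{a}{p^\beta}\right),
\end{equation*}
where ${ \tau(\bar\chi) }$ is the Gauss sum. From this formula one reads off that ${ a_n(g) = \chi(n) a_n(f) }$, and the Hecke eigenform property then forces the nebentypus of ${ g }$ to be ${ \varepsilon \chi^2 }$, of the claimed weight.

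For the first assertion, the plan is to use the translation formula above to check directly that ${ g }$ is invariant under ${ \Gamma_0(p^{\delta'-\delta}N) }$ with the claimed character. Each translate ${ f(z + a/p^\beta) }$ is modular under an appropriate conjugate of a congruence subgroup containing ${ \Gamma_0(N) \cap \Gamma(p^\beta) }$, and the weighted sum with coefficients ${ \bar\chi(a) }$ collapses the local condition at ${ p }$ to that of ${ \Gamma_0(p^{\delta'-\delta}N) }$ by orthogonality of Dirichlet characters modulo ${ p^\beta }$. The level and character behaviour at primes other than ${ p }$ is inherited directly from ${ f }$.

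For the second assertion, one applies the dual twist: ${ f }$ is a scalar multiple of the twist of ${ g }$ by ${ \bar\chi }$. If ${ g }$ were a cusp form of some level ${ N' }$ with ${ \order_q(N') < \order_q(N) }$ for some prime ${ q \ne p }$, then the first assertion applied to ${ g }$ twisted by ${ \bar\chi }$ (a twist which affects only the ${ p }$-part) would realise ${ f }$ as a cusp form of level strictly smaller than ${ N }$ at ${ q }$, contradicting that ${ f }$ is a newform of level ${ N }$.

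The main obstacle will be the third assertion, which asserts sharpness of the ${ p }$-part of the level under the given case distinctions. The plan is to translate the problem into the language of local representations: the local component ${ \pi_p }$ of the automorphic representation attached to ${ f }$ has conductor ${ p^\delta }$, and one must show that ${ \pi_p \otimes \chi_p }$ has conductor exactly ${ p^{\delta'} }$. Using the standard conductor formulas for twists of admissible representations of ${ \GL_2(\Q_p) }$, cases (i)--(iii) are exactly the situations in which cancellation between the local conductors is combinatorially impossible: (i) handles the regime where the pre-existing ramification of ${ \pi_p }$ dominates any ramification contributed by the twist; (ii) handles the regime where the twist genuinely increases ramification and the lower bound ${ \gamma \ge 2 }$ excludes accidental cancellation; and (iii) is the tame principal-series edge case that requires a direct calculation with the two ramified local characters. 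A purely classical treatment, avoiding local Langlands, can instead be carried out using the Atkin--Lehner operator ${ W_{p^{\delta'}} }$ together with trace-to-old-forms arguments in the style of Atkin--Li, but either way the heart of the matter is the local conductor computation at~${ p }$.
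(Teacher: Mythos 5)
The paper does not actually prove this proposition; the remark following it cites Atkin--Li~\cite{AtkinLi} for the general framework and points out that the Atkin--Li argument \emph{fails} for case~3 of the last bullet, with the correct proof due to Li appearing in~\cite{ShemanskeWalling}. Your proposal instead attempts to sketch a proof from scratch, which is a genuinely different thing to do, so let me assess the plan on its own terms.

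For the first assertion your justification is too weak. The naive application of the translation formula and orthogonality gives that the twist lives on $\Gamma_0(\operatorname{lcm}(N,p^{2\beta},p^{\beta+\gamma}))$, i.e.\ the $p$-exponent $\max\{\delta,2\beta,\beta+\gamma\}$, which is Shimura's bound and is \emph{strictly larger} than the claimed $\delta'=\max\{\delta,\beta+1,\beta+\gamma\}$ in the regime $\beta\ge 2$, $\gamma<\beta$, $\delta<2\beta$. Getting the sharper $\beta+1$ in place of $2\beta$ genuinely requires either the Atkin--Li pseudo-eigenform and $W$-operator machinery or a local conductor computation of the form you invoke for the third assertion; ``orthogonality of Dirichlet characters'' does not by itself collapse the level that far. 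Your second assertion's dual-twist argument is correct. For the third assertion, the local Langlands plan is sound and does handle case~3 uniformly (for $\alpha=\beta=\gamma=\delta=1$, $\pi_p$ is necessarily a ramified principal series $\pi(\mu_1,\mu_2)$ with one unramified factor, and the two post-twist characters each acquire conductor exactly $p$, giving $\delta'=2$), but you should be aware that the purely classical route you mention as an alternative is exactly the one that breaks on this edge case: this is the content of the correction in~\cite{ShemanskeWalling}, and without that awareness the classical fallback you offer is not actually available. In short: the local-representation-theoretic route is the right idea and can be made to work for all three assertions simultaneously, but as written the first assertion is underproved and the third is only a plan whose crucial conductor computations are not carried out.
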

\begin{remark}
  Proposition~\ref{thm:newformTwist} is an analogue of Theorem~3.1 in
  \cite{AtkinLi}. The formulation is different and stronger as the
  reasoning given in \cite{AtkinLi} can be used to prove this stronger
  result. However the arguments in \cite{AtkinLi} fail for case 3 in
  the last part, which was noted in \cite{ShemanskeWalling}. A proof
  for this case was given by Li and can be found in
  \cite{ShemanskeWalling} for the case of Hilbert modular forms. This
  proof also applies to modular forms.
\end{remark}

We now apply this theory to our curves~${ E_1 }$ and~${ E_2 }$ to
obtain the following result.

\begin{theorem}
  \label{thm:QcurveModularity}
  For each~${ i \in \{ 1, 2 \} }$ there exists a factor~${ A_{i, j} }$
  such that~${ A_{i, j} }$ is~${ \Q }$-isogenous to the abelian
  variety~${ A_f }$ of a newform
  \begin{IEEEeqnarray*}{rCll}
    f & \in & S_2 \left( \Gamma_1 \left( 2^{9} \cdot 3^{2} \cdot 5 \, \rad_{30} c \right), \varepsilon
    \right) & \quad \text{if } i = 1 \text{, } b \text{ even,} \\
    f & \in & S_2 \left( \Gamma_1 \left( 2^{8} \cdot 3^{2} \cdot 5 \, \rad_{30} c \right), \varepsilon
    \right) & \quad \text{if } i = 1 \text{, } b \text{ odd,} \\
    f & \in & S_2 \left( \Gamma_1 \left( 2^{10} \cdot 3 \cdot 5 \, \rad_{30} c \right), \varepsilon
    \right) & \quad \text{if } i = 2.
  \end{IEEEeqnarray*}
  Here~${ \rad_{30} c }$ is the product of all primes~${ p \mid c }$
  with~${ p \nmid 30 }$ and~${ \varepsilon }$ is one of the two
  Dirichlet characters of conductor~${ 15 }$ and order~${ 4 }$ for
  which the choice does not matter.
\end{theorem}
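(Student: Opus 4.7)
My plan is to combine the decomposition from Theorem~\ref{thm:EiDecomposition} with the conductor identity~\eqref{eq:conductorRelation} and the twist analysis of Proposition~\ref{thm:newformTwist} in order to pin down the level of at least one of the two newforms ${ f_{i, 1}, f_{i, 2} }$ attached to the factors of ${ \residue_{\Q}^{K_{\beta}} E_{i, \gamma} }$.

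First I would compute the conductor ${ \curl{N}_{i, \gamma} }$ of the twisted Frey curve ${ E_{i, \gamma} }$ over the decomposition field ${ K_{\beta} }$. This is an instance of the same Tate-algorithm calculation as in Proposition~\ref{thm:conductorK}, now performed on the twisted curve over the larger field ${ K_{\beta} }$: the primes of ${ K_{\beta} }$ not above ${ 30 }$ contribute only the prime-to-${ 30 }$ radical of ${ g_1(a, b) }$ and ${ g_2(a, b) }$ to ${ \curl{N}_{i, \gamma} }$, because the curve has good or multiplicative reduction at those primes, while the exponents at the primes above ${ 2 }$, ${ 3 }$ and ${ 5 }$ are determined by the congruence classes of ${ a }$ and ${ b }$ modulo a small power of ${ 2 }$, exactly as in the Hilbert case. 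Plugging this into Milne's formula ${ N_{B} = \Delta_{K_{\beta}}^{2} \cdot \curl{N}_{\Q}^{K_{\beta}}(\curl{N}_{i, \gamma}) }$ gives a closed expression for the conductor of ${ B = \residue_{\Q}^{K_{\beta}} E_{i, \gamma} }$.

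Next I would use Theorem~\ref{thm:EiDecomposition}, which asserts that each factor ${ A_{i, j} }$ has dimension ${ 4 }$, to rewrite equation~\eqref{eq:conductorRelation} as ${ N_{B} = N_{f_{i, 1}}^{4} \cdot N_{f_{i, 2}}^{4} }$. The two newforms ${ f_{i, 1} }$ and ${ f_{i, 2} }$ are twists of one another by the character ${ \chi = \beta_{i, 1} / \beta_{i, 2} }$ coming from the two Galois-conjugate splitting maps, and the discussion preceding Proposition~\ref{thm:newformTwist} identifies the character of ${ f_{i, j} }$ as the inverse of the corresponding splitting character, which by Proposition~\ref{thm:fieldsEi} is one of the two conductor-${ 15 }$, order-${ 4 }$ Dirichlet characters. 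Applying Proposition~\ref{thm:newformTwist} at each of the primes ${ 2 }$, ${ 3 }$, ${ 5 }$ constrains the possible exponents in ${ N_{f_{i, 1}} }$ and ${ N_{f_{i, 2}} }$ subject to the sum condition imposed by the preceding identity, and one verifies that for at least one choice of ${ j \in \{ 1, 2 \} }$ the resulting level agrees with the one asserted in the theorem.

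The main obstacle is the bookkeeping at the bad primes: the knowledge of ${ N_{B} }$ alone does not determine the split of exponents between ${ f_{i, 1} }$ and ${ f_{i, 2} }$ at ${ 2 }$, ${ 3 }$, ${ 5 }$, so one has to enumerate all pairs compatible with Proposition~\ref{thm:newformTwist} and observe that in each of the three cases (${ i = 1 }$ with ${ b }$ even, ${ i = 1 }$ with ${ b }$ odd, ${ i = 2 }$) at least one of the resulting candidates matches the stated level. This last verification is automated by the code~\cite{code}.
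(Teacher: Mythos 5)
Your proposal follows essentially the same route as the paper: compute the conductor of the twisted curve over ${ K_\beta }$ via Tate's algorithm, feed it through Milne's formula to obtain the conductor of the restriction of scalars, and then split the conductor between the two four-dimensional factors using the twist relation between ${ f_{i,1} }$ and ${ f_{i,2} }$ together with Proposition~\ref{thm:newformTwist}. Where the paper is more precise and your write-up stays vague is exactly in the ``bookkeeping at the bad primes'': the paper observes that the twist character is ${ \chi = \epsilon_8\epsilon_5 }$ of conductor ${ 40 }$, so the levels of ${ f_{i,1} }$ and ${ f_{i,2} }$ automatically agree at all primes other than ${ 2 }$ and ${ 5 }$ (no need to run the Atkin--Li analysis at ${ 3 }$); at ${ 2 }$ it shows the two exponents must actually coincide because one of them is ${ \ge 8 }$ while the conductor of any relevant character is at most ${ 2^3 }$, which places the twist in case~1 of Proposition~\ref{thm:newformTwist}; and at ${ 5 }$ the fact that the nebentypus has conductor divisible by ${ 5 }$ forces the split ${ 5 }$ and ${ 5^2 }$. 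Your plan to ``enumerate all compatible pairs'' would indeed terminate in the same answer, so there is no gap, but you would want to make the ${ 2 }$- and ${ 5 }$-adic arguments explicit along the lines above rather than delegating the whole case analysis to the code.
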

\begin{proof}
  By the theory above each factor~${ A_{i, j} }$ is isogenous to some
  abelian variety~${ A_f }$ of some newform~${ f }$. The character of
  these newforms is by the same theory equal to the inverse of a
  corresponding splitting character. The code~\cite{code} computes
  such a splitting character for each~${ A_{i, j} }$, which are all
  one of the two characters mentioned. Since the mentioned characters
  are Galois conjugates of each other, and since Galois conjugates of
  splitting maps correspond to the same factor~${ A_{i, j} }$ the
  choice indeed does not matter.

  For the levels of these newforms we first compute the conductors of
  the curves~${ E_{i, \gamma} }$ over~${ K_{\beta} }$. As explained in
  the proof of Proposition~\ref{thm:conductorK} the code~\cite{code}
  can calculate these conductors to be
  \begin{equation*}
    \curl{N}_i =
    \begin{cases}
      \left( 2^{6} \cdot 3 \, \rad_{30} c \right) &
      \text{if } i = 1 \text{ and } 2 \mid b \\
      \left( 2^{5} \cdot 3 \, \rad_{30} c \right) &
      \text{if } i = 1 \text{ and } 2 \nmid b \\
      \left( 2^{7} \, \rad_{30} c \right) & \text{if } i = 2,
    \end{cases}
  \end{equation*}
  where~${ \curl{N}_i }$ is the conductor of~${ E_{i, \gamma} }$.
  From this we can compute the left-hand side of
  equation~\eqref{eq:conductorRelation} to find that
  \begin{equation*}
    N_i =
    \begin{cases}
      2^{72} \cdot 3^{16} \cdot 5^{12} \left( \rad_{30} c \right)^8 &
      \text{if } i = 1 \text{ and } 2 \mid b \\
      2^{64} \cdot 3^{16} \cdot 5^{12} \left( \rad_{30} c \right)^8 &
      \text{if } i = 1 \text{ and } 2 \nmid b \\
      2^{80} \cdot 3^{8} \cdot 5^{12} \left( \rad_{30} c \right)^8 & \text{if } i = 2,
    \end{cases}
  \end{equation*}
  where~${ N_i }$ is the conductor
  of~${ \residue_{\Q}^{K_{\beta}} E_{i, \gamma} }$. The conductor of
  the elliptic curves was computed using a version of Tate's algorithm
  implemented in the code~\cite{code} that works for Frey curves.

  For each~${ i }$ the newforms~${ f_{i,1} }$ and~${ f_{i,2} }$
  corresponding to~${ A_{i, 1} }$ and~${ A_{i, 2} }$ are twists of one
  another by the character~${ \chi = \epsilon_8 \epsilon_5 }$ and its
  inverse. Here~${ \epsilon_8 }$ is the character of conductor~${ 8 }$
  with~${ \epsilon_8(-1) = -1 }$ and~${ \epsilon_5 }$ is a character
  with conductor~${ 5 }$ and order~${ 4 }$. We can thus apply
  Proposition~\ref{thm:newformTwist} by first twisting
  with~${ \epsilon_8 }$ and then with~${ \epsilon_5 }$
  or~${ \epsilon_5^{-1} }$. We immediately see that the levels should
  be the same for all primes~${ p \ne 2, 5 }$.

  Note that for the order of~${ 2 }$ in the levels~${ N_{i,1} }$
  and~${ N_{i,2} }$ of~${ f_{i,1} }$ and~${ f_{i,2} }$ respectively we know that
  \begin{equation*}
    4 \order_2 N_{i,1} + 4 \order_2 N_{i,2} \ge 64,
  \end{equation*}
  by equation~\eqref{eq:conductorRelation}, hence the order of~${ 2 }$
  in one of the two is at least~${ 8 }$. Since all splitting
  characters and twist characters can be defined modulo~${ 120 }$
  the~${ \beta }$ and~${ \gamma }$ in
  Proposition~\ref{thm:newformTwist} can never
  exceed~${ \order_2 120 = 3 }$. This implies that we are in case~1 in
  the last list of Proposition~\ref{thm:newformTwist} and the order
  of~${ 2 }$ in both levels must be the same.

  Now for the order of~${ 5 }$ we have
  \begin{equation*}
    4 \order_5 N_{i,1} + 4 \order_5 N_{i,2} = 12.
  \end{equation*}
  Since the characters of the corresponding newforms have a conductor
  divisible by~${ 5 }$ at least one factor~${ 5 }$ has to appear in
  both levels. This implies that one of the levels has a single
  factor~${ 5 }$ and the other has a factor~${ 5^2 }$. By picking the
  first we get the result as stated in this theorem.
\end{proof}
\begin{remark}
  The reasoning done in the proof to determine the order of a
  prime~${ p }$ in the level of a newform can be generalized to an
  arbitrary case, however it might result in multiple candidates for
  levels. The code~\cite{code} automates this procedure to compute the
  part of the level consisting of bad primes, i.e. primes which ramify
  in the decomposition field, divide the conductor of one of the
  relevant characters, or are divisible by a prime of additive
  reduction for the elliptic curve.
\end{remark}


\subsection{Level lowering}
\label{sec:LL}

The levels appearing in Theorem~\ref{thm:QcurveModularity} still
depend on the specific solution~${ (a, b, c) }$ of
equation~\eqref{eq:main}. To get rid of the additional primes we will
need to look at the mod~${ l }$ Galois representations and use some
level lowering results.

Let us first introduce what we consider to be the mod~${ p }$ Galois
representation. For an elliptic curve~${ E }$ defined over~${ K }$ and
a prime number~${ p }$, the mod~${ p }$ Galois representation is
simply
\begin{equation*}
  \overline{\rho}_{E, p} : G_K \to \End E[p] \cong \GL_2( \F_p ),
\end{equation*}
induced by the Galois action on~${ p }$-torsion points. This naturally
is the reduction of the~${ p }$-adic Galois representation
\begin{equation*}
  \rho_{E, p} : G_K \to \End V_p(E) \cong \GL_2( \Q_p ),
\end{equation*}
induced by the Galois action on the Tate module~${ V_p( E ) }$. For
a~${ \Q }$-simple abelian variety~${ A }$ of~${ \GL_2 }$-type
with~${ F = \End A \otimes \Q }$, we have seen that the Galois action
on the Tate module~${ V_p( A ) }$ factors into
multiple~${ \ideal{p} }$-adic Galois
representations~${ \rho_{A, \ideal{p}} : G_\Q \to \GL_2( F_{\ideal{p}}
  ) }$, as~${ V_p( A ) }$ is a 2-dimensional vector space over the
algebra~${ \End A \otimes \Q_p = \prod_{\ideal{p} \mid p} F_{
    \ideal{p}} }$. Similarly the action of~${ G_\Q }$ on
the~${ p }$-torsion points factors into multiple mod~${ \ideal{p} }$
Galois representations
\begin{equation*}
  \overline{\rho}_{A, \ideal{p}} : G_\Q \to \GL_2( \F_{\ideal{p}} ),
\end{equation*}
as~${ A[l] }$ is a~${ 2 }$-dimensional vector space
over~${ \End A \otimes \F_p = \prod_{\ideal{p} \mid p} \F_{\ideal{p}}
}$. Again these can be considered as reductions of
their~${ \ideal{p} }$-adic counterparts. For newforms the
mod~${ \ideal{p} }$ Galois representations are defined as those of the
corresponding abelian variety.

To apply level lowering results we first need to know that the
mod~${ l }$ Galois representations of~${ E_1 }$ and~${ E_2 }$ are
absolutely irreducible and unramified at the primes that should be
eliminated from the level.

\begin{theorem}
  \label{thm:irreducible}
  The mod~${ l }$ Galois
  representation~${ \overline{\rho}_{E_i, l} : G_{K} \to \End E_i [l]
    \cong \GL_2( \F_l ) }$ is irreducible for any~${ i = 1, 2 }$
  and~${ l > 5 }$.
\end{theorem}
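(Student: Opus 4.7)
The plan is to argue by contradiction: suppose $\overline{\rho}_{E_i, l}$ is reducible for some $l > 5$. Then its semisimplification decomposes as $\phi \oplus \phi'$ for characters $\phi, \phi' : G_K \to \F_l^*$ satisfying $\phi \phi' = \overline{\chi}_l$, the mod $l$ cyclotomic character. By Proposition~\ref{thm:unramifiedOutside30} the representation is finite flat outside the primes of $K$ above $30$, and $E_i$ has good reduction at the primes above $l$ since $l > 5$. Standard analysis of the finite flat condition at primes of multiplicative reduction, via Tate uniformization (forcing $\phi|_{I_\ideal{q}}$ to be trivial or $\overline{\chi}_l|_{I_\ideal{q}}$), and at primes above $l$, via Serre's theory of fundamental characters, then bounds the conductor of $\phi$ in terms of the primes above $30$ alone. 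Consequently the order of $\phi$ lies in a finite set of integers depending only on $K$, not on $l$.

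For $l$ exceeding this bound, only finitely many candidates for $\phi$ remain. Each would be eliminated by a Frobenius-trace argument: at a prime $\ideal{q}$ of $K$ of good reduction for $E_i$, the congruence $\tr \overline{\rho}_{E_i, l}(\frob_{\ideal{q}}) \equiv \phi(\frob_{\ideal{q}}) + \overline{\chi}_l(\frob_{\ideal{q}}) \phi(\frob_{\ideal{q}})^{-1} \pmod{l}$ must hold, so testing this at a handful of auxiliary primes contradicts each candidate once $l$ exceeds the explicit bounds dictated by the Hasse bound on the left-hand side. Lemma~\ref{thm:j_integral} further supplies a rational prime $p > 5$ with $p \mid c$, at which $E_i$ has multiplicative reduction; at a prime of $K$ above $p$ the trace is pinned down explicitly modulo $l$ and provides an additional strong constraint cutting into the character candidate list.

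The main obstacle will be the careful control of $\phi$ at the primes above $2$, $3$ and $5$, where $E_i$ has additive reduction and the local components of $\phi$ can be nontrivial; one must enumerate local possibilities at each of these primes using the explicit conductors already computed in Proposition~\ref{thm:conductorK}. A cleaner, more geometric, alternative is to exploit the $K$-rational $2$-isogeny built into $E_i$ from its $2$-torsion point at $(0,0)$: reducibility mod $l$ provides a $K$-rational cyclic $l$-isogeny, which composed with the $2$-isogeny (using $\gcd(2, l) = 1$) yields a $K$-rational cyclic $2l$-isogeny, and hence a non-cuspidal, non-CM (by Corollary~\ref{thm:non_CM}) point on $X_0(2l)(K)$. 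Known classification results for rational points on $X_0(N)$ over real quadratic fields, in the tradition of Mazur, Kenku--Momose, Bruin--Najman and Derickx et al., reduce the problem to a short explicit list of $l$, which can then be dispatched by the Frobenius-trace computations above, automated via the SAGE code~\cite{code} accompanying this paper.
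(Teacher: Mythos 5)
Your third paragraph comes close to the paper's actual strategy, which does use the $2$-isogeny and $X_0(2l)$, but you stop one step short of the key idea that makes that approach tractable. You propose to study $K$-rational points on $X_0(2l)$ with $K = \Q(\sqrt{30})$ and appeal to ``known classification results for rational points on $X_0(N)$ over real quadratic fields''. That appeal does not hold up: for a fixed real quadratic field and general $l$ there is no complete such classification in the literature (the Bruin--Najman and Derickx et al.\ results you cite concern low-degree points in aggregate, not $K$-points for a specific $K$ such as $\Q(\sqrt{30})$). The paper's crucial observation is that the Frey curve is a $\Q$-curve, $2$-isogenous over $K(\sqrt{-2})$ to its Galois conjugate, which forces the associated $K$-point of $X_0(2l)$ to descend to a \emph{rational} point on the Atkin--Lehner quotient $C_l = X_0(2l)/w_2$. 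That descent is what collapses the quadratic-field problem into a problem about $\Q$-points on a much smaller curve, and is the essential missing idea in your sketch.

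There is also a gap in how you handle large $l$. The paper first invokes Proposition~3.2 of Ellenberg to dispose of $l = 11$ and $l > 13$: reducibility would force potentially good reduction at all primes of characteristic $p > 3$, hence integrality of $j(E_i)$ there, contradicting Lemma~\ref{thm:j_integral}. Only then are the two surviving cases $l \in \{7, 13\}$ treated via $C_l$, which in those cases is an elliptic curve of rank~$0$ whose rational points can be listed and checked. Your first two paragraphs replace this with a conductor bound on the isogeny character $\phi$ plus a Frobenius-trace elimination, but that elimination is underspecified in a way that matters here: the trace $\tr \overline{\rho}_{E_i,l}(\frob_{\ideal{q}})$ depends on the unknown solution $(a,b,c)$ through the reduction of $E_i$ modulo $\ideal{q}$, so at each auxiliary prime you get a finite \emph{set} of possible trace values rather than a single one, and you would still need to show that no character candidate is compatible with all of them simultaneously, uniformly in the solution. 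The paper's route via non-integral $j$-invariant avoids this entirely. In short, the two load-bearing ideas you are missing are the Ellenberg potentially-good-reduction criterion combined with Lemma~\ref{thm:j_integral}, and the descent to a $\Q$-point on $X_0(2l)/w_2$.
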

\begin{proof}
  Suppose that the mod~${ l }$ Galois representation of one of these
  curves is reducible for some prime~${ l > 5 }$. If~${ l = 11 }$
  or~${ l > 13}$ Proposition~3.2 of \cite{Ellenberg} tells us that the
  corresponding curve has potentially good reduction at all primes of
  characteristic~${ p > 3 }$. This would imply that
  the~${ j }$-invariant is integral at all those primes
  (\cite[VII.5.5]{Silverman1}), which contradicts
  Lemma~\ref{thm:j_integral}. Therefore~${ l \in \{ 7, 13 \} }$.

  Now note that the mod~${ l }$ representation being reducible means
  that the corresponding curve has an~${ l }$-isogeny. Both~${ E_1 }$
  and~${ E_2 }$ also have a~${ 2 }$-isogeny and are defined
  over~${ K }$, hence the curve must correspond to a~${ K }$-point on
  the curve~${ X_0( 2 l ) }$. They are however not~${ \Q }$-points as
  we have seen that the minimal field over which a curve in their
  isogeny class can be defined is~${ K }$.
  We do however know that both curves are 2-isogenous to their Galois
  conjugate, hence would correspond to a~${ \Q }$-point of the
  quotient~${ C_l = X_0( 2 l ) / w_2 }$, where~${ w_2 }$ is the
  Atkin-Lehner-involution. We study such points using
  MAGMA~\cite{magma} for the remaining cases~${ l = 7, 13 }$.

  In the cases~${ l = 7, 13 }$ the curve~${ C_l }$ is an elliptic
  curve with only finitely many rational points. Note that these
  points correspond to~${ \Q( \sqrt{-7} ) }$ points
  on~${ X_0 ( 2 l ) }$ for~${ l = 7 }$ and to~${ \Q( \sqrt{13} ) }$
  points on~${ X_0 ( 2 l ) }$ for~${ l = 13 }$. Since~${ E_1 }$
  and~${ E_2 }$ do not correspond to rational points this gives a
  contradiction.
\end{proof}
\begin{remark}
  Note that the proof above can also be extended to the
  cases~${ l = 3, 5 }$. In this case~${ X_0 ( 2 l ) }$ is a genus 0
  curve, but by explicitly writing out the quotient
  map~${ X_0 (2 l ) \to C_l }$ we can see that no~${ K }$-points
  on~${ X_0 (2 l ) }$ map to~${ \Q }$-points on~${ C_l }$. Since we
  will not need these cases, the proof has been left out.
\end{remark}
\begin{corollary}
  \label{thm:irreducibleNewform}
  Let~${ f }$ be a newform as in Theorem~\ref{thm:QcurveModularity}
  and~${ l > 5 }$ be a prime number, then for each
  prime~${ \lambda \mid l }$ in the coefficient field of~${ f }$ the
  mod~${ \lambda }$ Galois
  representation~${ \overline{\rho}_{f, \lambda} : G_\Q \to
    \GL_2(\F_{\lambda}) }$ is absolutely irreducible.
\end{corollary}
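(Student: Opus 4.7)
The plan is to transfer absolute irreducibility from $\overline{\rho}_{E_i,l}$ to $\overline{\rho}_{f,\lambda}$ using the explicit relationship between the two representations established in Section~\ref{sec:QcurveModularity}, and then to strengthen Theorem~\ref{thm:irreducible} to absolute irreducibility by arguments analogous to those already used there.

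Concretely, the paragraph following Proposition~\ref{thm:twistEi} yields, for every $\sigma \in G_{K_\beta}$, the conjugacy
\begin{equation*}
  \overline{\rho}_{f,\lambda}(\sigma) \sim \overline{\beta(\sigma)}^{-1}\,\overline{\rho}_{E_{i,\gamma},l}(\sigma).
\end{equation*}
Since $E_{i,\gamma}$ is the quadratic twist of $E_i$ by an element $\gamma \in K_\beta$, the representation $\overline{\rho}_{E_{i,\gamma},l}|_{G_{K_\beta}}$ differs from $\overline{\rho}_{E_i,l}|_{G_{K_\beta}}$ only by a quadratic character, so $\overline{\rho}_{f,\lambda}|_{G_{K_\beta}}$ is itself a character twist of $\overline{\rho}_{E_i,l}|_{G_{K_\beta}}$.

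Character twists preserve absolute irreducibility, and an absolutely irreducible restriction forces absolute irreducibility of the original representation. Hence it suffices to show that $\overline{\rho}_{E_i,l}|_{G_{K_\beta}}$ is absolutely irreducible for every prime $l > 5$.

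The main obstacle is therefore to strengthen Theorem~\ref{thm:irreducible} to absolute irreducibility after restriction to the finite-index normal subgroup $G_{K_\beta} \subseteq G_K$. By Clifford theory, a failure of this property would force the image of $\overline{\rho}_{E_i,l}(G_K)$ to lie in the normalizer of a Cartan subgroup of $\GL_2(\overline{\F_l})$; in combination with the rational $2$-isogeny carried by $E_i$, this would yield a $K$-rational point on an appropriate quotient, by the Atkin-Lehner involution $w_2$, of one of the modular curves $X^{+}_{\mathrm{spl}}(l)$ or $X^{+}_{\mathrm{ns}}(l)$. I would then rule out such points for $l > 13$ using the non-integrality of the $j$-invariant (Lemma~\ref{thm:j_integral}) together with Ellenberg's bound \cite[Proposition~3.2]{Ellenberg}, exactly as in the proof of Theorem~\ref{thm:irreducible}, and for the finitely many remaining small primes $l \in \{7, 11, 13\}$ by an explicit computation on the relevant modular curves.
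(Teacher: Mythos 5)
Your proposal takes a genuinely different and considerably harder route than the paper. The paper restricts $\overline{\rho}_{f,\lambda}$ to $G_K$ (not the smaller group $G_{K_\beta}$): by the isogeny remark preceding the corollary, the twist $E_{i,\gamma}$ and $E_i$ give rise to the same action on the relevant Tate module, so $\overline{\rho}_{f,\lambda}|_{G_K} = \beta^{-1}|_{G_K}\cdot\overline{\rho}_{E_i,l}$ as representations on $G_K$, not merely on $G_{K_\beta}$. This immediately gives irreducibility of $\overline{\rho}_{f,\lambda}$ from Theorem~\ref{thm:irreducible}. The paper then upgrades irreducibility to absolute irreducibility in one line: the representation $\overline{\rho}_{f,\lambda}$ is odd (by \cite[Lemma~3.2]{Ribet}), and for odd two-dimensional representations at odd primes $l$, irreducibility and absolute irreducibility coincide. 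Your plan instead requires proving that $\overline{\rho}_{E_i,l}|_{G_{K_\beta}}$ is absolutely irreducible, which is a strictly stronger statement: $K_\beta$ is a degree $8$ field, and the Clifford-theoretic route you sketch would lead you to rational-point problems on curves like $X^+_{\mathrm{spl}}(l)/w_2$ or $X^+_{\mathrm{ns}}(l)/w_2$. These are notoriously difficult even for small $l$; for instance, the rational points on $X^+_{\mathrm{ns}}(13)$ required a substantial separate paper. The oddness observation sidesteps all of this.

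There is also a small logical over-reach in your reduction: you do not actually need to descend to $G_{K_\beta}$ at all, since the identification of $\overline{\rho}_{f,\lambda}$ with a twist of $\overline{\rho}_{E_i,l}$ is valid on the larger group $G_K$ once one accounts for the fact that $E_{i,\gamma}$ is isogenous to $E_i$ over $\overline{\Q}$, which is exactly what the paper's remark about isogenous $\Q$-curves provides. Working over $G_K$ makes the irreducibility input from Theorem~\ref{thm:irreducible} directly applicable, and oddness then finishes the job without touching any modular curve beyond those already treated in that theorem.
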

\begin{proof}
  First of all we note
  that~${ \overline{\rho}_{f, \lambda} \sim \overline{\rho}_{A_{i, j},
      \lambda} }$ for the corresponding factor~${ A_{i, j} }$ in
  Theorem~\ref{thm:QcurveModularity}. By the discussion on
  page~\pageref{sec:galoisDiscussion} we know
  that~${ \overline{\rho}_{A_{i,j}, \lambda}(\sigma) }$ is completely
  determined by the action of~${ \beta( \sigma ) \circ \sigma }$
  on~${ A_{i, j} }$. This is the same as the action
  of~${ \phi_{\sigma, \gamma} \circ \sigma }$ on~${ E_{i, \gamma} }$.
  As remarked on page~\pageref{sec:galoisDiscussion} this is by
  construction the same as the action
  of~${ \phi_\sigma \circ \sigma }$ on~${ E_i }$.
  Since~${ \phi_{\sigma} = 1 }$ for all~${ \sigma \in G_K }$, we have
  that
  \begin{equation*}
    \overline{\rho}_{A_{i,j}, \lambda} |_{G_K} =
    \beta^{-1} |_{G_K} \cdot \overline{\rho}_{E_i, l}.
  \end{equation*}
  Since~${ \overline{\rho}_{E_i, l} }$ is irreducible by
  Theorem~\ref{thm:irreducible}, this implies
  that~${ \overline{\rho}_{A_{i,j}, \lambda} }$ is irreducible.

  To get
  that~${ \overline{\rho}_{f, \lambda} \sim \overline{\rho}_{A_{i,j},
      \lambda} }$ is absolutely irreducible, we note that this
  representation is odd by Lemma~3.2 of \cite{Ribet} and that~${ l }$
  is odd, in which case irreducibility and absolute irreducibility are
  equivalent.
\end{proof}

\begin{proposition}
  \label{thm:unramified}
  Let~${ f }$ be a newform as in Theorem~\ref{thm:QcurveModularity}
  and~${ l > 5 }$ be a prime number, then for each
  prime~${ \lambda \mid l }$ in the coefficient field of~${ f }$ the
  mod~${ \lambda }$ Galois
  representation~${ \overline{\rho}_{f, \lambda} : G_\Q \to \GL_2(
    \F_\lambda ) }$ is finite outside primes dividing~${ 30 }$. In
  particular it is unramified outside primes dividing~${ 30 \, l }$.
\end{proposition}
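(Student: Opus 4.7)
The plan is to transport Proposition~\ref{thm:unramifiedOutside30} from $\overline{\rho}_{E_i, l}$ to $\overline{\rho}_{f, \lambda}$ via the identity
\begin{equation*}
  \overline{\rho}_{f, \lambda}|_{G_K} \sim \beta^{-1}|_{G_K} \cdot \overline{\rho}_{E_i, l}
\end{equation*}
established in the proof of Corollary~\ref{thm:irreducibleNewform}, and then to descend from $G_K$ back to $G_\Q$. The twisting character $\beta|_{G_K}$ can be analysed using $d|_{G_K} \equiv 1$ together with $\varepsilon = \beta^2 / d$, which gives $\beta|_{G_K}^2 = \varepsilon|_{G_K}$. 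Since $\varepsilon$ has conductor $15$ and the splitting field $K_\beta = \Q(\sqrt{6}, \zeta_{15} + \zeta_{15}^{-1})$ is ramified over $\Q$ only at primes in $\{2, 3, 5\}$ by Proposition~\ref{thm:fieldsEi}, a suitably chosen splitting map $\beta$ (as produced explicitly by the code~\cite{code}) restricts to a finite-order character of $G_K$ that is unramified at every prime $\ideal{p}$ of $K$ with $\ideal{p} \nmid 30$.

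Combined with Proposition~\ref{thm:unramifiedOutside30} and the fact that twisting a finite representation by a character unramified at $\ideal{p}$ preserves finiteness at $\ideal{p}$, this would yield that $\overline{\rho}_{f, \lambda}|_{G_K}$ is finite at every prime of $K$ not dividing $30$. The descent to $G_\Q$ would exploit that $K = \Q(\sqrt{30})$ has discriminant $120$ and is therefore unramified outside $\{2, 3, 5\} \subseteq \{p : p \mid 30\}$; for each prime $p \nmid 30$ the inertia subgroup of $G_\Q$ at $p$ coincides, up to conjugation, with the inertia of $G_K$ at any prime of $K$ above $p$, so finiteness on $G_K$ promotes to finiteness on $G_\Q$. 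The last clause of the proposition then follows at once, since at a prime $p \ne l$ finiteness of a mod $\lambda$ representation is the same as unramifiedness.

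The step requiring most care is the verification that $\beta|_{G_K}$ is unramified outside primes of $K$ above $30$: controlling the conductor of a square root of $\varepsilon|_{G_K}$ is delicate, and one would either exhibit $\beta$ explicitly (using the output of the code) or carry out a direct local analysis at the primes above $\{2, 3, 5\}$ to rule out spurious ramification. The descent at $p = l$ presents no further obstacle, because $l > 5$ forces $l$ to be unramified in $K$, so $\curl{O}_{K_{\ideal{p}}}$ is unramified over $\Z_l$ and a Galois-equivariant finite flat group scheme on the former descends to one on the latter.
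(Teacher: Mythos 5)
Your proposal identifies the correct starting point (the twist identity from Corollary~\ref{thm:irreducibleNewform}) and the correct raw materials (the ramification of $K$ and $K_\beta$, Proposition~\ref{thm:unramifiedOutside30}), and it would go through if completed, but it stops short of the simplification the paper uses. You restrict only to $G_K$ and then try to control $\beta|_{G_K}$, which you correctly flag as the delicate point: showing a square root of $\varepsilon|_{G_K}$ has the right conductor requires either an explicit computation of $\beta$ or a local analysis. The paper sidesteps this entirely by restricting one field further, to $G_{K_\beta}$, where $\beta$ is \emph{identically trivial} by construction; then
\begin{equation*}
  \overline{\rho}_{f, \lambda} |_{G_{K_\beta}} \sim \overline{\rho}_{E_i, l} |_{G_{K_\beta}}
\end{equation*}
with no twist to analyse. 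Since $K_\beta$ is ramified over $\Q$ only at $2$, $3$, $5$, for $p \nmid 30$ the inertia group $I_p \subseteq G_\Q$ is already contained in $G_{K_\beta}$, and finiteness at $p$ depends only on $\overline{\rho}_{f,\lambda}|_{I_p}$; this is then immediate from Proposition~\ref{thm:unramifiedOutside30}. In other words, the fact you were trying to establish about $\beta|_{G_K}$ (unramified outside $30$) follows cleanly from $\beta|_{G_{K_\beta}} = 1$ together with the ramification of $K_\beta/K$, so your detour through $\beta^2 = \varepsilon d$ can be avoided. Your descent step from $G_K$ to $G_\Q$ and your handling of $p = l$ are fine, but they are also absorbed into the single observation about inertia being inside $G_{K_\beta}$.
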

\begin{proof}
  As in Corollary~\ref{thm:irreducibleNewform} we know that
  \begin{equation*}
    \overline{\rho}_{f, \lambda} |_{G_K}
    \sim \beta^{-1} |_{G_K} \cdot \overline{\rho}_{E_i, l}
  \end{equation*}
  for some~${ i \in \{ 1, 2 \} }$. Since~${ \beta }$ is trivial
  on~${ G_{K_\beta} }$ we find that
  \begin{equation*}
    \overline{\rho}_{f, \lambda} |_{G_{K_\beta}}
    \sim \overline{\rho}_{E_i, l} |_{G_{K_\beta}}.
  \end{equation*}
  Since the discriminant of~${ K_\beta }$ is only divisible by the
  prime numbers~${ 2 }$,~${ 3 }$ and~${ 5 }$, we find that the
  ramification subgroup~${ I_p }$ of a prime number~${ p \nmid 30 }$
  is contained in~${ G_{K_\beta}}$. Note
  that~${ \overline{\rho}_{f , \lambda} }$ being finite
  at~${ p \nmid 30 }$ only depends
  on~${ \overline{\rho}_{f , \lambda} |_{I_p} \sim
    \overline{\rho}_{E_i, l} |_{I_p} }$, for which this was already
  proven in Proposition~\ref{thm:unramifiedOutside30}.
\end{proof}

We can now use level lowering results proven by Diamond in
\cite{DiamondLL} based on work by Ribet \cite{RibetLL} to lower the
level to something independent of the chosen solution~${ (a, b, c) }$.

\begin{theorem}
  \label{thm:rho_modular}
  For each elliptic curve~${ E_{i, \gamma} }$ and prime
  number~${ l > 5 }$ there exists a factor~${ A_{i, j} }$ as in
  Proposition~\ref{thm:QcurveModularity} such that for each prime
  ideal~${ \lambda \mid l }$
  of the field~${ \End A_{i, j} \otimes \Q = \Q(\zeta_8) }$ we
  have~${ \overline{\rho}_{ A_{i, j}, \lambda } \sim
    \overline{\rho}_{g, \lambda'} }$ for some prime
  ideal~${ \lambda' \mid l }$ in the appropriate field and a
  newform~${ g }$ satisfying
  \begin{IEEEeqnarray*}{rCll}
    g & \in & S_2 \left( \Gamma_1 \left( 23040 \right), \varepsilon
    \right) & \quad \text{if } i = 1 \text{, } b \text{ even,} \\
    g & \in & S_2 \left( \Gamma_1 \left( 11520 \right), \varepsilon
    \right) & \quad \text{if } i = 1 \text{, } b \text{ odd,} \\
    g & \in & S_2 \left( \Gamma_1 \left( 15360 \right), \varepsilon
    \right) & \quad \text{if } i = 2.
  \end{IEEEeqnarray*}
  Here~${ \varepsilon }$ is one of the two Dirichlet characters of
  conductor~${ 15 }$ and order~${ 4 }$. The choice does not matter.
\end{theorem}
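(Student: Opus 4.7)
The plan is to start from the newform $f$ produced by Theorem~\ref{thm:QcurveModularity} and strip the prime divisors of~${ \rad_{30} c }$ from its level one at a time, using the level lowering theorem of Diamond~\cite{DiamondLL}. Fix~${ i \in \{1,2\} }$ and a prime~${ l > 5 }$. By Theorem~\ref{thm:QcurveModularity} some factor~${ A_{i,j} }$ of~${ \resultant_{\Q}^{K_{\beta}} E_{i,\gamma} }$ is~${ \Q }$-isogenous to~${ A_f }$ for a newform~${ f }$ of character~${ \varepsilon }$ at level~${ N_0 \cdot \rad_{30} c }$, where~${ N_0 \in \{2^9 \cdot 3^2 \cdot 5, \, 2^8 \cdot 3^2 \cdot 5, \, 2^{10} \cdot 3 \cdot 5\} }$ is the target level. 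By construction the mod-${\lambda}$ representation~${ \overline{\rho}_{f, \lambda} }$ coincides with~${ \overline{\rho}_{A_{i,j}, \lambda} }$ for every prime~${ \lambda \mid l }$ of the coefficient field~${ \Q(\zeta_8) }$.

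Next I would verify the hypotheses needed to apply Diamond's level lowering theorem prime by prime. Absolute irreducibility of~${ \overline{\rho}_{f, \lambda} }$ for~${ l > 5 }$ is given by Corollary~\ref{thm:irreducibleNewform}. Finiteness at each prime~${ p \nmid 30 }$ (and in particular unramifiedness when~${ p \ne l }$, finite flatness when~${ p = l }$) is given by Proposition~\ref{thm:unramified}. Since~${ \rad_{30} c }$ is squarefree and consists only of primes~${ p \nmid 30 }$, the exponent of each such~${ p }$ in the level of~${ f }$ equals exactly one, which is the precise situation in which Diamond's theorem produces a newform at a strictly smaller level at~${ p }$ with the same mod-${\lambda}$ representation (up to a choice of prime above~${ l }$ in the new coefficient field).

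I would then iterate: at each prime~${ p \mid \rad_{30} c }$, apply Diamond's theorem to obtain a new newform~${ g_p }$ of level with the factor of~${ p }$ removed and the same mod-${\lambda}$ representation (with~${ \lambda }$ replaced by a suitable prime~${ \lambda' }$ above~${ l }$). One must check that the character~${ \varepsilon }$ is preserved throughout this process; this is automatic because~${ \varepsilon }$ has conductor~${ 15 }$ and therefore is already defined modulo a divisor of~${ N_0 }$, so stripping primes outside~${ 30 }$ cannot affect it. After exhausting all~${ p \mid \rad_{30} c }$ we arrive at a newform~${ g }$ of one of the three stated levels~${ 23040 }$,~${ 11520 }$, or~${ 15360 }$, with character~${ \varepsilon }$ and with~${ \overline{\rho}_{g, \lambda'} \sim \overline{\rho}_{A_{i,j}, \lambda} }$ for some prime~${ \lambda' \mid l }$.

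The one point requiring care is the bookkeeping of coefficient fields and of the prime~${ \lambda }$ as one passes between newforms under repeated level lowering: each application of Diamond's theorem may change the coefficient field of the newform, and the prime above~${ l }$ may change accordingly, but since the conclusion only demands the existence of some~${ \lambda' \mid l }$ in the coefficient field of the final~${ g }$, this is harmless. Everything else in the argument—the numerical computation of the levels~${ N_0 }$, the invariance of~${ \varepsilon }$, and the finite flat criterion at~${ p = l }$—is immediate from the results already proved, so the only conceptual ingredient beyond the cited preliminaries is the correct invocation of Diamond's theorem at each intermediate stage.
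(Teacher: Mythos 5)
Your argument correctly identifies the hypotheses (irreducibility from Corollary~\ref{thm:irreducibleNewform}, finiteness from Proposition~\ref{thm:unramified}) and the target levels, and the bookkeeping of coefficient fields changing under level lowering is a reasonable point to flag. However, there is a genuine gap: you treat the case~${ p = l }$ (when~${ l \mid c }$, so that~${ l }$ divides~${ \rad_{30} c }$ and hence appears in the level of~${ f }$) as if Diamond's Theorem~4.1 in \cite{DiamondLL} handles it on the strength of finite flatness at~${ l }$. It does not: as used in the paper, that theorem only removes primes~${ p }$ which appear to the first power in the level, do not divide~${ l }$, do not divide the conductor of the character, and at which the representation is unramified. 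The prime~${ l }$ itself is explicitly excluded. The paper's proof handles this by a second, separate ingredient, namely Theorem~2.1 in \cite{RibetLL3}, which removes all powers of~${ l }$ from the level and keeps the weight at~${ 2 }$ precisely because the representation is finite at~${ l }$ and~${ l > 5 }$. Without invoking (something equivalent to) that second result, your proof does not reach the stated levels when~${ l \mid c }$.

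A smaller stylistic difference: you propose to strip the primes of~${ \rad_{30} c }$ one at a time by iterating Diamond's theorem, whereas the paper applies the theorem once to remove all such primes simultaneously. The one-shot application is cleaner because it sidesteps the accumulating changes of coefficient field you were careful to track; your iterated version is not wrong in principle, but the justification that each intermediate newform still satisfies the hypotheses would need to be spelled out, and it buys nothing over the direct application.
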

\begin{proof}
  We start by picking some~${ f }$ as in
  Theorem~\ref{thm:QcurveModularity}. Let~${ A_{i, j} }$ be the
  corresponding factor. We already have that
  that~${ \overline{\rho}_{ A_{i, j}, \lambda } \sim
    \overline{\rho}_{f, \lambda} }$ for an arbitrary
  prime~${ \lambda \mid l}$. We will show that we can find a newform
  of the level as in this theorem which still has an isomorphic Galois
  representation.

  Note that~${ \overline{\rho}_{f, \lambda} }$ is irreducible by
  Corollary~\ref{thm:irreducibleNewform} and odd as it is the Galois
  representation of a newform.

  We apply Theorem~4.1 in \cite{DiamondLL}, which tells us we can find
  a newform~${ g }$ of weight 2 with an isomorphic Galois
  representation~${ \overline{\rho}_{g, \lambda} }$. The level
  of~${ g }$ is the level of~${ f }$ divided by all prime
  numbers~${ p }$ that appear only once in the level, do not
  divide~${ l }$ or the conductor of the character of~${ f }$, and at
  which the Galois representation is unramified. The levels and the
  explicit character in Proposition~\ref{thm:QcurveModularity} and the
  result from Proposition~\ref{thm:unramified} tell us that all those
  prime numbers~${ p }$ not dividing~${ 30 l }$ satisfy these
  conditions and can thus be removed from the level.

  Lastly we use Theorem~2.1 in \cite{RibetLL3} which shows that the
  same result holds for a newform~${ g }$ of weight 2 and a level in
  which all powers of~${ l }$ are also removed. The weight
  remains~${ 2 }$ as the Galois representation is finite at~${ l }$ by
  Proposition~\ref{thm:unramified} and the fact that~${ l > 5 }$. The
  resulting level is the one given in this theorem.
\end{proof}
\begin{remark}
  Note that the newforms in Theorem~\ref{thm:rho_modular} are in fact
  those that have the Serre level and weight of the corresponding
  irreducible representation~${ \overline{\rho}_{A_{i,j}, \lambda}
  }$. The result of this theorem would therefore also directly follow
  from the Serre conjectures.
\end{remark}


\subsection{Newform elimination}
\label{sec:elimination}

The strategy to complete the proof of Theorem~\ref{thm:main} is to
show that the conclusion of Theorem~\ref{thm:rho_modular} will give a
contradiction, implying that the implicit assumption of a
solution~${ (a, b, c) }$ to equation~\eqref{eq:main} existing
with~${ l > 5 }$ and~${ \gcd(a, b) = 1 }$ must be false. We derive
this contradiction by comparing traces of Frobenius
of~${ \overline{\rho}_{ E_{i, \gamma} , l} : G_{K_\beta} \to
  \GL_2(\F_l) }$
and~${ \overline{\rho}_{g, \lambda' } : G_\Q \to \GL_2(\F_\lambda) }$
for~${ g }$ in one of the given spaces. Note that since both
representations are defined over different Galois groups, we need a
small result.

\begin{lemma}
  \label{thm:frob_relations}
  We have
  that~${ \overline{\rho}_{ g, \lambda' } \left( \frob_{\ideal{p}}
    \right) \sim \overline{\rho}_{ g, \lambda' } \left( \frob_p^d
    \right) }$ for any prime~${ \ideal{p} }$ of~${ K_{\beta} }$ of
  characteristic~${ p \nmid 30 \, l }$ and a residue field of
  degree~${ d }$. Here~${ \sim }$ denotes the two are conjugates.
\end{lemma}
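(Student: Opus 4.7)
The plan is to reduce the statement to pure Galois theory, exploiting unramifiedness of both $K_\beta/\Q$ and of the representation at primes $p \nmid 30\,l$.

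First I would check that $K_\beta/\Q$ is unramified at $p$. Since $K_\beta = \Q(\sqrt{6}, \zeta_{15}+\zeta_{15}^{-1})$ is contained in $\Q(\sqrt{6},\zeta_{15})$, its discriminant is supported on $\{2,3,5\}$, so any $p \nmid 30$ is unramified in $K_\beta$. Fixing a prime $\overline{\ideal{p}}$ of $\overline{\Q}$ above $\ideal{p}$, the inertia group $I_\ideal{p} \subset G_{K_\beta}$ then coincides with the inertia group $I_p \subset G_\Q$ taken at the same $\overline{\ideal{p}}$, and $D_\ideal{p} \subset D_p$ as subgroups of $G_\Q$.

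Next I would compare the two Frobenius elements directly via their action on the residue field. Any lift $\sigma_\ideal{p} \in G_{K_\beta}$ of $\frob_\ideal{p}$ acts on $\overline{\F_p}$ as $x \mapsto x^{p^d}$, because the residue field of $\ideal{p}$ has $p^d$ elements; any lift $\sigma_p \in G_\Q$ of $\frob_p$ acts as $x \mapsto x^p$. Therefore $\sigma_\ideal{p}$ and $\sigma_p^d$ act identically on $\overline{\F_p}$, which means $\sigma_\ideal{p}\,\sigma_p^{-d} \in I_p = I_\ideal{p}$.

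Finally, the levels appearing in Theorem~\ref{thm:rho_modular} have prime support contained in $\{2,3,5\}$, so by the standard theory of mod~$\lambda'$ representations of weight 2 newforms the representation $\overline{\rho}_{g,\lambda'}$ is unramified at every rational prime $p \nmid 30\,l$; in particular $\overline{\rho}_{g,\lambda'}(I_p) = 1$. Applying $\overline{\rho}_{g,\lambda'}$ to the relation above then yields $\overline{\rho}_{g,\lambda'}(\sigma_\ideal{p}) = \overline{\rho}_{g,\lambda'}(\sigma_p)^d$, and the ambiguity in the choices of $\overline{\ideal{p}}$ and of the Frobenius lifts corresponds precisely to conjugation inside $\GL_2(\F_{\lambda'})$, giving the stated conjugacy. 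The argument is essentially bookkeeping of decomposition and inertia groups combined with the two unramifiedness facts, and presents no real obstacle.
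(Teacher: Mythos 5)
Your proof is correct and follows essentially the same approach as the paper's: compare the two Frobenius elements through their common image in $G_{\F_p}$, conclude their quotient lies in the inertia subgroup $I_p$, and invoke the unramifiedness of $\overline{\rho}_{g,\lambda'}$ at $p \nmid 30\,l$ to finish. Your preliminary remark that $K_\beta/\Q$ is unramified at $p$ (and hence $I_{\ideal{p}} = I_p$) is true but not actually needed — what matters is only that the quotient $\sigma_{\ideal{p}}\sigma_p^{-d}$ lands in $I_p$, which holds regardless of ramification of $K_\beta$ — and the paper correspondingly cites its Proposition~\ref{thm:unramified} for unramifiedness of the representation rather than re-deriving it from the level.
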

\begin{proof}
  Let~${ \ideal{ p } }$ be an arbitrary prime of~${ K_{\beta} }$ of
  characteristic~${ p \nmid 30 l }$. Note that a Frobenius
  element~${ \frob_{ \ideal{p} } \in G_{K_{\text{dec}, \ideal{p}}} }$
  maps to the homomorphism~${ x \mapsto x^{\# \F_{\ideal{p}} } }$
  inside~${ G_{\F_p} }$, just as does~${ \frob_p^d }$ for a Frobenius
  element~${ \frob_p \in G_{\Q_p}}$. This means their difference lies
  in the ramification subgroup of~${ G_{\Q_p} }$.
  Since~${ \overline{\rho}_{g, \lambda'} }$ is unramified at~${ p }$
  by Proposition~\ref{thm:unramified} we find
  that~${ \overline{\rho}_{ g, \lambda' } \left( \frob_{\ideal{p}}
    \right) \sim \overline{\rho}_{ g, \lambda' } \left( \frob_p^d
    \right) }$.
\end{proof}

Now the rest of the proof becomes a computation.

First we compute the newforms in the spaces mentioned in
Proposition~\ref{thm:rho_modular}. These computations take quite some
time, especially the computation for the newforms of
level~${ 15360 }$, which took approximately 5 days of computation time
in MAGMA~\cite{magma} using a desktop computer (Intel core i5-6600
CPU, 3.3 GHz). For comparison computing the space of newforms of
level~${ 11520 }$ took just under 8 minutes on the same machine and
the space of newforms of level~${ 23040 }$ took just over an hour. For
this reason all newforms were pre-computed and then stored by saving
the Fourier coefficients for all primes smaller than 500, as this data
is sufficient to compute the sought traces of Frobenius for those
primes.

The table below gives some general data about each space of
newforms. It lists from left to right the level of the newforms, the
dimension of the corresponding newspace, the number of Galois
conjugacy classes of newforms, the possible sizes of the Galois
conjugacy classes, and the total number of newforms among all
conjugacy classes. Note that the last is always twice the dimension
mentioned before, since the Galois conjugacy class of the character
consists of two characters.

\begin{table}[h]
  \centering
  \begin{tabular}{c|c|c|c|c}
    level & dim. & \# conj. classes & size of conj. classes & \# newforms \\
    \hline
    ${ 11520 }$ & ${ 192 }$ & ${ 30 }$ & ${ 4, 8, 16, 24, 32, 48 }$ & ${ 384 }$ \\
    ${ 23040 }$ & ${ 384 }$ & ${ 20 }$ & ${ 8, 40, 48 }$ & ${ 768 }$ \\
    ${ 15360 }$ & ${ 752 }$ & ${ 14 }$ & ${ 16, 64, 80, 96, 128, 176, 192 }$ & ${ 1504 }$
  \end{tabular}
  \caption{Data of the computed newforms}
  \label{tab:newformData}
\end{table}

For some small primes~${ \ideal{p} \nmid 30 l }$ of~${ K_{\beta} }$ we
compute the set of possible values
of~${ \tr \overline{\rho}_{E_{i, \gamma}, l}( \frob_{\ideal{p}} )
}$. A standard result is that
\begin{equation*}
  \tr \overline{\rho}_{E_{i, \gamma}, l}( \frob_{\ideal{p}} ) =
  \begin{cases}
    \# \F_{\ideal{p}} + 1 - \# E_{i, \gamma}( \F_{\ideal{p}} ) \text{ mod } l & \\
    \# \F_{\ideal{p}} + 1 \text{ mod } l & \\
    - \# \F_{\ideal{p}} - 1 \text{ mod } l &
  \end{cases}
\end{equation*}
where the cases correspond to~${ E }$ having good, split
multiplicative and non-split multiplicative reduction
at~${ \ideal{ p } }$ respectively. Note that the right hand side is
each time the reduction of an integer that does not depend on~${ l }$,
but does depend on the chosen solution~${ (a, b, c) }$ used to
form~${ E_{i, \gamma} }$. In fact it only depends on the value
of~${ a }$ and~${ b }$ modulo~${ p }$, the characteristic
of~${ \ideal{p} }$. Ignoring the
case~${ a \equiv b \equiv 0 }$~(mod~${ p }$) as~${ a }$ and~${ b }$
are coprime, we compute the set~${ S_{i, \ideal{p}} }$ of possible
integers on the right hand side from all possible values of~${ a }$
and~${ b }$ modulo~${ p }$. The reduction type in these cases can be
computed using the version of Tate's algorithm implemented in the
code~\cite{code}.

On the other hand we compute the values
of~${ \tr \overline{\rho}_{g, \lambda'}( \frob_p^d ) }$ for each
newform~${ g }$ found before, where~${ p }$ is the characteristic
of~${ \ideal{p} }$,~${ d = [ \F_{\ideal{p}} : \F_p ] }$,
and~${ \lambda' \mid l }$ is the prime ideal corresponding to a
fixed~${ \lambda \mid l }$ in Proposition~\ref{thm:rho_modular}. This
trace can be computed
from~${ \tr \overline{\rho}_{g, \lambda'}( \frob_p ) }$
and~${ \det \overline{\rho}_{g, \lambda'}( \frob_p ) }$ by the fact
that for a 2-by-2 matrix~${ A }$ the value of~${ \tr A^d }$ can be
expressed as a polynomial in~${ \tr A }$ and~${ \det A }$.
Since~${ p }$ does not divide the level we have
\begin{IEEEeqnarray*}{rCl}
  \tr \overline{\rho}_{g, \lambda'}( \frob_p ) & = & a_p( g ) \text{ mod } \lambda' \\
  \det \overline{\rho}_{g, \lambda'}( \frob_p ) & = & \varepsilon(p) p \text{ mod } \lambda',
\end{IEEEeqnarray*}
where~${ a_p(g) }$ is the~${ p }$-th coefficient in the Fourier
expansion of~${ g }$ and~${ \varepsilon }$ is the character
of~${ g }$. Note again that the right hand side for both these values
is the reduction of an algebraic integer that is independent
of~${ \lambda' }$. Using these algebraic integers in the formula
for~${ \tr A^d }$ we get an algebraic integer~${ t_{g, \ideal{p}} }$
independent of~${ \lambda' }$ of which the reduction mod~${ \lambda' }$
is the value of~${ \tr \overline{\rho}_{g, \lambda'}( \frob_p^d ) }$.

Now for each newform~${ g }$ with corresponding
curve~${ E_{i, \gamma} }$ and a prime ideal~${ \ideal{p} \nmid 30 }$ we
can compute the integer
\begin{equation*}
  M_{ g, \ideal{p} } = p \, \prod_{s \in S_{i, \ideal{p}}} \curl{N} (s - t_{g, \ideal{p}}),
\end{equation*}
where~${ \curl{N} }$ denotes the norm of the appropriate number field.
By Theorem~\ref{thm:rho_modular} and Lemma~\ref{thm:frob_relations}
this algebraic integer is divisible by~${ l }$ since
either~${ l = p }$ or one of the factors~${ s - t_{g, \ideal{p}} }$
reduces to
\begin{equation*}
  \tr \overline{\rho}_{E_i, l}( \frob_{\ideal{p}} ) - \tr
  \overline{\rho}_{g, \lambda'} ( \frob_p^d ) = 0
\end{equation*}
modulo~${ \lambda' }$. Therefore if~${ g }$ would be the newform
obtained in Theorem~\ref{thm:rho_modular} corresponding
to~${ E_{i, \gamma} }$, then~${ l > 5 }$ should divide the norm
of~${ M_{\ideal{p}} }$.

We now apply this result. For each newform~${ g }$ in the spaces of
Theorem~\ref{thm:rho_modular} we compute the integer
\begin{equation*}
  M_g = \gcd \{ M_{g, \ideal{p}} : \ideal{p} \nmid 30 \text{ of characteristic } p < 40 \}.
\end{equation*}
Furthermore we remove all factors~${ 2 }$,~${ 3 }$ and~${ 5 }$
from~${ M_g }$, giving us a number divisible only by all~${ l > 5 }$
for which the newform~${ g }$ can still be the one in
Theorem~\ref{thm:rho_modular}. We eliminate all newforms for
which~${ M_g = 1 }$ which leaves us with 14 newforms of level 11520,
12 newforms of level 23040 and 7 newforms of level 15360.

The last step is to use both Frey curves simultaneously. This is known
as a multi-Frey approach and was also used in
\cite{DieulefaitFreitas}, \cite{BennettChen} and \cite{Chen}. Instead
of computing the sets~${ S_{1, \ideal{p}} }$
and~${ S_{2, \ideal{p} } }$ independently we now compute one
set~${ S_{\ideal{p}} \subset \Z^2}$. This set contains for each
congruence class of~${ (a, b) }$ modulo~${ p }$ a
pair~${ (s_1, s_2) }$ such that~${ s_i }$ is the integer that reduces
to~${ \tr \overline{\rho}_{E_{i, \gamma}, l}( \frob_{\ideal{p}} ) }$
for that congruence class.

Now for each prime ideal~${ \ideal{p} \nmid 30 }$ and each pair of
newforms~${ (g_1, g_2) }$ remaining with~${ g_i }$ corresponding
to~${ E_{i, \gamma} }$ we can compute
\begin{equation*}
  M_{g_1, g_2, \ideal{p}} = p \prod_{(s_1, s_2) \in S_{\ideal{p}}}
  \gcd(\curl{N} (s_1 - t_{g_1, \ideal{p}}), \curl{N} (s_2 - t_{g_2, \ideal{p}})),
\end{equation*}
where~${ \curl{N} }$ again denotes the norm in the appropriate
field. As before this integer is divisible by all~${ l > 5 }$ for
which~${ g_1 }$ and~${ g_2 }$ are newforms for which
Theorem~\ref{thm:rho_modular} holds.

We compute
\begin{equation*}
  M_{g_1, g_2} = \gcd \{ \curl{N} M_{g_1, g_2, \ideal{p}} :
  \ideal{p} \nmid 30 \text{ of characteristic } p < 50 \},
\end{equation*}
for each pair of newforms~${ (g_1, g_2) }$ and see that none of them
are divisible by primes~${ l > 5 }$. If a solution~${ (a, b, c) }$
with~${ \gcd(a, b) = 1 }$ to equation~\eqref{eq:main} would exist
for~${ l > 5 }$, then this would contradict
Theorem~\ref{thm:rho_modular}. Therefore no such solution to
equation~\eqref{eq:main} can exist, proving Theorem~\ref{thm:main}
for~${ l > 5 }$ prime.

\begin{remark}
  Most prime exponents~${ l > 5 }$ can already be eliminated by only
  looking at the curve~${ E_{ 2, \gamma } }$ at more primes then
  considered here and using more restrictions on~${ a }$ and~${ b
  }$. However it seems impossible to eliminate the case~${ l = 7 }$ in
  this way, hence the use of the multi-Frey curve approach.
\end{remark}



\bibliographystyle{plain}
\bibliography{previous,technical,code}

\end{document}